\patchcmd{\subsection}{-.5em}{.5em}{}{}
\patchcmd{\subsubsection}{-.5em}{.5em}{}{}
\newtheorem{theorem}{Theorem}[section]
\newtheorem{lemma}[theorem]{Lemma}
\newtheorem{proposition}[theorem]{Proposition}
\newtheorem{corollary}[theorem]{Corollary}
\newtheorem{claim}{Claim}[theorem]
\newtheorem*{genericthm*}{\thistheoremname}
\newenvironment{namedthm*}[1]
  {\renewcommand{\thistheoremname}{#1}
   \begin{genericthm*}}
  {\end{genericthm*}}
\theoremstyle{definition}
\newtheorem{definition}[theorem]{Definition}
\newtheorem{example}[theorem]{Example}
\theoremstyle{remark}
\newtheorem{remark}[theorem]{Remark}
\newcommand{\bs}{\boldsymbol}
\newcommand\numberthis{\addtocounter{equation}{1}\tag{\theequation}}
\newcommand{\bR}{\mathbb{R}}
\newcommand{\bx}{B}
\newcommand{\cube}{\mathcal{C}}
\newcommand{\good}{(C,\alpha){\text -} good}
\newcommand{\vx}{\boldsymbol{x}}
\newcommand{\vy}{\boldsymbol{y}}
\newcommand{\valpha}{\boldsymbol{\alpha}}
\newcommand{\classP}{\mathcal{P}_l(\mathbb{R}^k,G)}
\newcommand{\prm}{^{\prime}}
\numberwithin{equation}{section}
\title[Locally unipotent invariant measures] 
      {Locally unipotent invariant measures and limit distribution of a sequence of polynomial trajectories on homogeneous spaces} 
\author[Han Zhang]{}
\subjclass{Primary: 22E40, 22D40;}
 \keywords{Homogeneous dynamics, unipotent invariance, Ratner's Theorem, polynomial trajectory, limit distribution}
 \email{hanzhang3906@tsinghua.edu.cn}
\begin{document}
\maketitle

\centerline{\scshape Han Zhang}
\medskip
{\footnotesize
 \centerline{Yau Mathematical Sciences Center}
   \centerline{Tsinghua University}
   \centerline{Beijing, 100084, China}
} 

\bigskip


\begin{abstract}
Let $G$ be a Lie group and $\Gamma$ be a lattice in $G$. We introduce the notion of locally unipotent invariant measures on $G/\Gamma$. We then prove that under some conditions, the limit measure supported on the image of polynomial trajectories on $G/\Gamma$ is locally unipotent invariant, thus give a partial answer to an equidistribution problem for higher dimensional polynomial trajectories on homogeneous spaces, which was raised by Shah in \cite{shah1994limit}.

The proof relies on Ratner's measure classification theorem, linearization technique for polynomial trajectories near singular sets and a twisting technique of Shah.
\end{abstract}


\section{Introduction}
Let $G$ be a Lie group and $\Gamma$ be a lattice in $G$, that is, a discrete subgroup of $G$ such that $X=G/\Gamma$ admits a $G$-invariant probability measure. Let $U=\{u(t):t\in\bR\}$ be a one-parameter unipotent subgroup of $G$. Ratner's uniform distribution theorem \cite{ratner1991raghunathanduke} asserts that for any $x\in G/\Gamma$, there exists a closed connected subgroup $F$ of $G$ containing $U$ such that $Fx$ is closed and admit an $F-$invariant probability measure $\mu_F$ so that $Ux$ is uniformly distributed on $Fx$ with respect to $\mu_F$, i.e., for any $f\in C_c(X)$,
\begin{align*}
    \lim_{T\to \infty}\frac{1}{T}\int_0^T f(u(t)x)dt=\int_{Fx}f d\mu_{F}.
\end{align*}

The key property of unipotent subgroup used in the proof of Ratner's uniform distribution theorem is that the map $t \mapsto Ad u(t)$ is a polynomial function in each coordinate of $End(Lie(G))$, where $Ad$ is the adjoint representation of $G$ on its Lie algebra. The property of polynomial growth rate enables Shah to prove a limit distribution result for polynomial trajectories in the following setting \cite{shah1994limit}.

Let $G$ be a real algebraic group, that is, $G$ is an open subgroup of $\bR$-points of an algebraic group $\bs{G}$ defined over $\bR$. A map $\Theta:\bR^k \rightarrow G$ is called regular algebraic if it is the restriction of a morphism $\Theta: \mathbb{C}^k \rightarrow \bs{G}$ of algebraic varieties defined over $\bR$ (cf. \cite[\S 1]{shah1994limit}). If $G$ is a subgroup of $SL_N(\bR)$ for some $N\in \mathbb{N}$, then each matrix coordinate of $\Theta$ is a polynomial in $\bR^k$.

We say that $\Theta$ is of \textit{product type} if we can write
\begin{align}\label{product type}
    \Theta(\vx)=\Theta_k(x_k)\cdots\Theta_1(x_1),\forall \vx=(x_1,\cdots,x_k)\in \bR^k,
\end{align}
where $\Theta_i:\bR \to G$ is a one-dimensional regular algebraic map for $1\leq i\leq k$. The main theorem  in \cite{shah1994limit} asserts the following:

\begin{theorem}(Shah)\label{Main theorem of Shah}
Let $G$ be a real algebraic group. Let $\Gamma\subset G_1\subset G$ be closed subgroups of $G$ such that $G_1/\Gamma$ admits a $G_1$-invariant probability measure. Assume that $\Theta:\bR^k\to G_1$ is a regular algebraic map of \textbf{product type} and $\Theta(\boldsymbol{0})=e$. Then there exists a smallest closed subgroup F of $G_1$ containing $\Theta(\bR^k)$ such that the orbit $F\Gamma/\Gamma$ is closed in $G/\Gamma$ and admits an $F$-invariant probability measure $\mu_F$ such that the following holds: given any sequences $T_n^{(1)}\to \infty,\cdots,T_n^{(k)}\to \infty$ as $n\to \infty$, for the boxes $\bx_n=[0,T_n^{(1)}]\times\cdots \times [0,T_n^{(k)}]$, we have for any $f\in C_c(G/\Gamma)$,
\begin{align}\label{limit distribution of product type}
    \lim_{n\to \infty} \frac{1}{|B_n|}\int_{\vx\in B_n} f(\Theta(\vx)\Gamma)d\vx=\int_{F\Gamma/\Gamma}f d\mu_F,
\end{align}
where $|\cdot|$ denotes the Lebesgue measure on $\bR^k$.
\end{theorem}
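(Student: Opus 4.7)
The plan is to analyze weak-$*$ limits of the sequence of probability measures $\mu_n := \tfrac{1}{|\bx_n|}\int_{\bx_n} \delta_{\Theta(\vx)\Gamma}\, d\vx$ on $G/\Gamma$, with the goal of showing that every such limit equals $\mu_F$ for the smallest closed $F\supset \Theta(\bR^k)$ with $F\Gamma$ closed and of finite volume. The first step is tightness: embedding $G\hookrightarrow SL_N(\bR)$ makes each entry of $\Theta(\vx)$ a polynomial on $\bR^k$, and the Dani--Margulis--Kleinbock--Margulis quantitative non-divergence estimates, adapted to polynomial maps in $k$ variables integrated over expanding boxes, imply that no mass of $\mu_n$ escapes to infinity in $G_1\Gamma/\Gamma$. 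Hence every weak-$*$ limit $\mu$ is a probability measure supported on this closed subset.

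I would next proceed by induction on $k$, the base case $k=1$ being Shah's one-dimensional polynomial equidistribution theorem. For the inductive step, the product-type factorization \eqref{product type} separates the last coordinate:
\begin{align*}
    \frac{1}{|\bx_n|}\int_{\bx_n} f(\Theta(\vx)\Gamma)\,d\vx = \frac{1}{T_n^{(k)}}\int_0^{T_n^{(k)}}\!\!\left( \frac{1}{|\bx_n'|}\int_{\bx_n'} f\bigl(\Theta_k(x_k)\Theta'(\vx')\Gamma\bigr)\,d\vx' \right) dx_k,
\end{align*}
with $\Theta' = \Theta_{k-1}\cdots\Theta_1$ and $\bx_n' = \prod_{i=1}^{k-1}[0,T_n^{(i)}]$. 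For each fixed $x_k$, applying the inductive hypothesis to $y \mapsto f(\Theta_k(x_k)y)$ identifies the pointwise limit of the inner average with $\int f(\Theta_k(x_k)y)\,d\mu_{F'}(y)$, for the closed orbit $F'\Gamma$ produced in the $(k-1)$-dimensional case (after a harmless translation to arrange the correct basepoint). A compactness argument on $f$, together with equicontinuity of $\Theta_k$ on compact sets, upgrades this to locally uniform convergence in $x_k$, so the outer average reduces to the one-dimensional equidistribution of $x_k \mapsto \Theta_k(x_k)$ against $\mu_{F'}$, again handled by the base case.

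The heart of the argument is then to show that every limit $\mu$ is invariant under a unipotent subgroup large enough to force $\mu = \mu_F$. Unipotent invariance is extracted from the polynomial nature of $\Theta$: for small $\bs{s} \in \bR^k$, the maps $\vx \mapsto \Theta(\vx+\bs{s})\Theta(\vx)^{-1}$ are polynomial in $\vx$, and suitable renormalizations along the expanding directions of $\bx_n$ converge to unipotent one-parameter subgroups preserving $\mu$. Ratner's measure classification then decomposes $\mu$ along closed orbits of subgroups in the countable collection $\mathcal{H}$ of closed subgroups of $G_1$ whose orbit through $e\Gamma$ is closed and of finite volume.

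The main obstacle, and the step occupying the bulk of the technical work, is excluding positive $\mu$-mass on the singular set $X(H,U)$ associated to any proper $H\in\mathcal{H}$ with $H\subsetneq F$. For this I would employ the linearization technique of Dani--Margulis, realizing $X(H,U)$ via a polynomial map $\vx \mapsto \Theta(\vx)\cdot v_H$ into a finite-dimensional $G$-representation $V$, where $v_H$ is a vector stabilized by $H$. Bounding the measure of $\{\vx \in \bx_n : \Theta(\vx)\cdot v_H \in K\}$ for compact $K\subset V$ requires $(C,\alpha)$-good estimates for polynomial functions on $\bR^k$ averaged over boxes---the input specific to the higher-dimensional setting, where one must control non-concentration simultaneously in all $k$ directions of $\bx_n$. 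Combined with Shah's twisting technique---perturbing $\Theta(\vx)$ by elements commuting with a portion of $U$ to enlarge the invariance group iteratively---the singular sets are excluded and $\mu$ is forced to be $F$-invariant, hence $\mu = \mu_F$ by uniqueness, proving \eqref{limit distribution of product type}.
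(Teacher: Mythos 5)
First, note that the paper itself does not prove this statement: Theorem \ref{Main theorem of Shah} is quoted verbatim from \cite[Theorem 1.1]{shah1994limit}, so the only comparison available is with Shah's original argument, whose architecture (nondivergence, unipotent invariance of the limit measure coming from the outermost factor $\Theta_k$, Ratner's classification, linearization, and induction on $\dim(G/\Gamma)$) is the one this paper later adapts. Your final two paragraphs gesture at exactly that architecture, and your nondivergence step is correct and matches Theorem \ref{Nondivergence}.

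The genuine gap is in the step you present as the core reduction, the induction on $k$ via iterated limits. After writing the average over $\bx_n$ as an outer average over $x_k\in[0,T_n^{(k)}]$ of inner averages over $\bx_n'$, you apply the inductive hypothesis to $y\mapsto f(\Theta_k(x_k)y)$ for each \emph{fixed} $x_k$ and then claim that ``locally uniform convergence in $x_k$'' lets you pass to the limit in the outer integral. This does not work: the outer variable ranges over the expanding interval $[0,T_n^{(k)}]$, so any fixed compact set of $x_k$ occupies a vanishing proportion of the range, and the rate at which the inner average converges is not uniform as $x_k\to\infty$ (the translated test functions $f(\Theta_k(x_k)\cdot)$ leave every compact family). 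Without uniformity over essentially all of $[0,T_n^{(k)}]$ the interchange of limits is unjustified, and this is precisely why the product-type theorem is not a formal consequence of the one-dimensional case. Moreover, even if the interchange were granted, the resulting outer average is $\frac{1}{T_n^{(k)}}\int_0^{T_n^{(k)}}\int f(\Theta_k(x_k)y)\,d\mu_{F'}(y)\,dx_k$, i.e.\ an equidistribution statement for the translated homogeneous measures $(\Theta_k(x_k))_*\mu_{F'}$; this is \emph{not} the $k=1$ base case, which concerns point trajectories $\Theta_k(x_k)\Gamma$, and it requires its own nontrivial argument (again via unipotent invariance, Ratner and linearization). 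The correct route, which Shah takes and which your closing paragraphs approximate, is to work directly with a weak-$*$ limit $\mu$ of the full box averages, derive invariance of $\mu$ under a nontrivial unipotent one-parameter subgroup using only the outermost one-variable factor $\Theta_k$ (an analogue of Lemma \ref{local unipotent invariance} with $k=1$; no twisting is needed in the product-type case), and then run Ratner plus the linearization dichotomy with induction on $\dim(G/\Gamma)$ rather than on $k$. As written, your proposal's main reduction would fail, so the proof is incomplete.
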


Theorem \ref{Main theorem of Shah} has many applications, for example see \cite{shah2000counting}\cite{shah2018expanding}. In \cite{shah1994limit}, a natural question was raised about the limit distribution of $\Theta$ \textit{without} product type assumption. Shah answered this question under the assumption that the averaging is taken on regular balls $B_n$ centered at $\boldsymbol{0}$ of $\bR^k$ (cf. \cite[Corollary 1.2]{shah1994limit}). When the averaging is taken on arbitrary expanding boxes, that is, $B_n$ is an arbitrary box as in Theorem \ref{Main theorem of Shah}, the question remains open.

The assumption that $\Theta$ is of product type make it possible for one to study the limit distribution of $\Theta$ using the result on one-dimensional polynomial trajectories on homogeneous spaces. Without product type assumption, we are forced to deal with the higher dimensional polynomial trajectories. Because additional dimensions introduce complexity, we put some regularity conditions on the boxes $B_n$ and the map $\Theta$. The following is our main result.

\begin{theorem}\label{main theorem}
Let $G$ be a real algebraic group. Let $\Gamma\subset G_1\subset G$ be closed subgroups such that $G_1/\Gamma$ admits a $G_1$-invariant probability measure. Let $\Theta:\bR^k \to G_1$ be a nonconstant regular algebraic map \textbf{without} product type assumption. Assume that $\Theta(\bs{0})=e$. Let $F$ be the smallest closed subgroup of $G_1$ containing $\Theta(\mathbb{R}^k)$ such that the orbit $F\Gamma/\Gamma$ is closed. Further, assume that $F$ is semisimple and algebraic. Then $F\Gamma/\Gamma$ admits an $F$-invariant probability measure $\mu_F$ such that the following holds: given any sequence $T_n \to \infty$ as $n\to \infty$ and $\lambda_i>0$ for $i=1,\cdots,k$, for the boxes $B_n=[0,T_n^{\lambda_1}]\times\cdots\times[0,T_n^{\lambda_k}]$,  
\begin{align}\label{equation:limit measure of measures supported on Bn}
     \lim_{n\to \infty} \frac{1}{|B_n|}\int_{\vx\in B_n} f(\Theta(\vx)\Gamma)d\vx=\int_{F\Gamma/\Gamma} f d\mu_F,
\end{align}
for any $f\in C_c(G/\Gamma)$.
\end{theorem}

\begin{remark}
By \cite[Proposition 2.1]{shah1994limit}, the smallest closed subgroup $F$ of $G$ containing $\Theta(\bR^k)$ is generated by algebraic one-parameter unipotent subgroups, thus $F$ is an algebraic group. By \cite[Lemma 2.9]{shah1991uniformly}, $F$ admits a Levi decomposition $F=L\cdot N$, where $L$ is a semisimple algebraic group without compact factors and $N$ is the unipotent radical of $F$. In the above theorem, we assume that $N$ is trivial and $F\Gamma/\Gamma$ is closed.
\end{remark}

Here we provide an easy example where our main Theorem \ref{main theorem} applies but Theorem \ref{Main theorem of Shah} does not.
\begin{example}
Let $G=SL_2(\bR)$, $\Gamma=SL_2(\mathbb{Z})$ and $\mu_G$ be the $G$-invariant probability measure on $G/\Gamma$. Consider a regular algebraic map $\Theta:\bR^k \to G$ for some $k\geq 1$ by
\begin{align*}
    \Theta(\vx)=\begin{pmatrix}
    \Theta_{11}(\vx) & \Theta_{12}(\vx)\\
    \Theta_{21}(\vx) & \Theta_{22}(\vx)
    \end{pmatrix}, \forall \vx\in \bR^k.
\end{align*}
Assume that none of $\Theta_{ij}$ is constant for $1\leq i,j\leq 2$. Then it can be verified that there is no algebraic subgroup $F$ of $G$ containing $\Theta(\bR^k)$. Theorem \ref{main theorem} implies that for any sequence $T_n\to \infty$ as $n\to \infty$ and any positive $\lambda_i$ for $1\leq i\leq k$, we have 
\begin{align*}
      \lim_{n\to \infty} \frac{1}{|B_n|}\int_{\vx\in B_n} f(\Theta(\vx)\Gamma)d\vx=\int_{G/\Gamma} f d\mu_G,
\end{align*}
for any $f\in C_c(G/\Gamma)$, where $B_n=[0,T_n^{\lambda_1}]\times\cdots\times[0,T_n^{\lambda_k}]$.
\end{example}

One of the main difficulties of proving the limit distribution result in Theorem \ref{main theorem} is to obtain unipotent invariance property of limit measure. Unlike \cite[Section 4]{shah1994limit}, we are not able to get unipotent invariance without product type assumption of $\Theta$.

The proof of Theorem \ref{main theorem} relies on the fact that the limit measure is locally unipotent invariant, which we will give precise definition in section \ref{section: locally unipotent invariance}. Roughly speaking, a probability measure $\mu$ on $G/\Gamma$ is locally unipotent invariant if there exists a sequence of probability measures $\{\mu_i\}$ converging to $\mu$ such that one can find another sequence of probability measures $\{\nu_i\}$ converging to a probability measure $\nu$ and the following hold: (1) each $\nu_i$ is a suitable perturbation of $\mu_i$; (2) $\nu$ is invariant under a nontrivial unipotent subgroup of $G$. Therefore, a locally unipotent invariant measure $\mu$ is invariant under a nontrivial unipotent subgroup up to small perturbations. The twisting technique of Shah (cf. \cite{shah2009equidistribution}) enables one to study locally unipotent invariant measures by Ratner's theorems.

In the setting of Theorem \ref{main theorem}, given a sequence $T_n\to \infty$ as $n\to \infty$ and positive real numbers $\lambda_i$ for $i=1,\cdots,k$, we can write
\begin{align}\label{rescale boxes}
    B_n=\{(\alpha_1 T_n^{\lambda_1},\cdots,\alpha_k T_n^{\lambda_k}): \valpha= (\alpha_1,\cdots,\alpha_k)\in [0,1]^k\}.
\end{align}
Given a regular algebraic map $\Theta: \mathbb{R}^k \rightarrow G$ such that $\Theta(\boldsymbol{0})=e$, we define a map $\theta: [0,1]^k\times (0,\infty) \rightarrow G$ as follows: 
\begin{align}\label{definition of rescaling map}
    \theta(\valpha, t)=\Theta(\alpha_1 t^{\lambda_1},\cdots,\alpha_k t^{\lambda_k}),
\end{align}
where $\valpha\in [0,1]^k$ and $t\in (0,\infty)$.

By the above change of variables, to study the limit distribution of $\Theta$ on $B_n$ as $n\to \infty$, it is equivalent to study the limit distribution of the sequence of polynomial trajectories $\{\theta_n=\theta(\cdot,T_n)\}_{n\in\mathbb{N}}$ on $[0,1]^k$ as $n\to \infty$.

Let $J$ be a box contained in $[0,1]^k$. Let
\begin{align}\label{expanding subbox}
    B_{n}^J=\{(\alpha_1 T_n^{\lambda_1},\cdots,\alpha_k T_n^{\lambda_k}):\valpha= (\alpha_1,\cdots,\alpha_k)\in J\}.
\end{align}

For each $B_n^J$, we define the probability measure $\mu_n^J=\mu_{B_n}^J$ as follows: for any $f\in C_c(G/\Gamma)$
\begin{align}\label{measure over subbox}
    \int_{G/\Gamma} f d\mu_{n}^J=\frac{1}{|J|}\int_J f(\theta(\valpha, T_n))d\valpha.
\end{align}
We will show that for any nontrivial box $J\subset [0,1]^k$, the measure $\mu_n^J$ constructed above converges to a locally unipotent invariant probability measure $\mu^J$ on $G/\Gamma$. Other examples of locally unipotent invariant measures had been studied in \cite{shah2009equidistribution} and \cite{yang2020equidistribution}.

We fix some notations and conventions in this paper.

(1) By a box in $\bR^k$ we always mean a box with faces parallel to the coordinate planes, same for cubes. We denote cubes by $\mathcal{C}$. All boxes and cubes are assumed to be closed if unspecified.

(2) For vectors in $\mathbb{R}^k$, we will denote them by boldface letters $\vx$, $\valpha$, etc, to distinguish them from real numbers.

(3) We let $dist(\cdot,\cdot)$ denote the right invariant Riemannian metric on $G$. $d_X$ or $d_{G/\Gamma}$ will denote the induced metric on $G/\Gamma$.

(4) We use the absolute value symbol $|\cdot|$ to denote the usual absolute value of a real number as well as the Lebesgue measure of a measurable set in $\bR^k$. This should not cause any confusion.

The paper is organized as follows: In section \ref{Nondivergence of the limit distribution of polynomial trajectories}, we prove that the limit distribution of $\Theta$ on expanding boxes is nondivergent. In section \ref{trajectories near singular set}, we study higher dimensional polynomial trajectories near singular sets of homogeneous spaces by incorporating $\good$ property of polynomials and Besicovitch's covering theorem into linearization technique. We also give a careful treatment of twisting technique (see Corollary \ref{modification of linearization}). In section \ref{section: locally unipotent invariance}, we introduce the notion of locally unipotent invariant measures. Assuming Lemma \ref{local unipotent invariance}, we show that the limit measure of measures defined as in (\ref{equation:limit measure of measures supported on Bn}) is locally unipotent invariant. In section \ref{proof of main theorems}, we prove the main theorem by adapting the twisting technique and applying Ratner's measure classification theorem. In the last section \ref{Section Invariance under a unipotent flow}, we present the proof of Lemma \ref{local unipotent invariance}.

\section{Nondivergence of the limit distribution of polynomial trajectories}\label{Nondivergence of the limit distribution of polynomial trajectories}

Let $G$ be a Lie group. For $k,l\in \mathbb{N}$, let $\classP$ denote the set of continuous maps $\Theta:\bR^k\rightarrow G$ such that for all $\bs{a},\bs{b}\in \bR^k$ and $v\in Lie(G)$, the map 
\begin{align*}
    t\in \bR \mapsto Ad\circ \Theta(t\bs{a}+\bs{b})(v)\in Lie(G)
\end{align*}
is a polynomial of degree at most $l$ in each coordinate of $Lie(G)$.

The following is a nondivergence theorem for polynomial trajectories.
\begin{theorem}\label{Nondivergence}(\cite[Theorem 3.1]{shah1994limit}) Let G be a Lie group and $\Gamma$ be a closed subgroup such that $G/\Gamma$ admits a finite G-invariant measure. Then, given a compact set $C\subset G/\Gamma$, an $\epsilon >0$, and an $l\in \mathbb{N}$, there exists a compact subset $K\subset G/\Gamma$ with the following property: for any $g\Gamma\in G/\Gamma$ and $\Theta \in \classP$, and any bounded open convex set $B\subset \bR^k$, if $\Theta(B)g\Gamma\cap C \neq \emptyset$, then 
\begin{align*}
    \frac{1}{|B|}|\{\vx\in B: \Theta(\vx)g\Gamma\in K\}|> 1-\epsilon.
\end{align*}

\end{theorem}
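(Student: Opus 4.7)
The plan is to adapt the Dani--Margulis--Kleinbock--Margulis nondivergence strategy to polynomial trajectories on $\bR^k$, following the template Shah established for one-parameter unipotent flows. The proof rests on two pillars: a $(C,\alpha)$-good property for polynomials on convex subsets of $\bR^k$, and a Margulis-type height function on $G/\Gamma$ describing excursions into the cusp.

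First, I would establish the good property. Since for any $\Theta\in\classP$ and any $v\in Lie(G)$ the map $\vx\mapsto Ad\circ\Theta(\vx)(v)$ has matrix coordinates that are polynomials of degree at most $l$ in each variable $x_i$, a one-variable Remez-type inequality together with Fubini's theorem applied along axis-parallel lines yields: for any such polynomial $p:\bR^k\to\bR$, any bounded open convex $B\subset\bR^k$, and any $\eta>0$,
\[
|\{\vx\in B:|p(\vx)|<\eta\sup_{B}|p|\}|\leq C_0\,\eta^{\alpha_0}\,|B|,
\]
with $C_0,\alpha_0>0$ depending only on $l$ and $k$. Next, I would invoke a height function on $G/\Gamma$ built from the structure theory of $G$ and the finiteness of the invariant measure: there exist finitely many algebraic representations $\rho_j:G\to GL(V_j)$ and $\Gamma$-orbits of test vectors $\{\gamma v_j:\gamma\in\Gamma\}\subset V_j$ such that a point $h\Gamma$ lies in a prescribed compact set $C$ exactly when $\max_j\min_{\gamma\in\Gamma}\|\rho_j(h)\gamma v_j\|$ admits a uniform positive lower bound. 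Defining $K_\eta\subset G/\Gamma$ by requiring this quantity to exceed $\eta$ produces a family of compact sets, and it suffices to show $|\{\vx\in B:\Theta(\vx)g\Gamma\notin K_\eta\}|\leq\epsilon|B|$ for $\eta$ chosen small in terms of $\epsilon$.

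The final step is a Kleinbock--Margulis-style covering argument. The functions $\vx\mapsto\|\rho_j(\Theta(\vx)g)\gamma v_j\|$ are norms of polynomial maps of bounded degree, so each is $(C_1,\alpha_1)$-good on convex sets with constants depending only on $k$, $l$, and the representations. If $\Theta(B)g\Gamma\cap C\neq\emptyset$, then at some $\vx_0\in B$ the maximum of these norm functions is bounded below, hence so is the supremum over $B$. One then covers $B$ by sub-convex pieces on each of which a single pair $(j,\gamma)$ is ``critical,'' applies the good property on each piece, and sums, using a Besicovitch-type scheme to control overlaps. This yields $|\{\vx\in B:\Theta(\vx)g\Gamma\notin K_\eta\}|\leq C'\eta^{\alpha_1}|B|$, and choosing $\eta=(\epsilon/C')^{1/\alpha_1}$ concludes.

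The main obstacle is the bookkeeping in the covering step: different points of $B$ may be critical for different $(j,\gamma)$, and the family $\{\gamma v_j:\gamma\in\Gamma\}$ is typically infinite, so one needs an induction on the complexity of the parabolic / Levi structure of $G$ (as in the classical Dani--Margulis argument) to reduce to finitely many ``critical'' sub-representations per scale and to ensure that all constants are uniform in $g$ and $\Theta$. In the general setting where $\Gamma$ is only a closed subgroup with $G/\Gamma$ of finite volume, rather than a lattice in a semisimple group, this uniformity must be extracted from the Levi decomposition of $G$ and the induced arithmetic structure on the semisimple quotient.
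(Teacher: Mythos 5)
First, a point about the comparison itself: the paper does not prove this statement at all. It is Shah's nondivergence theorem, quoted verbatim with the citation \cite[Theorem 3.1]{shah1994limit}, and everything downstream (Corollary \ref{Cor of nondivergence of whole box}, Corollary \ref{cor nondivergence over subbox}) uses it as a black box. Your proposal instead tries to reprove it, and the ingredients you name -- $(C,\alpha)$-good behavior of polynomials, a Mahler/Margulis-type system of representations detecting excursions into the cusp, and a Dani--Margulis covering-and-induction scheme -- are indeed the ideas behind the known proofs (Shah's argument and the later Kleinbock--Margulis formalization in \cite{kleinbock1998flows}). So the strategy is the right one, but as written it is a roadmap with the hardest steps deferred rather than a proof.

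Concretely, three gaps. (a) The compactness criterion you invoke (``$h\Gamma$ lies in a prescribed compact set exactly when $\max_j\min_{\gamma\in\Gamma}\|\rho_j(h)\gamma v_j\|$ is bounded below'') is available off the shelf only for arithmetic quotients; in the stated generality $G$ is an arbitrary Lie group and $\Gamma$ is merely a closed subgroup (possibly non-discrete) with finite invariant measure. The reduction from this setting to one where such a criterion holds -- passing to $\Gamma/\Gamma^0$, the Levi decomposition, the structure/arithmeticity of the resulting lattices, and the rank-one cases -- is a substantive part of Shah's proof, and your closing paragraph names it as ``the main obstacle'' without resolving it. (b) The claimed derivation of the $(C,\alpha)$-good property on an arbitrary bounded convex $B\subset\bR^k$ by a one-variable Remez inequality plus Fubini along axis-parallel lines does not work as stated: on a given line segment through $B$ the supremum of $|p|$ may be far smaller than $\sup_B|p|$, so slicing does not produce the inequality with $\sup_B|p|$ on the right-hand side. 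One needs the coordinate-wise induction for boxes (Kleinbock--Margulis Lemma 3.3, which is exactly what the present paper cites in Lemma \ref{polynomial is good}) together with a comparison of convex bodies with boxes or ellipsoids of comparable measure, or a genuinely multi-dimensional argument. (c) The covering step in which ``a single pair $(j,\gamma)$ is critical'' on each sub-convex piece is precisely the Dani--Margulis induction on flags (the heart of the Kleinbock--Margulis nondivergence theorem); since the family $\{\gamma v_j\}$ is infinite and the critical vector changes with scale, a ``Besicovitch-type scheme to control overlaps'' is not a substitute for that induction. For the purposes of this paper the correct move is the one the paper makes, namely citing Shah; if you want a self-contained proof, items (a)--(c) are where the actual work lies.
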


Let $\mathcal{P}(G/\Gamma)$ be the space of Borel probability measures on $G/\Gamma$ with the weak-* topology. Let $\Theta:\bR^k\to G$ be a regular algebraic map such that $\Theta(\boldsymbol{0})=e$. Then $\Theta\in \mathcal{P}_l(\bR^k,G)$ for some $l$. For any bounded open convex set $B \subset \bR^k$, we define $\mu_{B}\in \mathcal{P}(G/\Gamma)$ such that for any $f\in C_c(G/\Gamma)$,
\begin{align}\label{normalized meausre over box}
    \int_{G/\Gamma} f d\mu_{B}=\frac{1}{|B|}\int_{\vx\in B}f(\Theta(\vx)\Gamma)d\vx.
\end{align}

We have the following corollary:
\begin{corollary}(\cite[Corollary 3.1]{shah1994limit})\label{Cor of nondivergence of whole box}
Given a sequence $\{B_n\}_{n\in \mathbb{N}}$ of bounded open convex subsets of $\bR^k$ containing $\bs{0}$, there exists a strictly increasing subsequence $\{n_i\}_{i\in \mathbb{N}}$ and a measure $\mu\in \mathcal{P}(G/\Gamma)$ such that $\mu_{B_{n_i}}\to \mu$ as $i\to \infty$.
\end{corollary}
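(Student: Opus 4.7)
The plan is to obtain the subsequential limit by a standard tightness and Prokhorov argument, with Theorem \ref{Nondivergence} supplying the uniform tightness.

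First I would fix the compact set $C=\{e\Gamma\}\subset G/\Gamma$, which is nonempty and compact. Since each $B_n$ contains $\bs{0}$ and $\Theta(\bs{0})=e$, we have $\Theta(B_n)\Gamma\cap C\ni e\Gamma$, so the hypothesis $\Theta(B_n)g\Gamma\cap C\neq\emptyset$ of Theorem \ref{Nondivergence} (taking $g=e$) is satisfied for every $n$. Note also that $\Theta\in\classP$ for $l$ equal to the maximum degree of the polynomial coordinates of $\Theta$ (in an algebraic embedding of $G$ into some $SL_N(\bR)$), so $l$ is independent of $n$.

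Next, for each integer $m\geq 1$, I would apply Theorem \ref{Nondivergence} with $\epsilon=1/m$ to obtain a compact set $K_m\subset G/\Gamma$ such that
\begin{equation*}
\mu_{B_n}(K_m)=\frac{1}{|B_n|}\bigl|\{\vx\in B_n:\Theta(\vx)\Gamma\in K_m\}\bigr|>1-\frac{1}{m}
\end{equation*}
for every $n$. This shows that the family $\{\mu_{B_n}\}_{n\in\mathbb{N}}\subset\mathcal{P}(G/\Gamma)$ is uniformly tight.

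Finally, I would invoke Prokhorov's theorem: since $G/\Gamma$ is a separable, locally compact (in fact second countable) metric space and the family $\{\mu_{B_n}\}$ is tight, there is a strictly increasing subsequence $\{n_i\}$ and a Borel probability measure $\mu\in\mathcal{P}(G/\Gamma)$ with $\mu_{B_{n_i}}\to\mu$ in the weak-$*$ topology. Crucially, tightness ensures that no mass escapes to infinity, so the limit $\mu$ is a genuine probability measure rather than a subprobability measure (which is exactly the point of invoking Theorem \ref{Nondivergence} rather than relying only on Banach--Alaoglu in $C_c(G/\Gamma)^{*}$). There is no real obstacle here: the only nontrivial input is Theorem \ref{Nondivergence} itself, and the rest is a direct application of Prokhorov's theorem.
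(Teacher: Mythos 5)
Your proposal is correct and is essentially the argument intended here: the paper does not reprove this statement but cites \cite[Corollary 3.1]{shah1994limit}, whose proof is exactly this use of Theorem \ref{Nondivergence} with $C=\{e\Gamma\}$ (valid since $\bs{0}\in B_n$ and $\Theta(\bs{0})=e$) to get uniform tightness of $\{\mu_{B_n}\}$ and then a compactness extraction. The only cosmetic difference is that you invoke Prokhorov's theorem where the paper (in its proof of the analogous Corollary \ref{cor nondivergence over subbox}) works with the one-point compactification of $G/\Gamma$ and rules out escape of mass, which amounts to the same thing.
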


Let $J$ be a box contained in $[0,1]^k$. Let $B_n$ and $B_n^J$ be defined as in (\ref{rescale boxes}) and (\ref{expanding subbox}), respectively. Recall that we also define the measure $\mu_n^J$ as in (\ref{measure over subbox}).

An immediate consequence of Theorem \ref{Nondivergence} and Corollary \ref{Cor of nondivergence of whole box} is the following:

\begin{corollary}\label{cor nondivergence over subbox}
Assume that $|J|=\epsilon>0$, then there exists a strictly increasing subsequence $\{n_i\}_{i\in\mathbb{N}}$ and a measure $\mu^J\in \mathcal{P}(G/\Gamma)$ such that $\mu_{n_i}^J\to \mu^J$ as $i \to \infty$.
\end{corollary}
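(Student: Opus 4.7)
My plan is a tightness argument followed by Prokhorov's theorem. The substantive issue is that Theorem~\ref{Nondivergence} cannot be applied to $B_n^J$ in the form used to prove Corollary~\ref{Cor of nondivergence of whole box}: there one takes the basepoint $g\Gamma = \Gamma$ and the compact set $C = \{e\Gamma\}$, which is legitimate only because $\Theta(B_n) \ni e$, i.e., $\bs{0} \in B_n$. For a subbox $B_n^J$ whose defining parameter box $J$ may fail to contain $\bs{0}$, no such free basepoint is available, and an a~priori bound on $\mu_n^J$-mass near infinity is not immediate.

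To circumvent this, I would work with the rescaled polynomial maps
\[
\Phi_n(\valpha) := \theta(\valpha, T_n) = \Theta(\alpha_1 T_n^{\lambda_1}, \ldots, \alpha_k T_n^{\lambda_k}), \qquad \valpha \in \bR^k.
\]
Since coordinate rescaling is linear, substituting $\valpha = t\bs{a} + \bs{b}$ into $\Phi_n$ amounts to substituting $t\bs{a}' + \bs{b}'$ into $\Theta$, with $\bs{a}' = (a_i T_n^{\lambda_i})_{i}$ and $\bs{b}' = (b_i T_n^{\lambda_i})_i$; consequently $\Phi_n \in \classP$ for the same $l$ as $\Theta$, uniformly in $n$. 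Moreover $\Phi_n(\bs{0}) = \Theta(\bs{0}) = e$. I would then fix once and for all a bounded open convex set $B^{\ast} \subset \bR^k$ containing both $\bs{0}$ and the closed unit cube $[0,1]^k$, for instance $B^{\ast} = (-1,2)^k$, so that $\Phi_n(B^{\ast})\Gamma$ meets the compact set $C := \{e\Gamma\}$ for every $n$.

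Given $\delta > 0$, I would apply Theorem~\ref{Nondivergence} to the family $\{\Phi_n\}$ with this $C$, basepoint $\Gamma$, domain $B = B^{\ast}$, and tolerance $\varepsilon := \delta |J|/|B^{\ast}|$. This delivers a compact $K_\delta \subset G/\Gamma$, independent of $n$, with
\[
\big|\{\valpha \in B^{\ast} : \Phi_n(\valpha)\Gamma \notin K_\delta\}\big| < \delta |J|.
\]
Because $J \subset B^{\ast}$, dividing by $|J|$ yields $\mu_n^J(G/\Gamma \setminus K_\delta) \leq \delta$ for all $n$. As $\delta$ is arbitrary, $\{\mu_n^J\}_{n\in \mathbb{N}}$ is uniformly tight on the Polish space $G/\Gamma$, so Prokhorov's theorem produces a strictly increasing subsequence $\{n_i\}$ and a Borel probability measure $\mu^J$ with $\mu_{n_i}^J \to \mu^J$ in the weak topology, which is the desired conclusion. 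The single conceptual hurdle, the missing basepoint for $J$, is absorbed entirely into the harmless volume ratio $|B^{\ast}|/|J|$, which is a fixed positive constant once $J$ is fixed.
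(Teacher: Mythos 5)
Your proof is correct, but it takes a genuinely different route from the paper. The paper never applies Theorem~\ref{Nondivergence} to the rescaled maps directly; instead it works on the one-point compactification $X^*$ and argues by contradiction using the whole-box measures: after passing to a subsequence, $\mu_{B_n}\to\mu\in\mathcal{P}(X)$ by Corollary~\ref{Cor of nondivergence of whole box} (here the basepoint issue is absent since $\bs{0}\in B_n$), and if $\mu^J$ gave mass $\epsilon\prm>0$ to $X^*\setminus X$, then, since $|B_n^J|=\epsilon|B_n|$, splitting the integral over $B_n$ into $B_n^J$ and its complement would force $\mu_{B_n}(K)\leq 1-\epsilon\epsilon\prm/2$ for a suitable compact $K$, contradicting the nondivergence of $\mu_{B_n}$. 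Your argument instead exploits the observation that $\theta(\cdot,T_n)=\Theta\circ D_n$ with $D_n$ linear stays in the same class $\classP$ uniformly in $n$ and still sends $\bs{0}$ to $e$, so Theorem~\ref{Nondivergence} (whose compact set is uniform over the class, over basepoints, and over convex domains) can be applied on a fixed convex set $B^{\ast}\supset\{\bs{0}\}\cup[0,1]^k$ with tolerance $\delta|J|/|B^{\ast}|$, yielding uniform tightness of $\{\mu_n^J\}$ itself and then convergence along a subsequence by Prokhorov (or, equivalently, by the compactification argument). The trade-off: your approach gives the stronger quantitative conclusion that the family $\{\mu_n^J\}$ is tight with compacta independent of $n$, and it bypasses any appeal to Corollary~\ref{Cor of nondivergence of whole box}, but it uses the specific rescaled-box structure of $B_n^J$; the paper's proportionality argument only needs $B_n^J\subset B_n$ with $|B_n^J|=\epsilon|B_n|$ and $\bs{0}\in B_n$, so it would apply verbatim to more general subsets of the expanding boxes. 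Both are sound, and both ultimately rest on the uniformity built into Theorem~\ref{Nondivergence}.
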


\begin{proof}
Consider the one-point compactification $X^*$ of $X=G/\Gamma$. By the compactness of $\mathcal{P}(X^*)$, after passing to a subsequence, we can assume that $\mu^J_n$ converges to a probability measure $\mu^J$ in $\mathcal{P}(X^*)$ as $n\to\infty$. Again by passing to a subsequence, we can assume that $\mu_n=\mu_{B_{n}}\to \mu$ as $n\to \infty$, for some $\mu\in \mathcal{P}(X)$ by Corollary \ref{Cor of nondivergence of whole box}.

Suppose that $\mu^J(X^*\backslash X)=\epsilon\prm >0$. By Theorem \ref{Nondivergence}, there exists a compact $K\subset X$ such that for all $n$ large enough, $\mu_n(K)>1-\epsilon\prm \epsilon /2$. On the other hand, since $\mu^J(X^*\backslash X)=\epsilon\prm >0$, for all $n$ large enough, $\mu_n^J(K)\leq 1-\epsilon\prm/2$. Therefore for all $n$ large enough, we have the following estimates:
\begin{align*}
    \mu_n(K)=&\frac{1}{|B_{n}|}(\int_{B_n\backslash B^{J}_n}\chi_K(\Theta(\vx)\Gamma)d\vx+\int_{B^{J}_n} \chi_K(\Theta(\vx)\Gamma)d\vx)\\
    & \leq \frac{1}{|B_n|}(|B_n\backslash B^{J}_n|+|B^{J}_n|(1-\epsilon\prm/2))\\
    &\leq 1-\epsilon+\epsilon (1-\epsilon\prm/2)=1-\epsilon \epsilon\prm/2,
\end{align*}

where $\chi_K$ is the characteristic function of $K$. But $\mu_n(K)>1-\epsilon\prm \epsilon /2$, this lead to a contradiction and completes the proof.
\end{proof}

\section{Linearization and polynomial trajectories near singular sets}\label{trajectories near singular set}
\begin{definition}\label{good function}
Given $C>0$, $\alpha>0$, and a box $B \subset \bR^k$, a continuous function $f:\bR^d\rightarrow \mathbb{R}$ is $\good$ on $B$ if one has
\begin{equation}
    \forall \delta>0, |\{\vx \in B:|f(\vx)|<\delta\}|\leq C\left(\frac{\delta}{\|f\|_B}\right)^{\alpha}|B|,
\end{equation}
where $\|f\|_\bx=sup_{\vx\in B}|f(\vx)|$.
\end{definition}

The notion of $\good$ functions, which measures the growth property of a continuous function, was first introduced in \cite{kleinbock1998flows}. Throughout this section, by degree of a multi-variable polynomial, we mean the highest degree among all variables, for example, degree of $x^5y^2$ is 5.

\begin{lemma}\label{polynomial is good}
Let $P:\bR^k \rightarrow \bR$ be a polynomial map of degree $\leq l$. Then for any box $\bx \subset \bR^k$, P is $\good$ on $\bx$, where $\alpha=\frac{1}{kl}$ and $C=C_{k,l}\geq 1$ is a constant only depending on $k,l$.
\end{lemma}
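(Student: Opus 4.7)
The plan is to prove the lemma by induction on the dimension $k$, with the one-variable case as the foundation. A preliminary reduction rescales $B$ to $[0,1]^k$ by an affine change of variables along each coordinate axis: this preserves the $\good$ property (the ratios $|\{|P|<\delta\}|/|B|$ and $\delta/\|P\|_B$ are invariant) and the degree of $P$, so I may assume $B=[0,1]^k$ and $\|P\|_B=1$. The base case $k=1$ is the classical Kleinbock--Margulis bound for one-variable polynomials: equivalence of norms on the finite-dimensional space of polynomials of degree at most $l$ on $[0,1]$ yields constants $A_l,B_l>0$ with
\begin{equation*}
A_l\max_{0\leq i\leq l}|a_i|\leq \sup_{x\in[0,1]}\Bigl|\sum_{i=0}^l a_ix^i\Bigr|\leq B_l\max_{0\leq i\leq l}|a_i|,
\end{equation*}
and combining this with the observation that $\{x\in[0,1]:|P(x)|<\delta\}$ is contained in a union of at most $l+1$ intervals around the real zeros of $P$ (where $|P|$ behaves polynomially in the distance to the nearest zero) gives the desired $(C_l,1/l)$-good estimate.

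For the inductive step, assume the result in dimension $k-1$ with exponent $1/((k-1)l)$ and constant $C_{k-1,l}$, and write $P(x_1,x')=\sum_{i=0}^l c_i(x')\,x_1^i$, where $x'=(x_2,\dots,x_k)\in[0,1]^{k-1}$ and each $c_i$ is a polynomial of degree at most $l$ in the variables $x'$. Set $M(x'):=\sup_{x_1\in[0,1]}|P(x_1,x')|$ and, for a threshold $\eta>0$ to be chosen, decompose $[0,1]^{k-1}$ into $\{M(x')\geq \eta\}$ and $\{M(x')<\eta\}$. On the first region, the one-dimensional case applied slice by slice gives $|\{x_1:|P(x_1,x')|<\delta\}|\leq C_l(\delta/\eta)^{1/l}$, whose integral over $x'$ contributes at most $C_l(\delta/\eta)^{1/l}|B|$. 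On the second region, the upper norm equivalence forces $|c_i(x')|\leq \eta/A_l$ for every $i$; selecting an index $i^\ast$ that maximizes $\|c_i\|_{[0,1]^{k-1}}$, the lower bound $\|P\|_B\leq B_l\max_i\|c_i\|_{[0,1]^{k-1}}$ implies $\|c_{i^\ast}\|_{[0,1]^{k-1}}\geq 1/B_l$, and then the induction hypothesis applied to the single $(k-1)$-variable polynomial $c_{i^\ast}$ yields
\begin{equation*}
|\{x':M(x')<\eta\}|\leq C_{k-1,l}\bigl(B_l\eta/A_l\bigr)^{1/((k-1)l)}.
\end{equation*}

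The final step is to optimize $\eta$. Balancing $(\delta/\eta)^{1/l}$ with $\eta^{1/((k-1)l)}$ leads to $\eta=\delta^{(k-1)/k}$, at which choice both contributions become $\delta^{1/(kl)}$ up to constants depending only on $k$ and $l$; tracking $A_l,B_l,C_l$, and $C_{k-1,l}$ through the inequality then produces the advertised constant $C_{k,l}$. I expect the main obstacle to be the inductive step itself: the natural intermediate quantity $M(x')$ is not itself a polynomial in $x'$, so the induction hypothesis cannot be applied to it directly. The key trick is to transfer control from $M(x')$ to a single coefficient $c_{i^\ast}(x')$ via norm equivalence on one-variable polynomials, and arranging this transfer so that the two competing contributions balance at exactly the exponent $1/(kl)$ (rather than something weaker) is the delicate part of the argument.
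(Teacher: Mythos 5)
Your proof is correct and follows essentially the route the paper points to: the paper offers no argument of its own, merely citing the induction-on-dimension slicing proof behind Lemma 3.3 of Kleinbock--Margulis (and \S 4 of Kleinbock's 2008 paper), and your inductive step --- slicing in $x_1$, using the one-variable case on the slices where $M(x')\geq\eta$, transferring the smallness of the slice sup-norm to the single coefficient $c_{i^*}$ via norm equivalence so the induction hypothesis applies, and balancing with $\eta=\delta^{(k-1)/k}$ to land exactly on the exponent $\frac{1}{kl}$ --- is a correct, self-contained version of that argument, with the affine rescaling to $[0,1]^k$ and the normalization $\|P\|_B=1$ handled legitimately. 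Only cosmetic quibbles: the labels ``upper'' and ``lower'' on the two norm-equivalence inequalities are swapped relative to how you use them, and the base case $k=1$ is quoted rather than fully proved, which is exactly the reliance on the classical one-variable Kleinbock--Margulis bound that the paper itself makes.
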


\begin{proof}
This can be deduced directly from the proof of Lemma 3.3 in \cite{kleinbock1998flows} using the induction hypothesis. See also \S 4 of \cite{kleinbock2008dirichlet}.
\end{proof}

The following lemma is an easy consequence of a map being $\good$.
\begin{lemma}\label{consequence of good}
Let $P:\bR^k\to \bR$ be a continuous map. Suppose $P$ is $\good$ on a measurable set $E$. Assume $\sup_{\vx\in E\prm}|P(\vx)|<R$ for some measurable subset $E\prm$ of $E$ with positive Lebesgue measure and some $R>0$. Then there is an $R\prm>0$ such that $\sup_{\vx\in E}|P(\vx)|<R\prm$.
\end{lemma}
\begin{proof}
By $\good$ property of the function $P$, we have
\begin{align*}
  | \{\vx\in E: |P(\vx)|<R\}|<C(\frac{R}{\sup_{\vx\in E}|P(\vx)|})^{\alpha}\cdot|E|.
\end{align*}
On the other hand, by assumption, we also have 
\begin{align*}
     |E\prm|<|\{\vx\in E: |P(\vx)|<R\}|.
\end{align*}
Take $R\prm=\frac{C^{1/\alpha} R|E|^{1/\alpha}}{|E\prm|^{1/\alpha}}$, the lemma is proved.
\end{proof}

In the rest of this section, we assume that $G$ is a Lie group and $\Gamma$ is a closed subgroup of $G$ such that $\Gamma^0$, the connected component of $\Gamma$, is normal in $G$. We will use linearization technique to study the behavior of polynomial trajectories near singular sets. The linearization technique originated in the work of Dani and Smillie \cite{dani1984uniform}. This technique had been developed by several mathematicians, see for example \cite{dani1990orbit}\cite{shah1991uniformly}\cite{dani1993limit}\cite{lindenstrauss2019quantitative}. The recent study of linearization technique in \cite{lindenstrauss2019quantitative} also gives a quantitative description of unipotent orbits near singular sets.

The following special class of closed subgroups plays a key role in linearization technique.

\begin{definition}\label{Ratner's Class}
Let $\mathcal{H}$ be the class of all closed connected proper subgroups $H$ of $G$ such that $\Gamma^0\subset H$, $H/H\cap\Gamma$ admits an $H$-invariant probability measure and the subgroup generated by all unipotent one-parameter subgroups of $H$ acts ergodically on $H/H\cap\Gamma$ with respect to the $H$-invariant probability measure. 
\end{definition}

\begin{theorem}(cf. \cite[Theorem 1.1]{ratner1991raghunathanannals})
The collection $\mathcal{H}$ is countable.
\end{theorem}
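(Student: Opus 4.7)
The plan is to follow Ratner's original exterior-power and Borel-density argument. First, I would reduce to the case where $\Gamma$ is discrete: since $\Gamma^0$ is normal in $G$ and is contained in every $H \in \mathcal H$ by definition, we may pass to the quotient $\bar G := G/\Gamma^0$, in which $\bar \Gamma := \Gamma/\Gamma^0$ is discrete, and (by second countability of $G$) countable. The correspondence $H \leftrightarrow H/\Gamma^0$ identifies $\mathcal H$ with the analogous collection for $(\bar G, \bar \Gamma)$, so henceforth we may assume $\Gamma$ itself is discrete and countable.

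Next, for each $H \in \mathcal H$ with $d := \dim H$, I would associate the line $\ell_H := \bigwedge^d \mathfrak h \subset V_d := \bigwedge^d \mathfrak g$. Since $H$ is connected, $\mathfrak h$ (and hence $H$) is recovered from $\ell_H$, and since $d$ takes only finitely many values $\leq \dim G$, it suffices to fix $d$ and prove that only countably many lines $\ell \in \mathbb{P}(V_d)$ arise as $\ell_H$ for some $H \in \mathcal H$ with $\dim H = d$.

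The hypotheses in the definition of $\mathcal H$ force $H \cap \Gamma$ to be a lattice in $H$ and $H$ to be unimodular (both follow from the existence of the $H$-invariant probability measure on $H/(H \cap \Gamma)$ together with ergodicity of unipotents); hence $\mathrm{Ad}(\gamma) \ell_H = \pm \ell_H$ for every $\gamma \in H \cap \Gamma$, while each unipotent one-parameter subgroup of $H$ fixes $\ell_H$ pointwise. A Mautner--Borel density argument applied to the ergodic action of the subgroup $H^+ \subset H$ generated by unipotents shows that $\mathrm{Ad}(H\cap\Gamma)$ is Zariski dense in $\mathrm{Ad}(H^+)$; therefore $\ell_H$ is the unique (up to sign) line in $V_d$ stabilized by the subgroup $\mathrm{Ad}(H \cap \Gamma) \subset \mathrm{Ad}(\Gamma)$.

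To conclude, one would show that only countably many such lines arise. Because $\Gamma$ is countable, so is the family of its finitely generated subgroups, and the key claim is that for each such subgroup $\Lambda \subset \Gamma$ only finitely many Lie subalgebras $\mathfrak h \subset \mathfrak g$ can satisfy simultaneously (i) $\mathrm{Ad}(\Lambda) \mathfrak h = \mathfrak h$, (ii) $\mathfrak h$ contains a nontrivial nilpotent element coming from a unipotent one-parameter subgroup of $G$, and (iii) the line $\bigwedge^d \mathfrak h$ is stabilized up to sign by $\mathrm{Ad}(\Lambda)$. The hard part will be this local-finiteness statement: the Lie subalgebras corresponding to elements of $\mathcal H$ must not accumulate in $\bigwedge^d \mathfrak g$. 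Ratner's original argument handles this by combining the unique ergodicity of unipotent flows on the closed orbit $H\Gamma/\Gamma$ with a careful analysis of the algebraic constraints imposed by the ergodicity condition in the definition of $\mathcal H$, and it is here that the full strength of that definition is essential.
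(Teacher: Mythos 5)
The paper does not prove this statement at all: it is quoted directly from Ratner (the citation to \cite[Theorem 1.1]{ratner1991raghunathanannals}), so there is no internal argument to compare against. Judged as a standalone proof, your proposal has a genuine gap, in fact two. First, the assertion that ``$\ell_H$ is the unique (up to sign) line in $V_d$ stabilized by $\mathrm{Ad}(H\cap\Gamma)$'' does not follow from the Borel--Mautner density step and is false as stated: density only gives that the Zariski closure of $\mathrm{Ad}(H\cap\Gamma)$ contains $\mathrm{Ad}$ of the unipotent-generated part of $H$, and a subgroup of $\mathrm{Ad}(\Gamma)$ can stabilize many lines in $\bigwedge^d\mathfrak g$ (for instance the line of any $\mathrm{Ad}(\Gamma)$-fixed vector, or the lines attached to other candidate subgroups of the same dimension --- which is precisely the ambiguity the theorem must rule out). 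So the reduction ``$H$ is determined by the subgroup $H\cap\Gamma$ of $\Gamma$'' is not yet established; note also that a countable group can have uncountably many subgroups, so even that determination alone would not finish the argument.

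Second, and decisively, the step you call ``the key claim'' --- that for each finitely generated $\Lambda\subset\Gamma$ only finitely (or countably) many subalgebras $\mathfrak h$ satisfying your conditions (i)--(iii) can occur --- is exactly the substance of the theorem, and you explicitly leave it unproved, deferring to ``Ratner's original argument.'' An outline that appeals to the cited source for its crucial step is not a proof; if the intent is to invoke Ratner's theorem, the honest course (and the one the paper takes) is simply to cite it. If the intent is to reprove it, the standard route is: show $H\subseteq\mathrm{Zcl}(H\cap\Gamma)$ by the generalized Borel density theorem (using the finite invariant measure and ergodicity of the unipotent part), pick a finitely generated $\Lambda\le H\cap\Gamma$ with the same Zariski closure (Noetherianity), and then prove --- this is the part your sketch is missing --- that $H$ is recovered from $\Lambda$ via the lattice and ergodicity hypotheses, so that the countably many finitely generated subgroups of the countable group $\Gamma/\Gamma^0$ index $\mathcal H$.
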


Let $\pi:G\to G/\Gamma$ be the natural projection map. Let $W$ be a subgroup of $G$ which is generated by one-parameter unipotent subgroups of $G$ contained in $W$. For $H\in\mathcal{H}$, define 
\begin{align*}
    N(H,W)=\{g\in G: W\subset gHg^{-1}\};\\
    S(H,W)=\bigcup_{F\in \mathcal{H},F\subsetneq H}N(F,W).
\end{align*}

For a proper subgroup $H\in \mathcal{H}$, $N(H,W)$ is the obstruction of equidistribution of $W$-orbit in $G/\Gamma$. We call such $N(H,W)$ a singular set.

The following is a version of Ratner's theorem describing probability measures invariant under the subgroup $W$ given as above.

\begin{theorem}(cf. \cite[Theorem 2.2]{mozes1995space}) \label{Ratner's Theorem}
Let $W$ be a subgroup which is generated by one-parameter unipotent subgroups of $G$ contained in $W$. Let $\mu\in \mathcal{P}(G/\Gamma)$ be a $W$-invariant measure. There exists $H\in \mathcal{H}$ such that 
\begin{align*}
    \mu(\pi(N(H,W)))>0\text{ and }\mu(\pi(S(H,W)))=0.
\end{align*}
Moreover, almost every $W$-ergodic component of $\mu$ on $\pi(N(H,W))$ is a measure of the form $g_{*}\mu_H$, where $g\in N(H,W)\setminus S(H,W)$ and $\mu_H$ is a finite $H$-invariant measure on $\pi(H)$. In particular, if $H$ is a normal subgroup of $G$ then $\mu$ is invariant under $H$.
\end{theorem}

Let $H\in\mathcal{H}$. Let $\mathfrak{g},\mathfrak{h}$ be the Lie algebras of $G$ and $H$, respectively. Let $d=\dim \mathfrak{h}$ and $V_H=\bigwedge^d \mathfrak{g}$. Consider the adjoint representation of $G$ on $V_H=\bigwedge^d \mathfrak{g}$. Fix a vector $p_H\in \bigwedge^d \mathfrak{h}\setminus \{0\}$. Also define a continuous map $\eta_H: G\to V_H$ by $\eta_H(g)=g\cdot p_H=\bigwedge^d Ad(g)p_H$. Define 
\begin{align}\label{definition of $N^1(H)$}
    N^1(H):=\eta_H^{-1}(p_H)=\{g\in N(H): det(Ad g|_{\mathfrak{h}})=1\},
\end{align}
where $N(H)$ is the normalizer of $H$ in $G$.

Recall that $W$ is a subgroup of $G$ which is generated by unipotent one-parameter subgroups of $G$ contained in $W$. Let $A_H$ denote the Zariski closure of $\eta_H(N(H,W))$ in ${V}_H$.
Evidently, $N(H,W)$ is contained in the preimage of $A_H$. Indeed, we have the following:
\begin{lemma}(\cite[Proposition 5.1]{shah1994limit})\label{preimage of zariski closure of N(H,W)}
Let $H\in \mathcal{H}$, then $\eta_H^{-1}(A_H)=N(H,W).$
\end{lemma}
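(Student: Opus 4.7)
The forward inclusion $N(H,W) \subseteq \eta_H^{-1}(A_H)$ is immediate from the definition of $A_H$, so the content of the lemma is the reverse inclusion. The plan is to sandwich $A_H$ between $\eta_H(N(H,W))$ and a Zariski closed subset $C_H \subseteq V_H$ whose preimage under $\eta_H$ is visibly $N(H,W)$.

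Let $\mathfrak{w} \subseteq \mathfrak{g}$ denote the Lie subalgebra spanned by the infinitesimal generators of the one-parameter unipotent subgroups contained in $W$; the relevant structural fact is that $W$ is connected, being generated by connected subgroups through $e$. The natural candidate for the sandwiching set is
\begin{equation*}
C_H := \bigl\{ v \in V_H : X \wedge v = 0 \text{ in } {\textstyle\bigwedge}^{d+1}\mathfrak{g} \text{ for every } X \in \mathfrak{w} \bigr\}.
\end{equation*}
Because wedging with a fixed vector is a linear map $V_H \to \bigwedge^{d+1}\mathfrak{g}$, the set $C_H$ is a linear subspace of $V_H$ and in particular is Zariski closed.

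The core calculation is that $\eta_H^{-1}(C_H) = N(H,W)$. The key identity is
\begin{equation*}
X \wedge \eta_H(g) \;=\; X \wedge (g \cdot p_H) \;=\; g \cdot \bigl( \mathrm{Ad}(g^{-1})X \wedge p_H \bigr),
\end{equation*}
and since $g$ acts invertibly on $\bigwedge^{d+1}\mathfrak{g}$, this vector vanishes iff $\mathrm{Ad}(g^{-1})X \wedge p_H = 0$. Combined with the standard top-exterior-power fact that $Y \wedge p_H = 0$ iff $Y \in \mathfrak{h}$, the membership $\eta_H(g) \in C_H$ becomes the Lie algebra inclusion $\mathrm{Ad}(g^{-1}) \mathfrak{w} \subseteq \mathfrak{h}$. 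Upgrading this to the group inclusion $g^{-1}Wg \subseteq H$, i.e., $g \in N(H,W)$, uses connectedness of $W$ together with connectedness of $H$ (the latter is built into the definition of $\mathcal{H}$); connectedness of $W$ in turn reduces the group inclusion to checking each one-parameter unipotent generator of $W$, which follows from the Lie algebra inclusion via the exponential map.

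Once $\eta_H^{-1}(C_H) = N(H,W)$ is established, the proof concludes by pure formal reasoning: $\eta_H(N(H,W)) \subseteq C_H$ and $C_H$ is Zariski closed, so the minimality of $A_H$ as a Zariski closure forces $A_H \subseteq C_H$, and therefore $\eta_H^{-1}(A_H) \subseteq \eta_H^{-1}(C_H) = N(H,W)$. The only nonformal step in this plan is the Lie-algebra-to-group upgrade, which is standard given connectedness of $W$ and $H$ but would fail without both assumptions; every other step is routine multilinear algebra in $V_H = \bigwedge^d \mathfrak{g}$, so no substantive obstacle is anticipated.
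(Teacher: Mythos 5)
Your proposal is correct: the identity $X\wedge(g\cdot p_H)=g\cdot(\mathrm{Ad}(g^{-1})X\wedge p_H)$, the fact that $Y\wedge p_H=0$ iff $Y\in\mathfrak{h}$, and the generation of $W$ by unipotent one-parameter subgroups (with $H$ closed and connected) do give $\eta_H^{-1}(C_H)=N(H,W)$, and the Zariski-closure sandwich then yields the lemma. The paper does not prove this statement itself but cites \cite[Proposition 5.1]{shah1994limit}, and your wedge-vanishing argument is essentially the same linearization argument underlying that reference, so there is nothing genuinely different to compare.
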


\begin{theorem}\label{discrete orbit}(\cite[Theorem 5.1]{shah1994limit})
Let $H\in \mathcal{H}$, then

(1) The orbit $\Gamma\cdot p_H$ is closed, hence discrete;

(2) $N^1(H)\Gamma$ is closed in $G/\Gamma$.
\end{theorem}

Recall that throughout this section, we assume that $\Gamma$ is a closed subgroup and $\Gamma^0$ is normal in $G$ instead of $\Gamma$ being a discrete subgroup in $G$. The following theorems stated in \cite{dani1993limit} still hold because proof of them essentially only use the properties of subgroups in the class $\mathcal{H}$ (see Definition \ref{Ratner's Class}) and that $\eta_H(\Gamma)$ is discrete for $H\in \mathcal{H}$ as in Theorem \ref{discrete orbit}.

Let $\Gamma_H=N(H)\cap \Gamma$. For any $\delta\in \Gamma_H$, $\delta$ preserves the volume of $H\Gamma/\Gamma$. Hence, $\delta\cdot p_H=\pm p_H$. Indeed, $\Gamma_H$ is characterized by this property.

\begin{lemma}\label{lemma stablizer in Gamma H}(\cite[Lemma 3.1]{dani1993limit})
Let $H\in \mathcal{H}$, then $\Gamma_H=\{\gamma\in \Gamma: \eta_H(\gamma)=\pm p_H\}$.
\end{lemma}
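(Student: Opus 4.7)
The plan is to prove the two inclusions separately. The easier direction is linear algebra: if $\eta_H(\gamma) = \pm p_H$ then $\mathrm{Ad}(\gamma)$ stabilizes the line $\bigwedge^d \mathfrak{h}$ inside $\bigwedge^d \mathfrak{g}$. Via the Plücker description of the Grassmannian, preserving the line through a decomposable $d$-vector is equivalent to preserving the corresponding $d$-dimensional subspace, so $\mathrm{Ad}(\gamma)\mathfrak{h} = \mathfrak{h}$; since $H$ is connected this forces $\gamma H \gamma^{-1} = H$, and hence $\gamma \in N(H) \cap \Gamma = \Gamma_H$.

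For the nontrivial inclusion $\Gamma_H \subset \{\gamma \in \Gamma : \eta_H(\gamma) = \pm p_H\}$, fix $\gamma \in \Gamma_H$. Conjugation $c_\gamma$ is then a Lie group automorphism of $H$ with differential $\mathrm{Ad}(\gamma)|_\mathfrak{h}$ at the identity, so $\eta_H(\gamma) = \det(\mathrm{Ad}(\gamma)|_\mathfrak{h})\, p_H$, and it suffices to show $|\det \mathrm{Ad}(\gamma)|_\mathfrak{h}| = 1$. Because $\gamma \in \Gamma$, the automorphism $c_\gamma$ also preserves $\Lambda := H \cap \Gamma$, so it descends to a diffeomorphism $\bar c_\gamma$ of $H/\Lambda$. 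If $\mu$ is the $H$-invariant probability measure on $H/\Lambda$ guaranteed by $H \in \mathcal{H}$, then $(\bar c_\gamma)_* \mu$ is again $H$-invariant: for every $g \in H$,
\[
(L_g)_* (\bar c_\gamma)_* \mu = (\bar c_\gamma)_* (L_{c_\gamma^{-1}(g)})_* \mu = (\bar c_\gamma)_* \mu,
\]
using the $H$-invariance of $\mu$. By Weil's theorem on uniqueness of invariant measures on homogeneous spaces, $(\bar c_\gamma)_* \mu$ is a scalar multiple of $\mu$; since both are probability measures the scalar is $1$, so $(\bar c_\gamma)_* \mu = \mu$. On the other hand, in local coordinates $\bar c_\gamma$ rescales the measure induced by Haar measure on $H$ by a factor of $|\det \mathrm{Ad}(\gamma)|_\mathfrak{h}|^{-1}$. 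Combining the two computations forces $|\det \mathrm{Ad}(\gamma)|_\mathfrak{h}| = 1$, as needed.

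The main subtlety I expect is the final local-Jacobian step when $\Lambda$ is merely closed rather than discrete, a setting the paper explicitly allows (see the third remark following Theorem \ref{main theorem}). The cleanest workaround is to invoke the standing assumption of this section that $\Gamma^0$ is normal in $G$ and pass to the quotient $G/\Gamma^0$: the image of $\Lambda$ is then discrete in $H/\Gamma^0$, the quotient measure visibly lifts to Haar on $H/\Gamma^0$, and the usual change-of-variables formula yields the stated Jacobian $|\det \mathrm{Ad}(\gamma)|_{\mathfrak{h}/\mathrm{Lie}(\Gamma^0)}|^{-1}$. The remaining factor $|\det \mathrm{Ad}(\gamma)|_{\mathrm{Lie}(\Gamma^0)}|$ is itself $1$ by the same volume-preservation argument applied to the connected Lie group $\Gamma^0$ (which carries a bi-invariant measure and is normalized by $\gamma$), so pulling the equality back up to $H$ gives $|\det \mathrm{Ad}(\gamma)|_\mathfrak{h}| = 1$ and completes the proof.
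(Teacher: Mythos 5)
Your overall route is the same as the paper's (and Dani--Margulis's, which the paper simply cites): the inclusion $\{\gamma:\eta_H(\gamma)=\pm p_H\}\subset\Gamma_H$ by the Pl\"ucker argument, and the reverse inclusion by showing $|\det(\mathrm{Ad}(\gamma)|_{\mathfrak h})|=1$ via the finite $H$-invariant measure on $H/(H\cap\Gamma)$ and uniqueness of invariant measures; this matches the one-line justification the paper gives before the lemma ("$\delta$ preserves the volume of $H\Gamma/\Gamma$"). Your first two paragraphs are correct, and you rightly notice the subtlety the paper glosses over: when $\Gamma$ is only closed, $\Lambda=H\cap\Gamma$ has $\Lambda^0=\Gamma^0$ possibly nontrivial, and the Jacobian of $\bar c_\gamma$ on $H/\Lambda$ only sees the induced map on $\mathfrak h/\mathrm{Lie}(\Gamma^0)$, so the volume argument only yields $|\det|=1$ on that quotient.

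The gap is in your final step, where you claim the leftover factor $|\det(\mathrm{Ad}(\gamma)|_{\mathrm{Lie}(\Gamma^0)})|=1$ "by the same volume-preservation argument applied to $\Gamma^0$." That argument has no finite measure to work with: Haar measure on a noncompact $\Gamma^0$ is infinite, and uniqueness of Haar only says that the automorphism $c_\gamma$ scales it by some positive constant; nothing forces the constant to be $1$, and unimodularity of $\Gamma^0$ (your "bi-invariant measure") is both unjustified in general and irrelevant to this point. Concretely, take $G$ the $ax+b$ group, $\Gamma=\{(2^n,b):n\in\mathbb{Z},\,b\in\mathbb{R}\}$, so $\Gamma^0=N$ is the (abelian, hence unimodular) translation subgroup, normal in $G$, and $G/\Gamma$ is compact with a finite $G$-invariant measure; the element $\gamma=(2,0)\in\Gamma$ normalizes $\Gamma^0$ yet conjugation scales its Haar measure by $2$, i.e.\ $|\det(\mathrm{Ad}(\gamma)|_{\mathrm{Lie}(\Gamma^0)})|=2$. (In this example $H=\Gamma^0$ lies in $\mathcal H$, $\gamma\in\Gamma_H$, and $\eta_H(\gamma)=2p_H$, so the inclusion you are trying to prove genuinely uses more than the standing hypotheses of this section.) The correct way to supply the missing factor in the paper's context is to use the reductions made at the start of \cref{proof of main theorems}: there $G$ is semisimple, or at least generated by algebraic unipotent one-parameter subgroups, so the continuous homomorphism $g\mapsto|\det(\mathrm{Ad}(g)|_{\mathrm{Lie}(\Gamma^0)})|$ from $G$ to $\mathbb{R}_{>0}$ is trivial (its kernel contains every unipotent one-parameter subgroup); alternatively one works with discrete $\Gamma$ as in Dani--Margulis. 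Without some such input, your last sentence does not go through.
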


Let $H\in \mathcal{H}$ and $Y$ be a subset of $G$. Let $\pi_Y$ be the natural quotient map $\pi_Y:Y\Gamma_H/\Gamma_H \to Y\Gamma/\Gamma$. Following \cite{dani1993limit}, $y\in Y$ is said to be a point of $(H,\Gamma)$-self intersection of $Y$ if $\pi_Y^{-1}(y\Gamma)$ has more than one point, that is, there exists $\gamma \in \Gamma-\Gamma_H$ such that $y\gamma \in Y$.

\begin{proposition}(\cite[Proposition 3.3]{dani1993limit})\label{proposition self intersection points are in proper subvarieties}
Let $H\in \mathcal{H}$, then the set of $(H,\Gamma)$-self intersection points of $N(H,W)$ is contained in $S(H,W)$.
\end{proposition}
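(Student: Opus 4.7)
The plan is to realize the self-intersection point $y$ as an element of $N(F,W)$ for some $F\in\mathcal{H}$ strictly contained in $H$, placing $y$ inside $S(H,W)$.

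First I would unwind the hypothesis. Since $y,y\gamma\in N(H,W)$, the containments $W\subset yHy^{-1}$ and $W\subset y\gamma H\gamma^{-1}y^{-1}$ conjugate, upon setting $U:=y^{-1}Wy$, to
\[
U\subset H\cap\gamma H\gamma^{-1},
\]
with $U$ generated by one-parameter unipotent subgroups of $G$. Since $\gamma\in\Gamma\setminus\Gamma_H=\Gamma\setminus(N(H)\cap\Gamma)$, we have $\gamma\notin N(H)$, so $\gamma H\gamma^{-1}\ne H$; as these are closed connected subgroups of equal dimension, their intersection has strictly smaller dimension than $H$. Let $L:=(H\cap\gamma H\gamma^{-1})^{0}$, the identity component. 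Then $L\subsetneq H$, and $U\subset L$ by connectedness of $U$. Further, $\Gamma^{0}\subset L$ because $\Gamma^{0}$ is connected and normal in $G$, hence lies in both $H$ and $\gamma H\gamma^{-1}$.

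Next I would produce $F$ by applying Ratner's orbit closure theorem inside $H/(H\cap\Gamma)$, which carries a finite $H$-invariant measure since $H\in\mathcal{H}$. The subgroup $U$ is generated by one-parameter unipotent subgroups of $H$, so Ratner's theorem yields a closed connected $F\subset H$ with $U\subset F$ such that $F\cdot e(H\cap\Gamma)$ is the closure of the $U$-orbit of $e(H\cap\Gamma)$, is closed in $H/(H\cap\Gamma)$, supports a unique $F$-invariant probability measure, and the unipotent part of $F$ acts ergodically. Replacing $F$ by $F\Gamma^{0}$ (still a closed connected subgroup of $H$ since $\Gamma^{0}$ is closed, connected, and normal in $G$; the orbit closure in $H/(H\cap\Gamma)$ is unaffected because $\Gamma^{0}\subset H\cap\Gamma$) we may assume $\Gamma^{0}\subset F$, so that $F\in\mathcal{H}$.

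The key remaining step is to verify $F\subsetneq H$. If $F=H$, the $U$-orbit of $e(H\cap\Gamma)$ would be dense in $H/(H\cap\Gamma)$; yet $U\subset L$ confines this orbit to the image of $L$, an immersed submanifold of strictly smaller dimension $\dim L-\dim\Gamma^{0}<\dim H-\dim\Gamma^{0}$. To reach a contradiction I would invoke the algebraic structure provided by the exterior-power representation: by Lemma \ref{lemma stablizer in Gamma H}, $\gamma p_H\ne\pm p_H$, so $\eta_H(y)=yp_H$ and $\eta_H(y\gamma)=y\gamma p_H$ are two distinct $W$-fixed points of the $W$-invariant algebraic variety $A_H$. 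Combined with Theorem \ref{discrete orbit} (discreteness of $\Gamma\cdot p_H$), this lets one construct a proper $F\in\mathcal{H}$ containing $y^{-1}Wy$ directly, yielding $W\subset yFy^{-1}$ and hence $y\in N(F,W)\subset S(H,W)$.

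The main obstacle is the last step. A naive Baire-category or Hausdorff-dimension argument does not suffice, since countable (let alone uncountable) unions of positive-codimension closed subsets can be topologically dense — witness $\mathbb{Q}\subset\mathbb{R}$, or irrational line-closures on a torus. The resolution relies on the real-analytic, and ultimately algebraic, rigidity built into Ratner's theorem and into the linearization framework of this section: the orbit closure $F\cdot e(H\cap\Gamma)$ is not merely a closed set but a closed real-analytic homogeneous submanifold, and its confinement to the lower-dimensional $L$-orbit must be reconciled via the algebraic variety $A_H$ and the discreteness of $\Gamma\cdot p_H$.
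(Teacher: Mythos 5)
The paper does not actually reprove this proposition (it cites Dani--Margulis and only remarks that the argument survives in the present setting), so the question is whether your argument is complete on its own; it is not. Your reduction is the standard one: set $U=y^{-1}Wy\subset H\cap\gamma H\gamma^{-1}$, let $F$ be the group furnished by Ratner's orbit closure theorem with $\overline{U\bar e}=F\bar e$, adjust by $\Gamma^0$ so that $F\in\mathcal H$, and note $F\subset H$ because the orbit $H\Gamma/\Gamma$ is closed. But the entire content of the proposition sits in the step you leave open, namely $F\subsetneq H$, and your last paragraph is an acknowledgement of the difficulty rather than an argument: ``this lets one construct a proper $F\in\mathcal H$ containing $y^{-1}Wy$ directly'' is not backed by any construction, and the mere existence of two distinct $W$-fixed vectors $yp_H\neq y\gamma p_H$ in $A_H$ does not by itself produce a proper subgroup of class $\mathcal H$ containing $U$. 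As written this is a genuine gap, located exactly at the heart of the statement.

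Moreover, your dismissal of Baire category is aimed at the wrong object. It is true that one cannot rule out density of the image of $L=(H\cap\gamma H\gamma^{-1})^0$ by dimension counting, but that is not the set to which the category argument should be applied. Suppose $F=H$, i.e. $\overline{U\bar e}=H\Gamma/\Gamma$. Since $U\subset\gamma H\gamma^{-1}$ and $\gamma H\gamma^{-1}\Gamma/\Gamma=\gamma\,(H\Gamma/\Gamma)$ is closed, this forces $H\Gamma/\Gamma\subset\gamma H\Gamma/\Gamma$, i.e. $H\subset\bigcup_{\gamma''\in\Gamma}\gamma H\gamma''$. The distinct sets $\gamma H\gamma''$ are indexed by $(H\cap\Gamma)\backslash\Gamma$, which is countable because $\Gamma^0\subset H$ and $\Gamma/\Gamma^0$ is countable, and each $H\cap\gamma H\gamma''$ is closed in $H$. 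By Baire, some $H\cap\gamma H\gamma''$ has nonempty interior in $H$; translating an interior point $x=\gamma h_0\gamma''$ shows that a neighborhood of $e$ in $H$ lies in $(\gamma'')^{-1}H\gamma''$, hence $H=(\gamma'')^{-1}H\gamma''$ by connectedness and equality of dimension, so $\gamma''\in N(H)\cap\Gamma=\Gamma_H$; then $\gamma=x(\gamma'')^{-1}h_0^{-1}\in N(H)$, so $\gamma\in N(H)\cap\Gamma=\Gamma_H$, contradicting Lemma \ref{lemma stablizer in Gamma H} and the hypothesis $\gamma\notin\Gamma_H$. Thus the countability-plus-Baire argument applied to the inclusion of the two \emph{closed orbits} (not to the image of $L$) is precisely what closes the proof; the discreteness of $\Gamma\cdot p_H$ and the variety $A_H$, which you invoke vaguely, are not needed for this step.
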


\begin{corollary}(\cite[Corollary 3.5]{dani1993limit})\label{cor no self intersection points}
Let $H\in \mathcal{H}$ and let $A_H$ be the Zariski closure of $\eta_H(N(H,W))$ in $V_H$. Let $D$ be a compact subset of $A_H$. Let $Y_H$ be the set of $(H,\Gamma)$-self intersection points of $\eta_H^{-1}(D)$ and let $K_1$ be a compact subset of $G-Y_H\Gamma$. Then there exists a neighborhood $\Phi$ of $D$ in $V_H$ such that the quotient map from $(\eta_H^{-1}(\Phi)\Gamma_H \cap K_1\Gamma)/\Gamma_H$ onto $(\eta_H^{-1}(\Phi)\Gamma\cap K_1\Gamma)/\Gamma$ is injective.
\end{corollary}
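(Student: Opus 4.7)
The plan is to argue by contradiction. Suppose no neighborhood $\Phi$ with the desired property exists; choose a nested sequence of open neighborhoods $\Phi_n \downarrow D$ with $\overline{\Phi_1}$ compact, for each of which the quotient map fails to be injective. Unpacking this failure and absorbing the $\Gamma_H$-factors on the right produces, for every $n$, elements $\tilde g_1^n, \tilde g_2^n \in \eta_H^{-1}(\Phi_n) \cap K_1\Gamma$ and $\gamma_n\prm \in \Gamma \setminus \Gamma_H$ with $\tilde g_1^n = \tilde g_2^n \gamma_n\prm$. The task is to extract a limit point $h$ that is an $(H,\Gamma)$-self-intersection point of $\eta_H^{-1}(D)$ and simultaneously lies in $K_1\Gamma$, contradicting $K_1 \cap Y_H\Gamma = \emptyset$.

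The core step is a two-fold application of the discreteness of $\Gamma\cdot p_H \subset V_H$ given by Theorem \ref{discrete orbit}. Write $\tilde g_2^n = k_n \lambda_n$ with $k_n \in K_1$, $\lambda_n \in \Gamma$. After passing to a subsequence, $k_n \to k \in K_1$ and, using compactness of $\overline{\Phi_1}$ and closedness of $D$, $\tilde g_2^n \cdot p_H \to v_2 \in D$. Then $\lambda_n \cdot p_H = k_n^{-1}(\tilde g_2^n \cdot p_H) \to k^{-1}v_2$, so by discreteness of $\Gamma\cdot p_H$ the values $\lambda_n \cdot p_H$ are eventually equal to a single vector $u := k^{-1}v_2$. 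Fixing one such $\mu_2 := \lambda_{n_0}$, this yields $\lambda_n = \mu_2 \delta_n$ with $\delta_n \in \mathrm{Stab}_\Gamma(p_H) \subseteq \Gamma_H$ (the inclusion coming from Lemma \ref{lemma stablizer in Gamma H}). Applying the same argument to the bounded sequence $(\mu_2\delta_n\gamma_n\prm)\cdot p_H = (k_n\mu_2)^{-1}(\tilde g_1^n \cdot p_H)$, which tends to $k^{-1}v_1$ for some $v_1 \in D$, I obtain (on a further subsequence) a fixed $\nu \in \Gamma$ with $\nu\cdot p_H = k^{-1}v_1$ and elements $\sigma_n \in \mathrm{Stab}_\Gamma(p_H)$ such that $\mu_2\delta_n\gamma_n\prm = \nu\sigma_n$. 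Setting $\mu_1 := \mu_2^{-1}\nu \in \Gamma$, this becomes $\gamma_n\prm = \delta_n^{-1}\mu_1\sigma_n$; since $\delta_n,\sigma_n \in \Gamma_H$ while $\gamma_n\prm \notin \Gamma_H$, I conclude $\mu_1 \in \Gamma \setminus \Gamma_H$.

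The contradiction is now immediate. Let $h := k\mu_2$; then $h \in K_1\Gamma$ and $h\cdot p_H = k\cdot u = v_2 \in D$, while $h\mu_1 = k\nu$ satisfies $h\mu_1\cdot p_H = k\cdot(k^{-1}v_1) = v_1 \in D$. Thus $h$ and $h\mu_1$ both lie in $\eta_H^{-1}(D)$ with $\mu_1 \in \Gamma\setminus\Gamma_H$, so $h \in Y_H$. Writing $h = k\mu_2$ gives $k = h\mu_2^{-1} \in Y_H\Gamma$, contradicting $K_1 \subset G \setminus Y_H\Gamma$.

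The main obstacle, and the reason for the one piece of care in the argument, is the sign ambiguity in Lemma \ref{lemma stablizer in Gamma H}: elements of $\Gamma_H$ send $p_H$ only to $\pm p_H$. If I only pinned down $\lambda_n$ and $\mu_2\delta_n\gamma_n\prm$ up to $\Gamma_H$-multiplication, the resulting point $h\mu_1\cdot p_H$ could land in $-D$ rather than $D$, which would break the self-intersection conclusion. Passing to the strict stabilizer $\mathrm{Stab}_\Gamma(p_H)$ (still a subset of $\Gamma_H$, so all the $\Gamma_H$-conclusions persist) is precisely what forces $v_1 \in D$ and not merely $\pm v_1 \in \pm D$. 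Beyond this, everything is a routine diagonal-subsequence bookkeeping using the compactness of $K_1$ and $\overline{\Phi_1}$, together with the discreteness of $\Gamma\cdot p_H$.
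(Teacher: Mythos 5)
The paper itself gives no proof of this corollary—it is imported from Dani--Margulis—and your argument is a correct reconstruction along the standard lines of that reference: shrinking neighborhoods of $D$, discreteness and closedness of $\Gamma\cdot p_H$ (Theorem \ref{discrete orbit}), and the characterization of $\Gamma_H$ (Lemma \ref{lemma stablizer in Gamma H}), producing a point of $(H,\Gamma)$-self intersection of $\eta_H^{-1}(D)$ whose $\Gamma$-translate meets $K_1$, the desired contradiction. The only blemishes are cosmetic: since $\tilde g_1^n=k_n\mu_2\delta_n\gamma_n'$, the displayed identity should read $(\mu_2\delta_n\gamma_n')\cdot p_H=k_n^{-1}(\tilde g_1^n\cdot p_H)$ rather than $(k_n\mu_2)^{-1}(\tilde g_1^n\cdot p_H)$, which equals $(\delta_n\gamma_n')\cdot p_H$ (the subsequent conclusions are the ones that follow from the corrected identity), and the $\Phi_n$ should be taken, say, as $\{v\in V_H: d(v,D)<1/n\}$ intersected with a bounded neighborhood, so that any convergent sequence $v_n\in\Phi_n$ has its limit in $D$.
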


We also need the following proposition describing the intersection of compact subsets of singular sets:

\begin{proposition}(cf. \cite[Proposition 7.2]{dani1993limit})\label{proposition intersection of compact subsets of singular subvarieties}
Let $H\in \mathcal{H}$ and $D$ be a compact subset of $A_H$. Let $K$ be a compact subset of $G/\Gamma$. Then $\{K\cap (\eta_H^{-1}(D)\cap \eta_H^{-1}(D)\gamma)\Gamma/\Gamma\}_{\gamma\in\Gamma}$ is a family of sets with only finitely many distinct elements. Furthermore, for each $\gamma\in \Gamma$, there exists a compact subset $C_{\gamma}$ of $\eta_H^{-1}(D)\cap \eta_H^{-1}(D)\gamma$ such that 
\begin{align*}
    K\cap (\eta_H^{-1}(D)\cap \eta_H^{-1}(D)\gamma)\Gamma/\Gamma=C_{\gamma}\Gamma/\Gamma
\end{align*}
\end{proposition}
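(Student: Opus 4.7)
The plan is to exploit the discreteness of the orbit $\Gamma \cdot p_H$ in $V_H$ guaranteed by Theorem \ref{discrete orbit}, together with the coset description in Lemma \ref{lemma stablizer in Gamma H}. First I would choose a compact lift $\tilde{K} \subset G$ of $K$ via local cross-sections of $\pi: G \to G/\Gamma$. Then $\tilde{K}^{-1}\cdot D = \{g^{-1}\cdot v : g\in \tilde K,\, v\in D\}$ is compact in $V_H$, so its intersection with the discrete set $\Gamma \cdot p_H$ is a finite set $F = \{q_1, \ldots, q_m\}$. Fix representatives $\hat\gamma_i \in \Gamma$ with $\hat\gamma_i \cdot p_H = q_i$, and write $\Gamma_H^+ = \{\gamma \in \Gamma_H : \eta_H(\gamma) = p_H\}$; by Lemma \ref{lemma stablizer in Gamma H}, $\{\gamma \in \Gamma : \gamma\cdot p_H = q_i\} = \hat\gamma_i \Gamma_H^+$.

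Next, set $S_\gamma := K \cap (\eta_H^{-1}(D)\cap \eta_H^{-1}(D)\gamma)\Gamma/\Gamma$ and suppose $S_\gamma \neq \emptyset$. Picking $x\Gamma \in S_\gamma$ with $x \in \tilde K$, there exists $\gamma_0 \in \Gamma$ with $x\gamma_0 \in \eta_H^{-1}(D)\cap \eta_H^{-1}(D)\gamma$. The two membership conditions translate to $\gamma_0\cdot p_H \in x^{-1}D \subset \tilde K^{-1}D$ and $\gamma_0\gamma^{-1}\cdot p_H \in \tilde K^{-1}D$; hence both lie in $F$, forcing $\gamma_0 \in \hat\gamma_i\Gamma_H^+$ and $\gamma_0\gamma^{-1} \in \hat\gamma_j\Gamma_H^+$ for some indices $i,j$. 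Consequently $\gamma \in \Gamma_H^+\, \hat\gamma_j^{-1}\hat\gamma_i\, \Gamma_H^+$, one of at most $m^2$ double cosets. Moreover, since $\eta_H^{-1}(D)\gamma_H = \eta_H^{-1}(D)$ for every $\gamma_H \in \Gamma_H^+$, left multiplication of $\gamma$ by $\Gamma_H^+$ does not change $\eta_H^{-1}(D)\gamma$, while right multiplication by $\gamma_H \in \Gamma_H^+ \subset \Gamma$ only translates $\eta_H^{-1}(D)\cap \eta_H^{-1}(D)\gamma$ by an element of $\Gamma$, leaving its image in $G/\Gamma$ fixed. Thus $S_\gamma$ depends only on the double coset $\Gamma_H^+\gamma\Gamma_H^+$, so only finitely many distinct sets $S_\gamma$ arise.

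For the compact subset $C_\gamma$: note that $S_\gamma$ is closed in the compact set $K$, hence itself compact. The first candidate is $C_\gamma := \bigl(\bigcup_{i=1}^{m} \tilde K \hat\gamma_i\bigr) \cap (\eta_H^{-1}(D)\cap \eta_H^{-1}(D)\gamma)$, a compact intersection of the compact union of translates with a closed set. The \emph{main obstacle} lies in verifying that $\pi(C_\gamma) = S_\gamma$: given $x\Gamma \in S_\gamma$ with $x\in \tilde K$, one has $x\gamma_0 \in \eta_H^{-1}(D)\cap \eta_H^{-1}(D)\gamma$ with $\gamma_0 = \hat\gamma_i\gamma_H$ for some $\gamma_H \in \Gamma_H^+$; while the element $x\hat\gamma_i = (x\gamma_0)\gamma_H^{-1}$ still lies in $\eta_H^{-1}(D)$ (since $\gamma_H\cdot p_H = p_H$), it may fail to lie in $\eta_H^{-1}(D)\gamma$ because $\gamma$ is not in general normalized by $\Gamma_H^+$. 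I would resolve this either by enlarging the finite family $\{\hat\gamma_i\}$ to include auxiliary right-translates that adjust $\gamma$ to its conjugate $\gamma_H\gamma\gamma_H^{-1}$, or, more cleanly, by invoking Corollary \ref{cor no self intersection points} together with a finite local-section patching argument: for each $x \in S_\gamma$, choose $g_x \in \eta_H^{-1}(D)\cap \eta_H^{-1}(D)\gamma$ with $\pi(g_x)=x$ and a local cross-section of $\pi$ through $g_x$; finite compactness of $S_\gamma$ then lets us patch finitely many such sections into a compact subset of $\eta_H^{-1}(D)\cap \eta_H^{-1}(D)\gamma$ projecting onto $S_\gamma$. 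This compactness-selection step, which crucially uses that the fibers of $\pi$ over $S_\gamma$ inside $\eta_H^{-1}(D)\cap \eta_H^{-1}(D)\gamma$ are discrete by the discreteness of $\Gamma\cdot p_H$, is the most delicate part of the argument.
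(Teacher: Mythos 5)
Your first half is fine: the observation that a nonempty $K\cap(\eta_H^{-1}(D)\cap\eta_H^{-1}(D)\gamma)\Gamma/\Gamma$ forces $\gamma_0\cdot p_H$ and $\gamma_0\gamma^{-1}\cdot p_H$ into the finite set $F=\Gamma\cdot p_H\cap \tilde K^{-1}D$ (using Theorem \ref{discrete orbit}), and that the set $S_\gamma$ only depends on the double coset $\Gamma_H^+\gamma\Gamma_H^+$, correctly proves the finiteness claim; this is exactly the mechanism the paper has in mind when it says the statements of \cite{dani1993limit} persist for closed $\Gamma$ because only the discreteness of $\eta_H(\Gamma)$ is used.

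The second half, however, has a genuine gap, and the two repairs you sketch do not close it. Your "cleaner" route fails on two counts: the fibers of $\pi$ over $S_\gamma$ inside $Y:=\eta_H^{-1}(D)\cap\eta_H^{-1}(D)\gamma$ are \emph{not} discrete in this paper's setting, because $\Gamma$ is only closed and $\Gamma^0\subset H$ stabilizes $p_H$ (so whole $\Gamma^0$-cosets sit in each fiber); and even when fibers are discrete, a local cross-section of $\pi$ through a chosen $g_x\in Y$ need not map nearby points of $\pi(Y)$ into $Y$ (points $y_n\to x$ in $\pi(Y)$ may only admit representatives in $Y$ far from $g_x$), so "patching finitely many sections" does not produce a subset of $Y$ surjecting onto $S_\gamma$. (Your side remark that $S_\gamma$ is closed, hence compact, is likewise unproved at that stage -- it is part of what the proposition delivers.) The correct completion is a refinement of your first fix: index representatives by \emph{pairs} of conditions rather than single ones. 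For each pair $(q_i,q_j)\in F\times F$ for which some $\gamma_0\in\Gamma$ satisfies $\gamma_0\cdot p_H=q_i$ and $\gamma_0\gamma^{-1}\cdot p_H=q_j$, fix one such solution $\delta_{ij}$. Any other solution is $\gamma_0=\delta_{ij}s$ with $s$ in the joint stabilizer $\mathrm{Stab}_\Gamma(p_H)\cap\mathrm{Stab}_\Gamma(\gamma^{-1}\cdot p_H)$, and since $s$ fixes \emph{both} vectors, $x\gamma_0\in Y$ implies $x\delta_{ij}\in Y$. Hence $C_\gamma:=Y\cap\bigcup_{i,j}\tilde K\delta_{ij}$ is compact and satisfies $\pi(C_\gamma)=S_\gamma$. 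The point you missed is that the ambiguity must be absorbed into the stabilizer of the pair $(p_H,\gamma^{-1}\cdot p_H)$, not of $p_H$ alone; with only $\Gamma_H^+$ you indeed lose membership in $\eta_H^{-1}(D)\gamma$, as you noticed.
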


The following proposition can be used to study higher dimensional polynomials trajectories near singular sets.

\begin{proposition}\label{behavior of polynomial}
Let $H\in \mathcal{H}$. Let a compact set $C\subset A_H$ and an $\epsilon>0$ be given. Then there exists a compact set $D\subset A_H$ such that $C\subset D$, and for any neighborhood $\Phi$ of $D$ in ${V}_H$, there exists a neighborhood $\Psi$ of $C$ in ${V}_H$ with $\Psi \subset \Phi$ such that for any $\Theta\in \classP$ and any box $B \subset \bR^k$, if there exists $\boldsymbol{a} \in  B$ and some $v_0\in {V}_H$ with $\Theta(\boldsymbol{a})v_0 \notin \Phi$, then
\begin{equation}\label{relative size}
    |\{\vx \in B:\Theta(\vx)v_0 \in \Psi\}|\leq \epsilon|\{\vx\in B:\Theta(\vx)v_0 \in \Phi\}|
\end{equation}
\end{proposition}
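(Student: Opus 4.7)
The plan is to adapt the one-dimensional argument of \cite[Proposition 5.3]{shah1994limit} to higher dimensions by combining the $(C,\alpha)$-good property of polynomials (Lemma~\ref{polynomial is good}) with a Besicovitch-type covering argument in $\bR^k$. Fix a Euclidean norm $\|\cdot\|$ on $V_H$ and set $l' := d\,l$ (with $d := \dim \mathfrak{h}$), so that for any $\Theta \in \classP$ and $v \in V_H$ every coordinate of $\vx \mapsto \Theta(\vx) v$ is a polynomial of total degree at most $l'$; in particular each quadratic function $f_c(\vx) := \|\Theta(\vx) v_0 - c\|^2$ is a polynomial of degree at most $2l'$ and, by Lemma~\ref{polynomial is good}, is $(C_0,\alpha_0)$-good on every box in $\bR^k$ with constants depending only on $k$ and $l$.

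The compact set $D$ is chosen from $A_H$ to contain $C$ with enough slack that, given any open neighborhood $\Phi \supset D$, the separation $\rho := \mathrm{dist}(D, V_H \setminus \Phi) > 0$ can be matched by a suitable neighborhood $\Psi$ of $C$. Since the inequality with $\Phi$ shrunk to $\{v : \mathrm{dist}(v, D) < \rho\}$ implies the one with the original $\Phi$, we may assume $\Phi = \{v : \mathrm{dist}(v, D) < \rho\}$. We then define $\Psi := \{v : \mathrm{dist}(v, C) < \sigma\}$ and fix a finite cover $\{B(c_j, r_0)\}_{j=1}^{N}$ of $C$ with centers $c_j \in C$; the parameters $r_0, \sigma > 0$ will be chosen (depending on $\rho$ and $\epsilon$) to absorb $N$ in the estimate below.

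For each $\vx^* \in E := \{\vx \in B : \Theta(\vx) v_0 \in \Psi\}$, after translating $B = \prod_{i=1}^k [0, T_i]$, define the homothetic rescaling of $B$ centered at $\vx^*$,
\begin{equation*}
B_{\vx^*}(\lambda) := \prod_{i=1}^k \bigl[x_i^* - \tfrac{\lambda T_i}{2},\, x_i^* + \tfrac{\lambda T_i}{2}\bigr],
\end{equation*}
and let $\lambda^*_{\vx^*} := \sup\{\lambda > 0 : \Theta(B_{\vx^*}(\lambda)) v_0 \subset \Phi\}$. Since $B_{\vx^*}(2) \supset B$ for every $\vx^* \in B$, the existence of $\boldsymbol{a} \in B$ with $\Theta(\boldsymbol{a}) v_0 \notin \Phi$ forces $\lambda^*_{\vx^*} < 2$, and an elementary coordinate-wise computation then gives $|B_{\vx^*}(\lambda^*_{\vx^*}) \cap B| \ge 2^{-k} |B_{\vx^*}(\lambda^*_{\vx^*})|$. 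By maximality and continuity there exists $\vy^* \in \partial B_{\vx^*}(\lambda^*_{\vx^*})$ with $\Theta(\vy^*) v_0 \in \partial \Phi$, so $\|\Theta(\vy^*) v_0 - c_j\| \ge \rho$ for every $j$, whence $\|f_{c_j}\|_{B_{\vx^*}(\lambda^*_{\vx^*})} \ge \rho^2$. Using $E \cap B_{\vx^*}(\lambda^*_{\vx^*}) \subset \bigcup_{j=1}^{N} \{\vx \in B_{\vx^*}(\lambda^*_{\vx^*}) : f_{c_j}(\vx) < (r_0 + \sigma)^2\}$ and the goodness of each $f_{c_j}$ yields
\begin{equation*}
|E \cap B_{\vx^*}(\lambda^*_{\vx^*})| \le N\, C_0 \Bigl(\tfrac{(r_0 + \sigma)^2}{\rho^2}\Bigr)^{\alpha_0} |B_{\vx^*}(\lambda^*_{\vx^*})|.
\end{equation*}
We choose $r_0, \sigma$ so that the right-hand constant is at most $\epsilon/(c_k 2^k)$, where $c_k$ is the Besicovitch constant in $\bR^k$.

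Because all the boxes $B_{\vx^*}(\lambda^*_{\vx^*})$ are homothets of the fixed box $B$, Besicovitch's covering theorem supplies a countable subfamily $\{B_{\vx_i^*}(\lambda^*_i)\}_i$ covering $E$ with overlap at most $c_k$. Summing the local bounds, using the volume ratio $|B_{\vx^*}(\lambda^*_{\vx^*})| \le 2^k |B_{\vx^*}(\lambda^*_{\vx^*}) \cap B|$, and observing that $\bigcup_i (B_{\vx_i^*}(\lambda^*_i) \cap B) \subset \{\vx \in B : \Theta(\vx) v_0 \in \Phi\}$, we obtain $|E| \le \epsilon\, |\{\vx \in B : \Theta(\vx) v_0 \in \Phi\}|$. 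The principal obstacle is the calibration of $r_0$ and $\sigma$ against $\rho$: because $\rho$ is only determined after $\Phi$ is given, the covering of $C$ used in $\Psi$ must be chosen at the $\Psi$-stage (not with $D$), and the bound $N \bigl((r_0 + \sigma)/\rho\bigr)^{2\alpha_0} \le \epsilon/(c_k C_0 2^k)$ has to be achievable for every $\rho > 0$ arising from a neighborhood of the chosen $D$; this is the point at which the specific construction of $D$---and the interaction between the $V_H$-geometry of $C$ and the good exponent $\alpha_0$---enters in an essential way.
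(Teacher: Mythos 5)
Your covering-plus-goodness skeleton (maximal boxes on which the trajectory stays in $\Phi$, Besicovitch overlap $N_k$, comparison of $|E\cap \text{box}|$ with $|\text{box}\cap B|$) is essentially the same as the paper's, but the proposal has a genuine gap exactly where you flag it, and that gap is fatal for your calibration. You never construct $D$, and the quantifier order in the statement (``there exists a compact $D$ such that for \emph{any} neighborhood $\Phi$ of $D$ \dots'') is precisely the hard point. In your scheme the only quantitative input on each maximal box is that some boundary point is at distance $\geq \rho$ from every center $c_j\in C$, so the goodness estimate forces $N\,C_0\bigl((r_0+\sigma)/\rho\bigr)^{2\alpha_0}\leq \epsilon/(c_k2^k)$. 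Since the constant on the right is $<1$ while $N\geq 1$, this requires $r_0+\sigma<\rho$; but then, as $\Phi$ thins ($\rho\to 0$), $r_0$ must shrink at least proportionally to $\rho$, so $N=N(r_0)$ grows like a covering number of $C$ at scale $\lesssim\rho$, i.e.\ like $\rho^{-d_0}$ with $d_0$ the box dimension of $C$, whereas the gain $\bigl((r_0+\sigma)/\rho\bigr)^{2\alpha_0}$ is at best a bounded constant. Because $\alpha_0$ coming from Lemma \ref{polynomial is good} is tiny ($2\alpha_0<d_0$ in any nontrivial case), the product $N\bigl((r_0+\sigma)/\rho\bigr)^{2\alpha_0}$ cannot be made uniformly small over all $\rho>0$ unless $C$ is finite. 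The deeper reason is that the functions $f_{c_j}(\vx)=\|\Theta(\vx)v_0-c_j\|^2$ only see the finite set $\{c_j\}\subset C$: they are already large on most of $D$ and on $A_H$ near $D$, so they cannot distinguish ``the trajectory has genuinely escaped the neighborhood of the variety'' from ``the trajectory is drifting along $A_H$ away from $C$'', which is the whole content of the proposition (and the reason the statement would be false with $D=C$).

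The paper's device, which is the missing idea, is to use the algebraic structure of $A_H$: write $A_H=\{v\in V_H: P(v)=0\}$ for a polynomial $P$ of bounded degree, and work with the two polynomial functions $\varphi_1(\vx)=\delta\|\Theta(\vx)v_0\|^2/(r+\beta)^2$ and $\varphi_2(\vx)=P(\Theta(\vx)v_0)/\beta$, whose maximum $\varphi$ is $\good$ on every box. One takes $r$ with $C\subset B_r(0)$, sets $\delta=(c^{-1}\epsilon N_k^{-1})^{1/\alpha}$ and $D=\{v\in A_H:\|v\|\leq r/\sqrt{\delta}\}$; given an arbitrary neighborhood $\Phi$ of $D$ one picks $\beta>0$ with $\{\|v\|<(r+\beta)/\sqrt{\delta},\,|P(v)|<\beta\}\subset\Phi$ and sets $\Psi=\{\|v\|<r+\beta,\,|P(v)|<\beta\delta\}$. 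Because $P$ vanishes on the entire variety (so on all of $D$, not just on $C$), the thinness of $\Phi$ enters only through the free parameter $\beta$, which cancels in the threshold ratio: one gets the clean inclusions $\{\vx: \Theta(\vx)v_0\in\Psi\}\subset\{\varphi<\delta\}$ and $\{\varphi<1\}\subset\{\vx:\Theta(\vx)v_0\in\Phi\}$, so a single application of the $\good$ property on each Besicovitch cube gives the factor $c\delta^{\alpha}N_k=\epsilon$ uniformly in $\Phi$; the enlargement of $D$ by the factor $1/\sqrt{\delta}$ in the norm direction is what handles escape by growth of $\|\Theta(\vx)v_0\|$, while the $P$-part handles transverse escape from the variety. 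With this choice of $D$, $\Psi$, and good function, your Besicovitch argument goes through essentially as you wrote it (the paper runs it with cubes centered in $B\setminus E$, chosen smallest so that $\sup_{\cube(\vx)\cap B}|\varphi|=1$, rather than with homothets of $B$, but that difference is cosmetic).
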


\begin{remark}\label{remark relative size}
Given a compact set $C\subset A_H$ and $\epsilon>0$, let $D$ be the compact set obtained in Proposition \ref{behavior of polynomial} containing $C$. Following \cite{dani1993limit}, we say that $C$ is of relative size less than $\epsilon$ in $D$.
\end{remark}

Here we incorporate $\good$ property with Besicovitch's covering theorem to prove the above proposition.

\begin{theorem}(\cite[Theorem 2.7]{mattila1999geometry})\label{besicovich}
There is an integer $N_k$ depending only on $k$ with the following property: let  $A$ be a bounded subset of $\bR^k$ and let $C$ be a family  of nonempty cubes in $\bR^k$ such that each $\vx\in A$ is the center of some cube belongs to $C$; 
then there exists a finite or countable subfamily $\{\cube(i)\}$ of C with $
1_A \le \sum_i 1_{\cube(i)} \le N_{k}$.
\end{theorem}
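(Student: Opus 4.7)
The strategy is a covering argument combining the $(C_0,\alpha)$-good property of the polynomial coordinates of $\vx \mapsto \Theta(\vx) v_0 \in V_H$ (which are $(C_0,\alpha)$-good on any box by Lemma \ref{polynomial is good}) with Besicovitch's covering theorem (Theorem \ref{besicovich}). The geometric picture is: around each $\vx_0$ with $\Theta(\vx_0) v_0 \in \Psi$, I locate a cube $\cube_{\vx_0}$ centered at $\vx_0$ on which $\Theta(\cdot) v_0$ is barely confined to $\Phi$---touching $\partial \Phi$ somewhere on $\partial \cube_{\vx_0}$; the good property applied to a coordinate polynomial that witnesses this large oscillation forces the subset where $\Theta(\cdot) v_0 \in \Psi$ to have small relative measure in $\cube_{\vx_0}$. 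Besicovitch then assembles the local estimates into a global bound.

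Concretely, I first pick $D$ to be a fixed compact neighborhood of $C$ inside $A_H$. Given an open $\Phi \supset D$ in $V_H$, set $\sigma := \mathrm{dist}_{V_H}(D, V_H \setminus \Phi) > 0$, and define $\Psi$ to be the $\rho$-neighborhood of $C$ in $V_H$, where $\rho$ is small compared to $\sigma$ and will be tuned at the end. To localize I cover $C$ by finitely many balls $B(c_j,\rho)$, $j=1,\dots,m$, and partition $E := \{\vx\in B : \Theta(\vx)v_0 \in \Psi\}$ as $E = \bigsqcup_{j=1}^m E_j$ so that for any $\vx, \vx_0 \in E_j$ every coordinate of $\Theta(\vx)v_0 - \Theta(\vx_0)v_0$ has absolute value at most $4\rho$. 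For each $\vx_0 \in E_j$, let $\cube_{\vx_0}$ be the largest open cube centered at $\vx_0$ with $\Theta(\cube_{\vx_0})v_0 \subset \Phi$; the hypothesis that some $\boldsymbol{a} \in B$ satisfies $\Theta(\boldsymbol{a})v_0 \notin \Phi$ keeps this cube bounded (side at most $2\,\mathrm{diam}_\infty(B)$), and by maximality some $\vx_1 \in \partial \cube_{\vx_0}$ has $\Theta(\vx_1)v_0 \in \partial \Phi$. Hence some coordinate $P_i$ of the polynomial map $\vx \mapsto \Theta(\vx)v_0$ satisfies $|P_i(\vx_1) - P_i(\vx_0)| \geq (\sigma - 2\rho)/\sqrt{N}$, where $N = \dim V_H$, and the good property applied to the polynomial $P_i - P_i(\vx_0)$ on $\cube_{\vx_0}$ yields
\[
    |E_j \cap \cube_{\vx_0}| \leq C_0 \left(\frac{4\rho\sqrt{N}}{\sigma - 2\rho}\right)^{\alpha} |\cube_{\vx_0}|.
\]

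Applying Besicovitch's theorem to the family $\{\cube_{\vx_0}\}_{\vx_0 \in E_j}$ yields a subfamily of overlap at most $N_k$, and summing the pointwise bound over this subfamily and then over $j$ reduces the task to controlling $|\bigcup \cube_{\vx_0}|$ by a constant multiple of $|F| := |\{\vx \in B : \Theta(\vx)v_0 \in \Phi\}|$. This comparison is the main obstacle, since a priori the cubes $\cube_{\vx_0}$ need not be contained in $B$. My plan is to split into ``interior'' points $\vx_0$ whose cubes lie in $B$ (so $\cube_{\vx_0} \subset F$ directly) and ``boundary'' points whose cubes exceed $B$; for the latter I replace each cube by its restriction $\cube_{\vx_0} \cap B \subset F$ and rerun the good-property estimate on this sub-box, bounding from below the oscillation of $P_i - P_i(\vx_0)$ on $\cube_{\vx_0}\cap B$ via the polynomial structure and the position of $\vx_0$ in the sub-box. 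After this bookkeeping, choosing $\rho$ sufficiently small (depending only on $C$, $D$, $\Phi$, and $m$, hence only on $C$ and $\epsilon$ once $D$ is fixed) yields the required estimate $|E| \leq \epsilon |F|$.
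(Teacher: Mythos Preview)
Your proposal is not a proof of the stated theorem. Theorem~\ref{besicovich} is Besicovitch's covering theorem, which the paper merely cites from \cite{mattila1999geometry} and does not prove. What you have written is instead an attempted proof of Proposition~\ref{behavior of polynomial}. Assuming that is your actual target, here is a comparison and a gap.

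\textbf{Comparison with the paper's argument.} The paper does not work with coordinate functions of $\Theta(\vx)v_0$ at all. Instead it uses the defining polynomial $P$ of the variety $A_H$ together with the norm to build a single function $\varphi(\vx)=\max(|\varphi_1(\vx)|,|\varphi_2(\vx)|)$, where $\varphi_1$ measures $\|\Theta(\vx)v_0\|$ and $\varphi_2$ measures $|P(\Theta(\vx)v_0)|$. The sets $\Phi$ and $\Psi$ are then sublevel sets of these two quantities at scales differing by a fixed factor $\delta$ chosen from $\epsilon$. The crucial gain is that the Besicovitch cubes are chosen as the \emph{smallest} cubes on which $\sup_{\cube\cap B}|\varphi|=1$; since the good property is applied directly on the box $\cube(i)\cap B$, the boundary issue you flag simply does not arise, and no partition of $E$ is needed.

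\textbf{The gap in your approach.} Your argument requires a partition $E=\bigsqcup_{j=1}^m E_j$ coming from a cover of $C$ by $\rho$-balls, so that on each $E_j$ a single coordinate witnesses both the smallness on $E_j$ and the largeness at the exit point. Summing your local estimate over $j$ produces a factor of $m$ in the final bound. But $m$ is the covering number of $C$ at scale $\rho$, which grows like $\rho^{-d}$ for some $d>0$, while the good-property factor decays only like $\rho^{\alpha}$ with $\alpha=1/(kl)$ typically much smaller than $d$. Hence $m\cdot\rho^\alpha$ need not tend to zero as $\rho\to 0$, and the argument does not close. The paper's use of the single function $\varphi$ built from the defining polynomial of $A_H$ is precisely what circumvents this: it reduces the comparison of $\Psi$ and $\Phi$ to a comparison of two sublevel sets of one $(C,\alpha)$-good function, with no auxiliary partition.
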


Note that the above covering theorem is stated for balls in \cite[Theorem 2.7]{mattila1999geometry}, but it also holds if balls are replaced by cubes.

\begin{proof}[proof of proposition \ref{behavior of polynomial}]
Our proof is similar to that of \cite[Proposition 4.2]{dani1993limit}. Since the proposition is trivial if $\epsilon \geq 1$, we can assume $\epsilon <1$. Denote 
\begin{align*}
    A_H=\{v\in {V}_H: P(v)=0\},
\end{align*}
where $P$ is a polynomial in some fixed basis of  ${V}_H$ of degree less than $m$. Fix some norm $\|\cdot\|$ on ${V}_H$. Let $r>0$ be such that $C\subset B_r(0) \subset {V}_H$, where $B_r(0)$ is the ball centered at origin with radius $r$. Let $\alpha=\frac{1}{2ml}$, c=$C_{2ml,k}$, where $C_{2ml,k}$ is as defined in Lemma \ref{polynomial is good}. Let $\delta=(c^{-1} \epsilon N_k^{-1})^{\frac{1}{\alpha}}$, where $N_k$ is the constant defined in Besicovitch's covering theorem. Define a compact set
\begin{align*}
    D=\{v\in A_H : \|v\|\leq \frac{r}{\sqrt{\delta}}\}.
\end{align*}
Let $\Phi$ be a neighborhood of $D$ in ${V}_H$, then there exists $\beta>0$ such that 
\begin{equation*}
 \{v\in {V}_H: \|v\|<\frac{r+\beta}{\sqrt{\delta}},|P(v)|<\beta\}\subset \Phi.    
\end{equation*}
Now define
\begin{equation*}
     \Psi=\{v\in {V}_H: \|v\|<r+\beta, |P(v)|<\beta \delta\},
\end{equation*}
then $\Psi \subset \Phi$. Consider functions $\varphi_i(\vx):\bR^k \rightarrow \bR$ for $i=1,2$ defined by
\begin{equation*}
\varphi_1(\vx)=\frac{\delta\|\Theta(\vx)v_0\|^2}{(r+\beta)^2},\varphi_2(\vx)=\frac{P(\Theta(\vx)v_0)}{\beta}.
\end{equation*}
Note that for $i=1,2$, deg($\varphi_i$)$\leq 2ml$. Let $\varphi=max(|\varphi_1|,|\varphi_2|)$, then $\varphi$ is $\good$ on any box by \cite[lemma 3.1]{kleinbock1998flows} and $\varphi(\boldsymbol{a})\geq 1$.
Note that we have 
\begin{equation}\label{inequality 1}
    \{\vx \in \bx: \Theta(\vx)v_0\in \Psi\}\subset \{\vx \in \bx: \varphi(\vx)<\delta\},
\end{equation}
and
\begin{equation}\label{inequality 2}
   \{\vx \in \bx: \varphi(\vx)<1\} \subset \{\vx \in \bx: \Theta(\vx)v_0\in \Phi\}.
\end{equation}

Let $E=\{\vx\in B:\varphi(\vx)\geq 1\}$, then $E\neq \emptyset$ since $\boldsymbol{a}\in E$. We construct a Besicovitch's covering for $B\setminus E$ as follows: for each $\vx \in B\setminus E$, let $\cube(\vx)$ be the smallest cube centered at $\vx$ such that $sup_{\cube(\vx)\cap B}|\varphi|=1$. Note that $\cube(\vx)$ is nonempty. Then we have $B\setminus E \subset \bigcup_{\vx\in B \setminus E}\cube(\vx)$. Thus, by Besicovitch's covering theorem, there is a countable subcollection $\{\cube(i): i \in \mathbb{N}\}$ of $\{\cube(\vx):\vx \in B \setminus E\}$ with multiplicity at most $N_k$ and $B \setminus E =\bigcup_i \cube(i)\cap B$. Note that $\cube(i)\cap B$ is a box for any $i$. Therefore by $\good$ property of $\varphi$, we have
\begin{align*}
|\{\vx\in B:\varphi(\vx)<\delta\}|&\leq \sum_i|\{\vx\in B\cap \cube(i):\varphi(\vx)<\delta\}|\\
     &\leq \sum_i c\delta^{\alpha}|B\cap \cube(i)| \\
     &\leq c\delta^{\alpha}N_k|\bigcup_i B\cap \cube(i)|\\
     &=\epsilon|B\setminus E|=\epsilon |\{\vx \in B: \varphi(\vx)<1\}|.
\end{align*}
Where the second inequality follows from $\good$ property of $\varphi$ and choice of $\cube(i)$, the third inequality follows from Besicovitch's covering theorem. Combining (\ref{inequality 1}) and (\ref{inequality 2}), this proves the proposition.
\end{proof}

Now we study the dynamics of polynomial trajectories in thin neighborhoods of singular sets. For finitely many subgroups $H_1,\cdots,H_r\in \mathcal{H}$ and compact subsets $D_1,\cdots,D_r$ of $A_{H_1},\cdots,A_{H_r}$ respectively, consider the neighborhood $\Phi_i$ of $D_i$ in ${V}_{H_i}$ for $1\leq i \leq r$. Following \cite[\S 7]{dani1993limit}, by component of $\eta^{-1}_{H_1}(\Phi_1)\cup \cdots \cup \eta^{-1}_{H_r}(\Phi_r)$ we mean $\eta^{-1}_{H_i}(\Phi_i)$ for some $1\leq i \leq r$. 
\begin{theorem}\label{result of linearization of polynomial map}
Let $H\in \mathcal{H}$ and $W$ be a nontrivial closed connected subgroup of $G$ which is generated by Ad-unipotent elements contained in it. Given a compact subset $C\subset N(H,W)$, an $\epsilon>0$ and a compact subset $K\subset G/\Gamma$, there exist $H_i \in \mathcal{H}$ with  $H_i\subset H$ and $dim H_i< dim H$ for $1\leq i \leq r$ for some $r\in \mathbb{N}$, such that the following is satisfied: Denote $H_0=H$, then we can find compact subsets $D_i\subset A_{H_i}$ for $0\leq i\leq r$ such that given any neighborhood $\Phi_i$ of $D_i$ in ${V}_{H_i}$ for $0\leq i \leq r$, there exists a neighborhood $\Omega$ of $\pi(C)$ in $G/\Gamma$ such that for any $\Theta \in \classP$, any $g\in G$ and any box $B \subset \bR^k$, either\\
$(1)$ $\exists \gamma \in \Gamma$, such that $\Theta(B)g\gamma$ is contained in one of the components of $\eta^{-1}_{H_0}(\Phi_0)\cup \cdots \cup \eta^{-1}_{H_r}(\Phi_r)$, or\\
$(2)$ $|\{\vx \in B: \Theta(\vx)g\Gamma\in \Omega \cap K\}|\leq \epsilon |B|$.
\end{theorem}

The above theorem can be viewed as a higher dimensional variant of \cite[Theorem 7.3]{dani1993limit}. Unlike the proof in \cite{Eskin_Mozes_Shah_Unipotent_flows_and_counting_1996} where one-dimensional version of the above theorem is applied, here we give an alternative proof of Theorem \ref{result of linearization of polynomial map} by Besicovitch's covering type arguments, which does not rely on the one-dimensional version result.

\begin{proof}
We use induction on $dim H$. If $dim H$ is small $(dim H<dim W)$, then $N(H,W)=\emptyset$ and the theorem is trivial. 
Now assume $dim H=n$ and the theorem is proved for $F\in \mathcal{H}$ with $dim F\leq n-1$.

Let $C^{\prime}=\eta_H(C)$. According to Proposition \ref{behavior of polynomial}, choose $D$ to be a compact subset of ${V}_H$ such that $C\prm$ is of relative size less than $\epsilon N_k/2^{k+2}$ in $D$ (see Remark \ref{remark relative size}), where $N_k$ is the multiplicity in Besicovitch's covering theorem. By Proposition \ref{proposition intersection of compact subsets of singular subvarieties}, the set $\{K\cap(\eta_H^{-1}(D)\cap \eta_H^{-1}(D)\gamma^{-1})\Gamma/\Gamma\}_{\gamma \in \Gamma}$ has only finitely many elements, say $K\cap(\eta_H^{-1}(D)\cap \eta_H^{-1}(D)\gamma_0^{-1})\Gamma/\Gamma,\cdots, K\cap(\eta_H^{-1}(D)\cap \eta_H^{-1}(D)\gamma_m^{-1})\Gamma/\Gamma$, where $\gamma_0=e$. Moreover, for $j\geq 1$, $K\cap(\eta_H^{-1}(D)\cap \eta_H^{-1}(D)\gamma_j^{-1})\Gamma/\Gamma$ is of the form $C_j\Gamma/\Gamma$ where $C_j$ is a compact subset of $N(F_j,W)$ with $F_j\subset H$ and $\dim F_j<\dim H$.

Applying the induction hypothesis to $C_j$, $N(F_j,W)$ and $\frac{\epsilon}{2m}$ in place of $C$, $N(H,W)$ and $\epsilon$ for $1\leq j \leq m$, then there exist $H_1,\cdots,H_r \subset H$ with $\dim H_i <\dim H$ (more precisely, each $H_i$ is a subgroup of some $F_j$) and compact set $D_i \subset A_{H_i}$ for $1\leq i \leq r$ such that given any neighborhood $\Phi_i$ of $D_i$ for $1\leq i \leq r$, there exists a neighborhood $\Omega_j$ of $C_j\Gamma/\Gamma$ for $1\leq j\leq m$ such that if we define $\Omega^{\prime}=\bigcup_{j=1}^{m}\Omega_j$, then for any $\Theta\in \classP$, any $g\in G$ and any box $B \subset \bR^k$, either

(i) $\exists \gamma \in \Gamma$, such that $\Theta(B)g\gamma$ is contained in one of the components of $\eta^{-1}_{H_1}(\Phi_1)\cup \cdots \cup \eta^{-1}_{H_r}(\Phi_r)$,or
 
(ii) $|\{\vx \in B: \Theta(\vx)g\Gamma\in \Omega^{\prime} \cap K\}|\leq \frac{\epsilon}{2}| B|$.
 
Now let $K_1=K\setminus\Omega^{\prime}$ and choose a compact set $K_1\prm\subset G$ such that $K_1\prm \Gamma/\Gamma=K_1 \subset G/\Gamma$. Given a neighborhood $\Phi_0$ of $D$ in ${V}_H$, applying Corollary \ref{cor no self intersection points}, we can find a neighborhood $\Phi$ of $D$ in ${V}_H$ such that $\Phi\subset \Phi_0$ and the natural quotient map
\begin{equation}\label{self intersection}
    (\eta_H^{-1}(\Phi)\Gamma_H\cap K_1\prm \Gamma)/\Gamma_H\rightarrow (\eta_H^{-1}(\Phi)\Gamma\cap K_1\prm \Gamma)/\Gamma
\end{equation}
is injective.

Now choose a neighborhood $\Psi$ in ${V}_H$ of $C\prm$ according to Proposition \ref{behavior of polynomial} such that if there exist $v_0\in {V}_H$ and $\boldsymbol{a}\in {\bx}$ with $\Theta(\boldsymbol{a})v_0 \notin \Phi$, then 
\begin{equation}\label{choice of neighborhoods}
    |\{\vx \in \bx:\Theta(\vx)v_0 \in \Psi\}|\leq \frac{\epsilon}{2^{k+2}N_k}|\{\vx\in \bx:\Theta(\vx)v_0 \in \Phi\}|.
\end{equation}

We will show that the theorem holds for $\Omega=\eta^{-1}_H(\Psi)\Gamma/\Gamma$.
Suppose (1) in the theorem does not hold, so in particular, (i) does not hold either. Then for any $\gamma \in \Gamma$, there exists $\boldsymbol{a}\in \bx$ such that $\Theta(\boldsymbol{a})g\gamma p_H \notin \Phi$. For each $q\in \eta_H(\Gamma)$, we define 
\begin{equation*}
    I(q)=\{\vx\in \bx: \Theta(\vx)gq\in \Phi\};
\end{equation*}
\begin{equation*}
    J(q)=\{\vx\in \bx: \Theta(\vx)gq\in \Psi, \Theta(\vx)g\Gamma \in K_1\};
\end{equation*}
\begin{equation*}
    S(q)=\{\vx \in I(q): \Theta(\vx)g\Gamma \in K_1\}.
\end{equation*}

We construct a Besicovitch's covering for $S(q)$ as follows: For each $\vx\in S(q)$, let $\cube^q(\vx)$ be the largest cube in $\bR^k$ centered at $\vx$ such that $\Theta(\cube^q(\vx)\cap \bx)g q \subset \Phi$. Note that $\cube^q(\vx)$ has nonempty interior and is bounded by assumption. So $S(q)\subset \bigcup_{\vx\in S(q)}\cube^q(\vx)$.

As $S(q)$ is bounded, and each $\cube^q(\vx)$ is nonempty by construction, $\{\cube^q(\vx):\vx\in S(q)\}$ is a Besicovitch's covering. By Besicovitch's covering theorem, we can extract a countable subcover $\{\cube^q(j):j\in \mathbb{N}\}$ with $N_k$ multiplicity, such that $S(q) \subset\bigcup^{\infty}_{j=1}\cube^q(j)$. We then define 
\begin{align*}
  I\prm(q)=\bigcup_{j=1}^{\infty}\cube^q(j)\cap \bx  .
\end{align*}
Clearly by the above constructions, we have $J(q)\subset S(q)\subset I\prm(q)\subset I(q)$ for all $q\in \eta_H(\Gamma)$.

Consider a finite set
\begin{equation*}
  Q=\{\boldsymbol{s}_1,\cdots, \boldsymbol{s}_{2^k}:\boldsymbol{s}_i\in \mathbb{R}^k \text{ and each coordinate of $\boldsymbol{s}_i$ is either 1 or -1}\} . 
\end{equation*}
We write $\boldsymbol{s}_i=(s_{i1},\cdots,s_{ik})$ with $s_{ij}$ being $1$ or $-1$ for $1\leq j\leq k$. We use the set $Q$ to index $2^k$ different quadrants of $\mathbb{R}^k$. For $\vx\in \bR^k, t\geq 0$ and $1\leq i\leq 2^k$, define a cube in the direction of $i$-th quadrant of $\vx$ with size $t$ by
\begin{align*}
     \cube^i_t(\vx)=[x_1,x_1+t\cdot s_1]\times\cdots\times [x_k,x_k+t\cdot s_k].
\end{align*}

Furthermore, for each $1\leq i \leq 2^k$ and $q\in \eta_H(\Gamma)$, define 
\begin{align*}
    Q^i(q)=\{\vx \in I(q): \exists t\geq 0 \text{ such that } \cube^i_t(\vx)\cap \bx \subset I(q) \text{ and }\\ \exists \boldsymbol{y} \in \cube^i_t(\vx)\cap \bx, \text{ such that } \Theta(\boldsymbol{y})g\Gamma \in K_1 \}.
\end{align*}
We will use the set $Q^i(q)$ to study the intersection property of sets $I\prm(q)$ for different $q$'s. Indeed, we claim the following:
\begin{claim}\label{injectivity} 
For any $1\leq i\leq 2^k$, and $q_1,q_2\in \eta_H(\Gamma)$, if $Q^i(q_1)\cap Q^i(q_2)\neq \emptyset$, then $q_1=\pm q_2$.
\end{claim}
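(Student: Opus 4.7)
The plan is to exploit the injectivity of the quotient map in (\ref{self intersection}) together with Lemma \ref{lemma stablizer in Gamma H}. The key geometric observation is that the cubes $\cube^i_t(\vx)$ all share $\vx$ as a corner and open into the same $i$-th quadrant, hence are totally ordered by inclusion as $t$ grows. This lets me transport a $K_1$-witness from a smaller cube into a larger one and thereby produce a single point where both $q_1$-membership and $q_2$-membership are visible simultaneously, so that the two different lifts of $q_1, q_2$ to $\Gamma$ land at the same $G/\Gamma$-point in the image of the quotient map.

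Concretely, I would take $\vx \in Q^i(q_1)\cap Q^i(q_2)$ and pick $t_j \geq 0$ together with $\boldsymbol{y}_j \in \cube^i_{t_j}(\vx)\cap B \subset I(q_j)$ satisfying $\Theta(\boldsymbol{y}_j)g\Gamma \in K_1$ for $j=1,2$. Assuming without loss of generality that $t_1 \leq t_2$, the monotonicity $\cube^i_{t_1}(\vx) \subset \cube^i_{t_2}(\vx)$ yields $\boldsymbol{y}_1 \in I(q_1) \cap I(q_2)$. Lifting $q_j = \eta_H(\gamma_j)$ with $\gamma_j \in \Gamma$ and using the equivariance $\eta_H(gh) = g\cdot \eta_H(h)$, the elements $h_j := \Theta(\boldsymbol{y}_1) g \gamma_j$ satisfy $\eta_H(h_j) = \Theta(\boldsymbol{y}_1) g \cdot q_j \in \Phi$, so $h_j \in \eta_H^{-1}(\Phi) \subset \eta_H^{-1}(\Phi)\Gamma_H$. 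Moreover $\Theta(\boldsymbol{y}_1)g \in K_1\prm\Gamma$ since $\Theta(\boldsymbol{y}_1)g\Gamma \in K_1 = K_1\prm\Gamma/\Gamma$, and therefore $h_j \in K_1\prm\Gamma$ as well.

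The two elements $h_1, h_2$ represent the same coset in $G/\Gamma$ (they differ on the right by $\gamma_1^{-1}\gamma_2 \in \Gamma$), so the injectivity of (\ref{self intersection}) forces $h_1\Gamma_H = h_2\Gamma_H$, i.e., $\gamma_1^{-1}\gamma_2 \in \Gamma_H$. Lemma \ref{lemma stablizer in Gamma H} then gives $(\gamma_1^{-1}\gamma_2) \cdot p_H = \pm p_H$, and multiplying on the left by $\gamma_1$ yields $q_2 = \pm q_1$, completing the claim. I do not anticipate a serious obstacle; the one step to handle with care is the verification that the witness $\boldsymbol{y}_1$ inherited from the smaller cube still lies in $I(q_2)$, which rests entirely on the cube monotonicity tied to the fixed quadrant direction $i$ (the reason the cubes are taken to open into a single quadrant rather than being centered at $\vx$).
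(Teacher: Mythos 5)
Your argument is correct and is essentially the paper's own proof: both exploit the nesting $\cube^i_{t_1}(\vx)\subset\cube^i_{t_2}(\vx)$ to place the single witness $\vy_1$ in $I(q_1)\cap I(q_2)$ with $\Theta(\vy_1)g\Gamma\in K_1$, then apply the injectivity of the map (\ref{self intersection}) and Lemma \ref{lemma stablizer in Gamma H} to conclude $q_1=\pm q_2$. The extra detail you supply (passing to the lifts $h_j=\Theta(\vy_1)g\gamma_j$ and checking they lie in $\eta_H^{-1}(\Phi)\cap K_1\prm\Gamma$) just makes explicit what the paper leaves implicit.
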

\begin{proof}[proof of the claim]
By assumption, there exists $\vx\in Q^i(q_1)\cap Q^i(q_2)$ and $t_j \geq 0$ such that there exists $\vy_j \in \cube_{t_j}^i(\vx)\cap \bx \subset I(q_j)$ and $\Theta(\vy_j)g\Gamma \in K_1$ for $j=1,2$. Without loss of generality, we can assume $t_1\leq t_2$, then $\cube_{t_1}^i(\vx)\subset \cube_{t_2}^i(\vx)$ and therefore we have $\Theta(\vy_1)g q_j \in \Phi$ for $j=1,2$. Let $q_j=\eta_H(\gamma_j)$ for $j=1,2$. Since the map (\ref{self intersection}) is injective and $\Theta(\vy_1)g\Gamma \in K_1$, we have $\gamma_1\Gamma_H=\gamma_2\Gamma_H$. By Lemma \ref{lemma stablizer in Gamma H}, $q_1=\pm q_2$.
\end{proof}

Define $I^i(q)=I\prm(q)\cap Q^i(q)$, for $1\leq i\leq 2^k$. We have the following claim:
\begin{claim}\label{breakup I(q)}
$I\prm(q)=\bigcup_{i=1}^{2^k}I^i(q)$ for any $q\in \eta_H(\Gamma)$.
\end{claim}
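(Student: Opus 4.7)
The plan is to establish the two inclusions; the inclusion $\bigcup_{i=1}^{2^k} I^i(q) \subset I'(q)$ is immediate from the definition $I^i(q) = I'(q) \cap Q^i(q)$, so the whole content is the reverse inclusion. Given $\vx \in I'(q)$, I would use the Besicovitch cover of $S(q)$ to pick an index $j$ with $\vx \in \mathcal{C}^q(j)$, and let $\vx_j$ denote the center of $\mathcal{C}^q(j) = \mathcal{C}^q(\vx_j)$. Two facts come for free: by the choice of $\mathcal{C}^q(\vx_j)$ we have $\mathcal{C}^q(\vx_j) \cap B \subset I(q)$, and because $\vx_j \in S(q)$ we have $\Theta(\vx_j) g \Gamma \in K_1$. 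So what remains is to exhibit a quadrant index $i$ and a radius $t \geq 0$ with $\mathcal{C}^i_t(\vx) \cap B \subset I(q)$ and $\vx_j \in \mathcal{C}^i_t(\vx) \cap B$; this will place $\vx$ in $Q^i(q)$, and hence in $I^i(q) = I'(q) \cap Q^i(q)$.

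The geometric idea is to choose the corner cube at $\vx$ so that it ``opens'' in the direction of $\vx_j$. Writing $\vx_j - \vx = (v_1, \ldots, v_k)$, I would take $s_{il} = \operatorname{sign}(v_l)$ (with the convention $s_{il} = +1$ when $v_l = 0$), which selects the quadrant $i$ containing $\vx_j - \vx$, and set $t = \max_l |v_l|$. Two coordinate-wise verifications then finish the argument: first, the choice of $s_{il}$ and $t$ forces $\vx_j \in \mathcal{C}^i_t(\vx)$ by direct inspection of each interval $[x_l, x_l + t s_{il}]$; second, if $r$ denotes the half-side length of $\mathcal{C}^q(\vx_j)$, then $\vx \in \mathcal{C}^q(\vx_j)$ gives $|v_l| \leq r$ for every $l$ and so $t \leq r$, which combined with a short case split on the sign of $v_l$ shows that in each coordinate $\mathcal{C}^i_t(\vx)_l \subset [x_{j,l} - r, x_{j,l} + r]$, i.e.\ $\mathcal{C}^i_t(\vx) \subset \mathcal{C}^q(\vx_j)$. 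Intersecting with $B$ yields $\mathcal{C}^i_t(\vx) \cap B \subset I(q)$, and $\vx_j$ is the required witness.

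There is no genuine obstacle here; the statement is essentially a geometric repackaging of the Besicovitch cover into $2^k$ quadrant-indexed subfamilies. The only part that deserves care is the containment $\mathcal{C}^i_t(\vx) \subset \mathcal{C}^q(\vx_j)$, but even that reduces, in each coordinate, to the fact that $t = \|\vx_j - \vx\|_\infty \leq r$. The point of introducing this refinement is strategic rather than technical: the quadrant structure is exactly what is needed for Claim \ref{injectivity}, which requires that for two points $\vx \in Q^i(q_1) \cap Q^i(q_2)$ the two corner cubes $\mathcal{C}^i_{t_1}(\vx)$ and $\mathcal{C}^i_{t_2}(\vx)$ be nested, so that self-intersection is ruled out via the injectivity in \eqref{self intersection}.
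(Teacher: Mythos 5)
Your argument is correct and is essentially the proof given in the paper: there too one takes the Besicovitch cube $\cube^q(j)$ containing $\vx$, uses its center $\vy\in S(q)$ as the witness, and selects the quadrant-cube $\cube^i_t(\vx)$ pointing toward $\vy$ with $\vy\in\cube^i_t(\vx)\cap B$ and $\cube^i_t(\vx)\subset\cube^q(j)$, so that $\cube^i_t(\vx)\cap B\subset I(q)$ and hence $\vx\in Q^i(q)$. Your explicit sign-and-$\|\cdot\|_\infty$ verification just fills in the coordinate-wise details the paper leaves implicit.
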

\begin{proof}[proof of the claim]
Fix $q \in \eta_H(\Gamma)$, let $\vx \in I\prm(q)$. By definition of $I\prm(q)$, $\vx$ is in some cube $\cube(\vy)$ centered at $\vy\in I(q)$ such that $\Theta(\vy)g\Gamma \in K_1$ and $\cube(\vy)\cap B \subset I(q)$. Therefore, depending on the quadrant where $\vy$ lies with respect to $\vx$, we can find $1\leq i \leq 2^k$ and $t\geq 0$ such that $\vy\in \cube^i_t(\vx)\cap\bx$ and $\cube^i_t(\vx)\subset \cube(\vy)$. Since $\cube^i_t(\vx)\cap\bx\subset I(q)$, by definition, $\vx \in Q^i(q)$. Therefore, $I\prm(q)\subset \bigcup_{i=1}^{2^k}Q^i(q)$.
\end{proof}

Recall that $I\prm(q)=\bigcup_{j=1}^{\infty}\cube^q(j)\cap \bx$. For each $j$, we have 
\begin{align*}
    |J(q)\cap\cube^q(j)\cap \bx |&=|\{\vx\in \cube^q(j)\cap \bx:\Theta(\vx)g q\in \Psi, \Theta(\vx)g\Gamma\in K_1 \}|\\
    &\leq \frac{\epsilon}{2^{k+2}N_k}|\{\vx \in \cube^q(j)\cap \bx: \Theta(\vx)gq\in \Phi\}|\\
    &\leq \frac{\epsilon}{2^{k+2}N_k}|\cube^q(j)\cap \bx|,
\end{align*}

where the first inequality follows from the fact that intersection of two boxes is still a box and inequality (\ref{choice of neighborhoods}). Recall that $J(q)\subset I\prm(q)$, so we have

\begin{align*}
    |J(q)|&=|\bigcup_{j=1}^{\infty}(J(q)\cap \cube^q(j)\cap \bx)|\\
    &\leq \sum_{j=1}^{\infty}|J(q)\cap \cube^q(j)\cap \bx|\\
    &\leq \frac{\epsilon}{2^{k+2}N_k}\sum_{j=1}^{\infty}|\cube^q(j)\cap \bx|\\
    &\leq \frac{\epsilon}{2^{k+2}}|I\prm(q)|,
\end{align*}

where the last inequality follows from the fact that $N_k$ is the multiplicity of the covering (see Theorem \ref{besicovich}).

Recall that we define $\Omega=\eta^{-1}_H(\Psi)\Gamma/\Gamma$. By Theorem \ref{discrete orbit}, $\eta_H(\Gamma)$ is closed and discrete. Therefore, we have 
\begin{align*}
    |\{\vx \in \bx: \Theta(\vx)g\Gamma \in \Omega\cap K_1\}|&=|\bigcup_{q\in \eta_H(\Gamma)}J(q)|\\
    &\leq \sum_{q\in \eta_H(\Gamma)}|J(q)|\leq \frac{\epsilon}{2^{k+2}} \sum_{q\in \eta_H(\Gamma)}|I\prm(q)|\\
    &\leq \frac{\epsilon}{2^{k+2}}\sum_{q\in \eta_H(\Gamma)}\sum_{i=1}^{2^k}|I^i(q)|\\
    &\leq 2\cdot \frac{\epsilon}{2^{k+2}} \sum_{i=1}^{2^k}|\bigcup_{q\in \eta_H(\Gamma)}I^i(q)|\\
    &\leq \frac{\epsilon}{2^{k+1}}\cdot 2^k|\bigcup_{q\in \eta_H(\Gamma)}I(q)|\leq \frac{\epsilon}{2}|\bx| \numberthis \label{inequality0},
\end{align*}
where the third and fourth inequalities follow from claim \ref{injectivity} and \ref{breakup I(q)}. By assumption, we have 
\begin{align}\label{assumption that (2) holds}
     |\{\vx \in \bx: \Theta(\vx)g\Gamma \in \Omega\prm\cap K\}|\leq \frac{\epsilon}{2}|B|.
\end{align}
Combine (\ref{assumption that (2) holds}), (\ref{inequality0}) and $K_1=K\setminus \Omega\prm$, we conclude that (2) in the theorem holds and complete the proof.
\end{proof}

\begin{remark}
Let $C\subset N(H,W)$, $\epsilon>0$ and compact $K\subset G/\Gamma$ be given as in Theorem \ref{result of linearization of polynomial map}. We observe that in the proof of Proposition \ref{behavior of polynomial} and Theorem \ref{result of linearization of polynomial map}, we can slightly shrink the neighborhood $\Omega$ of $\pi(C)$ and compact set $K$ so that the conclusions still hold. Therefore we are able to twist the polynomial map $\Theta$ by a continuous map with image in a small neighborhood of $G$ so that similar dichotomy in Theorem \ref{result of linearization of polynomial map} is still true. More precisely, we have the following corollary, which can be viewed as a version of twisting technique developed by Shah:
\end{remark} 

\begin{corollary}(Twisting technique)\label{modification of linearization}
Let $H$, $W$, $C$, $\epsilon>0$, $K$ be given as in Theorem \ref{result of linearization of polynomial map}. For $0\leq i\leq r$, let $H_i$, $D_i$, $\Phi_i$ and neighborhood $\Omega$ of $\pi(C)$ be as in the conclusion of Theorem \ref{result of linearization of polynomial map} such that either (1) or (2) holds. Then depending on $C$ and $K$, there exist a compact neighborhood $\mathcal{U}$ of $e$ in $G$, a compact set $\tilde{K}\subset K$ and a neighborhood $\tilde{\Omega}\subset \Omega$ of $\pi(C)$ satisfying $\mathcal{U}\tilde{K}\subset K$ and $\mathcal{U}\tilde{\Omega} \subset \Omega$, such that for any box B, any continuous map $Z: B\rightarrow \mathcal{U}$, any $\Theta\in \classP$ and any $g\in G$, either

$(1\prm)$ $\exists \gamma \in \Gamma$, such that $Z(\bx)\Theta(\bx)g\gamma$ is contained in one of the components of $\eta^{-1}_{H_0}(\mathcal{U}\cdot\Phi_0)\cup \cdots \cup \eta^{-1}_{H_r}(\mathcal{U}\cdot\Phi_r)$, or

$(2\prm)$ $|\{\vx \in \bx: Z(\vx)\Theta(\vx)g\Gamma\in \tilde{\Omega}\cap \tilde{K}\}|\leq \epsilon |\bx|$.
\end{corollary}

\begin{proof}
Recall that in the proof of Proposition \ref{behavior of polynomial}, we choose $r>0$ such that $C\prm=\eta_H(C) \subset \{v\in {V}_H:\|v\|\leq r\}$. Indeed, we can choose $0<r\prm<r$ such that $C\prm\subset \{v\in {V}_H:\|v\|\leq r\prm\}\subset \{v\in {V}_H:\|v\|\leq r\}$. Also recall that we define $D=\{v\in {V}_H:\|v\|<r/\sqrt{\delta},|P(v)|=0\}$, where $\delta=(c^{-1}\epsilon_0 N_k^{-1})^{\frac{1}{\alpha}}$ and $\epsilon_0=\epsilon N_k/2^{k+2}$, i.e., $C\prm$ is of relative size less than $\epsilon N_k/2^{k+2}$ in $D$. Given a neighborhood $\Phi_0$ of $D$ in ${V}_H$, we can choose a neighborhood $\Phi\subset \Phi_0$ of $D$ as in the proof of Theorem \ref{result of linearization of polynomial map} such that the map (\ref{self intersection}) is injective.

There exists $\beta>0$ such that $\{v\in {V}_H:\|v\|<(r+\beta)/\sqrt{\delta},|P(v)|<\beta\}\subset \Phi$ (note that when $D$ is chosen to be large, the neighborhood $\Phi$ obtained in Corollary \ref{cor no self intersection points} need to be small so that the map (\ref{self intersection}) is injective. So typically $\beta$ could be very small). Define 
\begin{align*}
    \Psi=\{v\in {V}_H: \|v\|<r+\beta,|P(v)|<\beta\cdot\delta\}.
\end{align*}
Choose some $0<\beta\prm<\beta$, define
\begin{align*}
    \Psi\prm=\{v\in {V}_H: \|v\|<r\prm+\beta\prm,|P(v)|<\beta\prm\cdot\delta\}.
\end{align*}
Note that $C\prm \subset \Psi\prm \subset \Psi$. Then we can find a sufficiently small compact neighborhood $\mathcal{U}_1$ of e in G ($\mathcal{U}_1$ could be very small as $\beta$ is small), such that $\mathcal{U}_1\cdot \Psi\prm\subset \Psi$. Also, we can find a compact set $\tilde{K}\subset K$ and a neighborhood $\mathcal{U}_2$ of $e$ in $G$ such that $\mathcal{U}_2\tilde{K}\subset K$. Let $\mathcal{U}=\mathcal{U}_1\cap \mathcal{U}_2$, and without loss of generality, we can assume $\mathcal{U}=\mathcal{U}^{-1}$. Recall that in Theorem \ref{result of linearization of polynomial map}, we define $\Omega=\eta_H^{-1}(\Psi)\Gamma/\Gamma$. Now let $\tilde{\Omega}=\eta_H^{-1}(\Psi\prm)\Gamma/\Gamma$, we have $\mathcal{U}\tilde{\Omega}\subset \Omega$.

Let $Z:B\to \mathcal{U}$ be a continuous map. If $(1\prm)$ does not hold, then (1) in Theorem \ref{result of linearization of polynomial map} does not hold neither, thus (2) holds. Therefore we have 
\begin{align*}
    |\{\vx \in \bx: Z(\vx)\Theta(\vx)g\Gamma\in \tilde{\Omega}\cap \tilde{K}\}|&\leq  |\{\vx \in \bx: \Theta(\vx)g\Gamma\in \mathcal{U}\tilde{\Omega}\cap \mathcal{U}\tilde{K}\}|\\
    &\leq |\{\vx \in \bx: \Theta(\vx)g\Gamma\in \Omega\cap K\}|\\
    &\leq \epsilon|\bx|.
\end{align*}

So $(2\prm)$ holds. This proves the corollary.
\end{proof}

\section{Locally unipotent invariant measure}\label{section: locally unipotent invariance}

Ratner's measure classification theorem classifies all unipotent invariant probability measures on homogeneous spaces. Any unipotent invariant ergodic measure is supported on a closed $H$-orbit for some $H\in \mathcal{H}$ as shown in Theorem \ref{Ratner's Theorem}. A twisting technique developed by Shah (cf. \cite{shah2009equidistribution}) and Ratner's theorem enable one to study locally unipotent invariant measures, which we will give the definition below.

Let $(Y,\mathcal{B},\lambda)$ be a Borel measurable metric space with Borel measure $\lambda$. Let $G$ be a Lie group and $\Gamma$ be a lattice of $G$. Let $B\subset Y$ be a compact subset with $\lambda(B)>0$, and $\{\theta_n\}_{n\in \mathbb{N}}$ be a sequence of continuous maps from $Y$ to $G/\Gamma$. Consider the measure $\mu_n^B$ on $G/\Gamma$ defined by 
\begin{align}\label{definition of original measure in l.u.i}
    \mu_n^B:=\frac{1}{\lambda(B)}\int_B \delta_{\theta_n(y)\Gamma/\Gamma}d\lambda(y),
\end{align}
where $\delta$ denotes the Dirac measure. We assume that $\mu_n^B$ converges to a probability measure $\mu^B$ on $G/\Gamma$ as $n\to \infty$.

\begin{definition}\label{definition: admissible partitions}
A sequence of finite partitions $\{\mathbb{P}_b=\{B^b_a:1\leq a\leq b\}\}_{b\in \mathbb{N}}$ of a compact set $B\subset Y$ is admissible if it satisfies the following:

(1) each $B^b_a$ is a closed set in $B$ with nonempty interior;

(2) for any $B^{b}_{a_0}\in \mathbb{P}_{b}$, there exists a $B^{b+1}_{a_1}\in \mathbb{P}_{b+1}$ such that $B^{b+1}_{a_1}\subset B^{b}_{a_0}$;

(3) $\sup_{1\leq a\leq b}diam(B^{b}_a)\to 0$ as $b\to \infty$.
\end{definition}

For any $B_a^b\in \mathbb{P}_b$, we define measures $\mu^{B_a^b}_n$ as in (\ref{definition of original measure in l.u.i}) by replacing $B$ with $B_a^b$. For any fixed $b$ and $a$, After passing to a subsequence, we assume that $\mu^{B_a^b}_n$ converges to a probability measure $\mu^{B_a^b}$ on $Y$ as $n\to \infty$.

\begin{definition}\label{definition of locally unipotent invariant measure}
The probability measure $\mu^B$ is \textbf{locally unipotent invariant} if there exist a compact subset $X\prm$ of $B$ with $\lambda(X\prm)=0$, a map $Z:B \to G$ which is continuous on $B\backslash X\prm$ and an admissible sequence of finite partitions $\{\mathbb{P}_b=\{B^b_a:1\leq a\leq b\}\}_{b\in \mathbb{N}}$ of $B$ such that the following hold:

(1) For any $b\in \mathbb{N}$, for any $B_a^b\in \mathbb{P}_b$ such that $B_a^b\cap X\prm=\emptyset$, the measure $\nu_n^{B^b_a}$ defined by 
\begin{align}\label{definition of perturbed measures in l.u.i}
    \nu_n^{B_a^b}:=\frac{1}{\lambda(B^b_a)}\int_{B^b_a} \delta_{Z(y)\theta_n(y)\Gamma/\Gamma}d\lambda(y)
\end{align}
converges to a probability measure $\nu^{B^b_a}$ on $G/\Gamma$ as $n\to \infty$;

(2) There exists a nontrivial one-parameter unipotent subgroup $\rho:\bR\to G$ such that for any $b\in \mathbb{N}$, for any $B_a^b\in \mathbb{P}_b$ with $B_a^b\cap X\prm=\emptyset$, $\nu^{B^b_a}$ is invariant under $\rho$.
\end{definition}

\begin{remark}
1. By definition above, if $\mu^B$ is invariant under a nontrivial unipotent subgroup, then $\mu^B$ is automatically locally unipotent invariant.

2. We allow the continuous map $Z$ to be undefined in the compact subset $X\prm$ with zero $\lambda$ measure. In typical examples, $Z(y)$ leaves any compact subset of $G$ when $y$ approaches $X\prm$. Since we are working with weak-* topology of measures, this singular set $X\prm$ does no harm.
\end{remark}

\begin{example}
For $d\geq 2$, let $G=SL_d(\bR)$ and $\Gamma=SL_d(\mathbb{Z})$.

Let $a_t=diag(e^{(d-1)t},e^{-t},\cdots,e^{-t})$. Assume that $\varphi: [0,1]\to \bR^{d-1}$ is an analytic map. The expanding horosphere of $a_t$ in $G$ can be identified with $\bR^{d-1}$. Let $u(\varphi(s)))$ be the image of $\varphi(s)$ through this identification. Let $t_n\to \infty$ be an increasing sequence. Consider the measure $\mu_n$ defined by 
\begin{align*}
    \mu_n=\int_0^1 \delta_{a_{t_n}u(\varphi(s))\Gamma/\Gamma}ds.
\end{align*}
It is shown in \cite{shah2009equidistribution} that $\mu_n$ converges to a locally unipotent invariant probability measure $\mu$ on $G/\Gamma$.
\end{example}

\begin{example}
Let $J$ be a nontrivial box contained in $[0,1]^k$ with positive Lebesgue measure. Define probability measure $\mu_n^J$ as in (\ref{measure over subbox}). By Corollary \ref{cor nondivergence over subbox}, there exists an increasing subsequence $\{n_i\}_{i\in \mathbb{N}}$ such that $\mu_{n_i}^J\to \mu^J$ for $i\to \infty$. We will prove that $\mu^J$ is a locally unipotent invariant probability measure on $G/\Gamma$. 
\end{example}

We have the following property for locally unipotent invariant measure $\mu^B$:

\begin{lemma}\label{lemma: relation between original measure and perturbed measure}
Let $\mu^B$, $\{\mathbb{P}_b\}_{b\in\mathbb{N}}$ and the map $Z$ be as in Definition \ref{definition of locally unipotent invariant measure}. Assume that there exists a subgroup $H$ of $G$ such that for any $b\in \mathbb{N}$, any $B^b_a\in \mathbb{P}_b$ with $B^b_a\cap X\prm=\emptyset$, the limit measure $\nu^{B_a^b}$ is invariant under $H$. Furthermore, assume that $Im(Z)\subset N_G(H)$, where $N_G(H)$ is the normalizer of $H$ in $G$. Then $\mu^B$ is invariant under $H$.
\end{lemma}

\begin{proof}
By definition of $\mu^B$, for any $b\in \mathbb{N}$, we have \begin{align*}
    \mu^B=\frac{1}{\lambda(B)}\sum_{a=1}^b \lambda(B^b_a)\mu^{B^b_a}.
\end{align*}
Since $\lambda(X\prm)=0$ and $X\prm$ is compact, it suffices to show that $\mu^{B^b_a}$ is $H$-invariant for any $B^b_a$ with $B^b_a\cap X\prm=\emptyset$. For the following argument we fix a $B_{a_0}^{b_0}$ and replace $B_{a_0}^{b_0}$ by $B$, assuming that $B\cap X\prm=\emptyset$.

Fix a function $\varphi \in C_c(X)$. For any $g\in G$, define a function $\varphi_g(x)=\varphi(g x)$ for any $x\in X$. Now fix an $h_0\in H$. Given $\epsilon>0$, depending on $\varphi$, $h_0$ and $\epsilon$, we can find a large enough $b\in \mathbb{N}$ such that any $1\leq a \leq b$, $\forall y,y\prm \in B^b_a$,
\begin{equation*}
    |\varphi(Z(y\prm)^{-1}Z(y)\theta_n(y)\Gamma)-\varphi(\theta_n(y)\Gamma)|<\epsilon ,
\end{equation*}
and
\begin{equation*}
    |\varphi_{h_0}(Z(y\prm)^{-1}Z(y)\theta_n(y)\Gamma)-\varphi_{h_0}(\theta_n(y)\Gamma)|<\epsilon.
\end{equation*}

To simply our notations, we write $B_a=B^b_a$ for $1\leq a\leq b$.
After passing to a subsequence, by definition of $\nu_n^{B_s}$,
\begin{align*}
    \nu^B=\frac{1}{\lambda(B)}\sum_{a=1}^b \lambda(B_a)\nu^{B_a}.
\end{align*}

For $1\leq a \leq b$, fix a point $y^a\in B_a$. Since $Im(Z)\subset N_G(H)$, $h_0\cdot Z(y^a)^{-1}=Z(y^a)^{-1}\cdot h_a$ for some $h_a\in H$. Choose $N\in \mathbb{N}$ large enough such that for all $n\geq N$ and $1\leq a \leq b$, we have the following estimates, where for two real numbers $A$ and $B$, $A \overset{\epsilon}{\sim} B$ means $|A-B|<\epsilon$:

\begin{equation*}\label{inequality 4 in equidistribution}
     \frac{1}{\lambda(B_a)}\int_{B_a}\varphi(\theta_n(y)\Gamma)d\lambda(y)\overset{\epsilon}{\sim}\int_X \varphi d\mu^{B_a};
\end{equation*}

\begin{equation*}\label{inequality 1 in equidistribution} 
    \int_X\varphi_{h_0}d\mu^{B_a} \overset{\epsilon}{\sim}\frac{1}{\lambda(B_a)}\int_{B_a}\varphi_{h_0}(\theta_n(y)\Gamma)d\lambda(y);
\end{equation*}

\begin{equation}\label{inequality 2 in equidistribution}
    \frac{1}{\lambda(B_a)}\int_{B_a}\varphi_{Z(y^a)^{-1}}(h_a Z(y)\theta_n(y)\Gamma)d\lambda(y)\overset{\epsilon}{\sim}\int_X\varphi_{Z(y^a)^{-1}}d\nu^{B_a};
\end{equation}

\begin{equation*}
    \int_X\varphi_{Z(y^a)^{-1}}d\nu^{B_a}\overset{\epsilon}{\sim}\frac{1}{\lambda(B_a)}\int_{B_a}\varphi_{Z(y^a)^{-1}}(Z(y)\theta_n(y)\Gamma)d\lambda(y).
\end{equation*}

where (\ref{inequality 2 in equidistribution}) follows by $H$-invariance of $\nu^{B_a}$. Therefore, for all $n\geq N$, we have the following estimates:
\begin{align*}
    \int_X \varphi_{h_0}d\mu^{B_a} &\overset{\epsilon}{\sim} \frac{1}{\lambda(B_a)}\int_{B_a} \varphi_{h_0}(\theta_n(y)\Gamma)d\lambda(y)\\
    &\overset{\epsilon}{\sim} \frac{1}{\lambda(B_a)}\int_{B_a} \varphi_{h_0}(Z(y^a)^{-1}Z(y)\theta_n(y)\Gamma)d\lambda(y)\\&=\frac{1}{\lambda(B_a)}\int_{B_a} \varphi(Z(y^a)^{-1}h_a Z(y)\theta_n(y)\Gamma)d\lambda(y) \\
   &=\frac{1}{\lambda(B_a)}\int_{B_a} \varphi_{Z(y^a)^{-1}}(h_aZ(y)\theta_n(y)\Gamma)d\lambda(y)\\&\overset{\epsilon}{\sim}\int_X\varphi_{Z(y^a)^{-1}}d\nu^{B_a}\\
   &\overset{\epsilon}{\sim}\frac{1}{\lambda(B_a)}\int_{B_a} \varphi_{Z(y^a)^{-1}}(Z(y)\theta_n(y)\Gamma)d\lambda(y)\\
    &\overset{\epsilon}{\sim} \frac{1}{\lambda(B_a)}\int_{B_a} \varphi(\theta_n(y)\Gamma)d\lambda(y)\\
    &\overset{\epsilon}{\sim} \int_X \varphi d\mu^{B_a}.
\end{align*}
Then since
\begin{align*}
    \int_X \varphi d\mu^B=\frac{1}{\lambda(B)}\sum_{a=1}^b \lambda(B_a)\int_X \varphi d\mu^{B_a},
\end{align*}
we have 
\begin{align*}
    \left| \int_X \varphi_{h_0} d\mu^B-\int_X\varphi d\mu^{B} \right|&\leq\frac{1}{\lambda(B)}\sum_{a=1}^b \lambda(B_a)\left| \int_X(\varphi_{h_0}-\varphi)d\mu^{B_a} \right|\\
    &\leq \frac{1}{\lambda(B)}\sum_{a=1}^b \lambda(B_a)6\epsilon =6\epsilon.
\end{align*}
Since $\epsilon$, $h_0$ and $\varphi$ are arbitrary, $\mu^B$ is $H$-invariant.
\end{proof}

Now we will prove that for any closed subbox $J\subset [0,1]^k$ with positive measure, the limit measure $\mu^J$ of measures $\mu^J_n$ defined as in (\ref{measure over subbox}) is locally unipotent invariant. This relies on the following technical lemma, for which we defer the proof to section \ref{Section Invariance under a unipotent flow}.

\begin{lemma}\label{local unipotent invariance}
Let $G$ be a real algebraic group and $\Theta:\bR^k \to G$ be a nonconstant regular algebraic map such that $\Theta(\bs{0})=e$. Given a sequence of boxes $\bx_n=[0,T_n^{\lambda_1}]\times\cdots\times[0,T_n^{\lambda_k}]$ such that $T_n\to \infty$ as $n\to \infty$ and $\lambda_i>0 $ for $1\leq i\leq k$. Let $\theta$ be defined as in (\ref{definition of rescaling map}). Then there exists $q\geq 0$ such that the following holds: there exist finitely many proper subvarieties $X_1,\cdots,X_m$ of $\bR^k$, for any $\valpha\in [0,1]^k\setminus \bigcup_{i=1}^m X_i$, there exists a nontrivial algebraic unipotent one-parameter subgroup $\rho_{\valpha}:\bR \to G$ such that for any $s\in \bR$,
\begin{align*}
    \lim_{t\to \infty}\theta(\valpha,t+st^{-q})\theta(\valpha,t)^{-1}=\rho_{\valpha}(s).
\end{align*}
\end{lemma}

Since $\rho_{\valpha}$ is a one-parameter algebraic unipotent subgroup, there is a nilpotent matrix $Y_{\valpha}$ such that
\begin{align}\label{exp of nilpotent element}
   \rho_{\valpha}(s)=\exp{(sY_{\valpha})} 
\end{align}
By equation (\ref{equation 0}), taking derivative of both sides in (\ref{exp of nilpotent element}) with respect to $s$ then set $s=0$, we have $Y_{\valpha}=M_1(\valpha)$. Therefore each matrix coordinate of $Y_{\valpha}$ is a polynomial in $\valpha$. Moreover, $Y_{\valpha}$ is nontrivial as $\valpha$ varies in $[0,1]^k\setminus \bigcup_{i=1}^m X_i$. 

We shall need the following fact for semisimple algebraic group defined over $\mathbb{R}$.

\begin{lemma}(\cite[Lemma 4.2]{yang2020equidistribution})\label{finite conjugacy classes}
Let $G$ be a semisimple real Lie group defined over $\mathbb{R}$, then there are only finitely many $G$-conjugacy classes of the nilpotent elements in the Lie algebra $\mathfrak{g}$ of $G$.
\end{lemma}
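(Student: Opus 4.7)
The plan is to reduce the problem to counting nilpotent orbits in a complex semisimple Lie algebra, apply the Jacobson--Morozov and Kostant--Mal'cev theorems to parameterize these orbits by $\mathfrak{sl}_2$-triples, and then invoke Dynkin's classification of such triples.

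First I would complexify: let $\mathfrak{g}_{\mathbb{C}} = \mathfrak{g} \otimes_{\mathbb{R}} \mathbb{C}$ and let $G_{\mathbb{C}}$ be the corresponding complex adjoint group, so that $G$ sits inside $G_{\mathbb{C}}$ via the adjoint representation. By the Jacobson--Morozov theorem, every nonzero nilpotent $X \in \mathfrak{g}_{\mathbb{C}}$ embeds in an $\mathfrak{sl}_2$-triple $(X, H, Y)$, and by the Kostant--Mal'cev conjugacy theorem two nilpotents are $G_{\mathbb{C}}$-conjugate if and only if their associated $\mathfrak{sl}_2$-triples are. After conjugation, the semisimple element $H$ lies in a fixed Cartan subalgebra inside a fixed Weyl chamber, where its values on the simple roots are constrained to lie in $\{0, 1, 2\}$. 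This produces a weighted Dynkin diagram that determines the $G_{\mathbb{C}}$-orbit of $X$. Since there are only finitely many such diagrams on a fixed Dynkin diagram of $\mathfrak{g}_{\mathbb{C}}$, there are only finitely many nilpotent $G_{\mathbb{C}}$-orbits in $\mathfrak{g}_{\mathbb{C}}$.

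Next I would descend from $\mathbb{C}$ back to $\mathbb{R}$. Any $G$-orbit in the nilpotent cone of $\mathfrak{g}$ is contained in a unique complex $G_{\mathbb{C}}$-orbit upon complexification, so it suffices to show that each such complex orbit $\mathcal{O}_{\mathbb{C}}$ meets $\mathfrak{g}$ in only finitely many $G$-orbits. Fixing a real representative $X \in \mathcal{O}_{\mathbb{C}} \cap \mathfrak{g}$, a standard Galois-cohomology argument identifies the set of $G$-orbits in $\mathcal{O}_{\mathbb{C}} \cap \mathfrak{g}$ with a subset of $H^{1}(\mathrm{Gal}(\mathbb{C}/\mathbb{R}), Z_{G_{\mathbb{C}}}(X))$, where $Z_{G_{\mathbb{C}}}(X)$ is the stabilizer, a real algebraic group with finitely many connected components. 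The Borel--Serre finiteness theorem for Galois cohomology of real algebraic groups then yields the desired finiteness.

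The main obstacle is the descent step from complex to real orbits; all ingredients are classical but need to be combined with some care. A clean alternative that avoids Galois cohomology is to appeal directly to the Kostant--Sekiguchi--Djokovi\'c correspondence, which parameterizes real nilpotent $G$-orbits by finite combinatorial data refining the weighted Dynkin diagrams, making finiteness immediate by inspection.
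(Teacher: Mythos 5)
Your argument is correct, but note that the paper does not actually prove this lemma: it is quoted verbatim with a citation to Yang's paper, so there is no in-paper proof to compare against. Your proposal is essentially the classical argument and fills that gap soundly. The first half (Jacobson--Morozov, Kostant--Mal'cev, and Dynkin's weighted diagrams with labels in $\{0,1,2\}$, giving finiteness of nilpotent $G_{\mathbb{C}}$-orbits in $\mathfrak{g}_{\mathbb{C}}$) is standard and complete. The descent step is also fine: for a real representative $X\in\mathcal{O}_{\mathbb{C}}\cap\mathfrak{g}$ the $G(\mathbb{R})$-orbits inside $\mathcal{O}_{\mathbb{C}}\cap\mathfrak{g}$ inject into the kernel of $H^{1}(\mathbb{R},Z_{G_{\mathbb{C}}}(X))\to H^{1}(\mathbb{R},G_{\mathbb{C}})$, and Borel--Serre finiteness of real Galois cohomology of linear algebraic groups applies since $Z_{G_{\mathbb{C}}}(X)$ is defined over $\mathbb{R}$; the Kostant--Sekiguchi alternative is equally valid. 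Two small points worth making explicit. First, $G$ in the lemma (and in the paper's application, where $G$ is an open subgroup of the real points of an algebraic group) need not be all of $G(\mathbb{R})$; but $G$ contains the identity component and $G(\mathbb{R})$ has finitely many components, so each $G(\mathbb{R})$-orbit splits into at most finitely many $G$-orbits and finiteness is unaffected. Second, if you want to avoid Galois cohomology entirely, there is an even more elementary descent: $\mathcal{O}_{\mathbb{C}}\cap\mathfrak{g}$ is a real algebraic (hence semialgebraic) set and so has finitely many connected components, while each real orbit is open in $\mathcal{O}_{\mathbb{C}}\cap\mathfrak{g}$ because $[\mathfrak{g},X]$ is a real form of the tangent space $[\mathfrak{g}_{\mathbb{C}},X]$; a set with finitely many components cannot be partitioned into infinitely many disjoint open pieces. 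Any of these routes proves the cited lemma.
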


\begin{lemma}\label{existence of map Z}
Suppose that the image of $\Theta$ is contained in a semisimple real Lie group $G$ defined over $\mathbb{R}$. Then there exists a closed $X\prm\subset [0,1]^k$ with $|X\prm|=0$ such that for any $\valpha^0 \in [0,1]^k\setminus X\prm$, there exists a continuous map $Z:[0,1]^k\setminus X\prm\to G$ and
\begin{align}
    Ad(Z(\valpha)) Y_{\valpha}= Y_{\valpha^0}, \forall \valpha\in [0,1]^k\setminus X\prm.
\end{align}
\end{lemma}

\begin{proof}
By above discussion, $Y_{\valpha}$ is a nilpotent element for any $\valpha\in [0,1]^k\setminus \bigcup_{i=1}^m X_i$.  By Lemma \ref{finite conjugacy classes} and the fact that $\valpha\mapsto Y_{\valpha}$ is a polynomial map, for any $\valpha$ not belong to finitely many proper subvarieties $X_{m+1},\cdots,X_{r}$ in $\bR^k$, $Y_{\valpha}$ is in a single nontrivial $G$-conjugacy class of nilpotent elements. We can set 
\begin{align*}
    X\prm=\bigcup_{i=1}^r X_i.
\end{align*}
\end{proof}

Now we take a nontrivial box $J\subset [0,1]^k\setminus X\prm$. Given a sequence $T_n\to \infty$, we define $B_n^J$ as in (\ref{expanding subbox}). We also define a measure $\nu_{n}^J=\nu_{B_n}^J$ on $G/\Gamma$ by 
\begin{align}\label{modified measure over subbox}
    \int_{G/\Gamma} f d\nu_{n}^J:=\frac{1}{|J|}\int_{\valpha\in J}f(Z(\valpha)\theta(\valpha,T_n)\Gamma)d\valpha, \forall f\in C_c(G/\Gamma).
\end{align}

\begin{lemma}\label{lemma nondivergence of modified measure}
After passing to a subsequence of $\{T_n\}_{n\in \mathbb{N}}$, the measures $\nu_n^J$ defined in (\ref{modified measure over subbox}) converge to $\nu^J$ as $n\to \infty$, where $\nu^J$ is a probability measure on $G/\Gamma$.
\end{lemma}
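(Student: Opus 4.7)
The plan is to deduce tightness of the family $\{\nu_n^J\}_{n\in\mathbb{N}}$ directly from the tightness of $\{\mu_n^J\}_{n\in\mathbb{N}}$ furnished by Corollary~\ref{cor nondivergence over subbox} and Theorem~\ref{Nondivergence}. The key observation is that $\nu_n^J$ is obtained from $\mu_n^J$ by the pointwise twist $\valpha\mapsto Z(\valpha)$, and $Z$ is continuous on the closed box $\overline J$, which (after shrinking $J$ slightly if necessary) lies inside the open set $[0,1]^k\setminus\bigcup_{i=1}^{r}X_i$ on which $Z$ is defined; in particular $Z(\overline J)\subset G$ is compact, so the twist is bounded.

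First I would produce a compact set in $G/\Gamma$ capturing almost all mass of $\mu_n^J$. Given $\epsilon>0$, applying Theorem~\ref{Nondivergence} with $C=\{\Gamma\}$, $g\Gamma=\Gamma$, and $B=B_n$ (legitimate because $\Theta(\bs 0)=e$ forces $\Gamma\in\Theta(B_n)\Gamma$) yields a compact $K'\subset G/\Gamma$ with $\mu_n(K')>1-\epsilon|J|$ for all $n$. Since $|B_n^J|=|J||B_n|$, restricting this Lebesgue estimate from $B_n$ down to $B_n^J$ gives $\mu_n^J(K')\geq 1-\epsilon$ for every $n$. I would then set $K'':=Z(\overline J)\cdot K'\subset G/\Gamma$, which is compact as the continuous image of the compact set $Z(\overline J)\times K'$ under the action map $G\times G/\Gamma\to G/\Gamma$. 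Whenever $\valpha\in J$ satisfies $\theta(\valpha,T_n)\Gamma\in K'$, we have $Z(\valpha)\theta(\valpha,T_n)\Gamma\in K''$, and therefore
\begin{align*}
\nu_n^J(K'')\;\geq\;\frac{1}{|J|}\bigl|\{\valpha\in J:\theta(\valpha,T_n)\Gamma\in K'\}\bigr|\;=\;\mu_n^J(K')\;\geq\;1-\epsilon\qquad\text{for every } n.
\end{align*}

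Finally, by compactness of $\mathcal{P}(X^*)$, where $X^*$ denotes the one-point compactification of $X=G/\Gamma$, I would pass to a subsequence of $\{T_n\}$ so that $\nu_n^J\to\nu^J$ in $\mathcal{P}(X^*)$. The uniform tightness estimate above forces $\nu^J(X^*\setminus X)=0$, so $\nu^J$ is a genuine probability measure on $G/\Gamma$ and the convergence takes place in $\mathcal{P}(G/\Gamma)$, as claimed. No substantive obstacle is expected; the only mildly delicate point is the compactness of $Z(\overline J)$, which is secured by the continuity of $Z$ on the open set containing $\overline J$.
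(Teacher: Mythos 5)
Your proposal is correct and takes essentially the same route as the paper: both arguments derive uniform tightness of $\nu_n^J$ from tightness of the untwisted measures $\mu_n^J$ together with compactness of $Z(J)$ (so that $Z(J)\cdot K$ is a compact set carrying mass at least $1-\epsilon$), and then extract a convergent subsequence in $\mathcal{P}(X^*)$ whose limit puts no mass at infinity. The only cosmetic difference is that you rederive the tightness of $\mu_n^J$ directly from Theorem~\ref{Nondivergence}, whereas the paper simply invokes Corollary~\ref{cor nondivergence over subbox}.
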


\begin{proof}
Since the map $Z:J \to G$ is continuous and $J$ is compact, $Z(J)$ is compact. By Corollary \ref{cor nondivergence over subbox}, for any $\epsilon>0$, there is a compact subset $K$ of $G/\Gamma$ such that $\mu^J_n(K)>1-\epsilon$ for all $n$ large enough, where $\mu_n^J$ is defined in (\ref{measure over subbox}). Let $K\prm=Z(J)K$ be a compact set, then by definition of $\nu_n^J$, $\nu_n^J(K\prm)>1-\epsilon$ for all $n$ large enough. Therefore,
after passing to a subsequence of $\{T_n\}_{n\in \mathbb{N}}$, $\nu_{n}^J\to \nu^J$ as $n\to \infty$ where $\nu^J$ is a probability measure on $G/\Gamma$. 
\end{proof}

\begin{lemma}\label{measure is unipotent invariance after twisted }
Fix $\valpha^0\in [0,1]^k\setminus X\prm$ and $J=[\beta_1,\beta_1+\delta_1]\times\cdots\times[\beta_k,\beta_k+\delta_k]\subset [0,1]^k\setminus X\prm$. The measure $\nu^J$, which is the limit measure of $\nu^J_n$ as defined in (\ref{modified measure over subbox}), is invariant under the one-parameter unipotent group $\{\exp (s Y_{\valpha^0}):s\in \bR\}$, where $Y_{\valpha^0}$ is defined in Remark \ref{existence of map Z}.
\end{lemma}
\begin{proof}
Let $f\in C_c(G/\Gamma)$, fix $s\in \bR$, we have the following:
\begin{align*}
    \int_{X}f(\exp (sY_{\valpha^0}) x)d\nu^J(x)&=\lim_{n\to \infty} \frac{1}{|J|}\int_{J}f(\exp (sY_{\valpha^0})Z(\valpha)\theta(\valpha,T_n)\Gamma)d\valpha\\
    &=\lim_{n\to\infty}\frac{1}{|J|}\int_{J}f(Z(\valpha)\theta(\valpha,T_n+s T_n^{-q})\Gamma)d\valpha \numberthis \label{equation 4} 
\end{align*}

Where $q\geq 0$ is in Lemma \ref{local unipotent invariance}. Note that (\ref{equation 4}) follows by continuity of map $Z$, (\ref{equation 2}) and the identity $\exp (s Y_{\valpha^0}) Z(\valpha)=Z(\valpha)\exp( s Y_{\valpha})$. Make change of variables by setting $S_n=T_n+s T_n^{-q}$, and $x_i=\alpha_i S_n^{\lambda_i}$ for $i=1,\cdots,k$, we have $Z(\valpha(\vx))=Z(x_1/S_n^{\lambda_1},\cdots,x_k/S_n^{\lambda_k})$ and (\ref{equation 4}) equals
\begin{align*}
    &\lim_{n\to\infty}\frac{1}{\prod_{i=1}^{k}\delta_i S_n^{\lambda_i}} \int_{\beta_1 S_n^{\lambda_1}}^{(\beta_1+\delta_1)S_n^{\lambda_1}}\cdots\int_{\beta_k S_n^{\lambda_k}}^{(\beta_k+\delta_k)S_n^{\lambda_k}}f(Z(\valpha(\vx))\Theta(\vx)\Gamma)d\vx=\\
    &\lim_{n\to\infty}\frac{1}{\prod_{i=1}^{k}\delta_i T_n^{\lambda_i}} \int_{\beta_1 T_n^{\lambda_1}}^{(\beta_1+\delta_1)T_n^{\lambda_1}}\cdots\int_{\beta_k T_n^{\lambda_k}}^{(\beta_k+\delta_k)T_n^{\lambda_k}}f(Z(\valpha(\vx))\Theta(\vx)\Gamma)d\vx \numberthis \label{equation 5}=\\
    &\lim_{n\to\infty}\frac{1}{\prod_{i=1}^{k}\delta_i T_n^{\lambda_i}} \int_{\beta_1 T_n^{\lambda_1}}^{(\beta_1+\delta_1)T_n^{\lambda_1}}\cdots\int_{\beta_k T_n^{\lambda_k}}^{(\beta_k+\delta_k)T_n^{\lambda_k}}f(Z(\valpha\prm(\vx))\Theta(\vx)\Gamma)d\vx \numberthis \label{equation 6},\\
\end{align*}
where $Z(\valpha\prm(\vx))=Z(x_1/T_n^{\lambda_1},\cdots,x_k/T_n^{\lambda_k})$.

As above, (\ref{equation 5}) and (\ref{equation 6}) follows from the facts that $T_n/S_n \to 1$ as $n\to \infty$, uniform continuity of $f$ and continuity of $Z$. Make change of variables again by setting $\alpha_i=x_i/T_n^{\lambda_i}$ for $i=1,\cdots,k$, (\ref{equation 6}) equals 
\begin{align*}
    \lim_{n\to \infty}\frac{1}{|J|}\int_J f(Z(\valpha)\theta(\valpha,T_n)\Gamma)d\valpha=\int_X f(x)d \nu^J(x),
\end{align*}
which completes the proof.
\end{proof}

\begin{proposition}\label{mu is locally unipotent invariant}
For any nontrivial box $J\subset [0,1]^k$, the probability measure $\mu^J$ on $G/\Gamma$ is locally unipotent invariant.
\end{proposition}

\begin{proof}
As in Definition \ref{definition of locally unipotent invariant measure}, we can take $\theta_n(\cdot)=\theta(\cdot,T_n)$ and $B=J$. We can choose $\mathbb{P}_b$ to be the finite partition of $J$ consisting of nontrivial subboxes with size less than $1/b$. $X\prm$ and the map $Z$ can be taken to be the ones in Lemma \ref{measure is unipotent invariance after twisted }. Condition (1) in Definition \ref{definition of locally unipotent invariant measure} is guaranteed by Lemma \ref{lemma nondivergence of modified measure}. Condition (2) is satisfied by Lemma \ref{measure is unipotent invariance after twisted }.
\end{proof}

\section{Proof of main Theorem}\label{proof of main theorems}

Recall that in Theorem \ref{main theorem}, we assume that $G$ is a real algebraic group, and there are closed subgroups $\Gamma\subset G_1 \subset G$ such that $G_1/\Gamma$ admits a $G_1$-invariant probability measure. Let $\Theta:\bR^k \rightarrow G_1$ be a nontrivial regular algebraic map such that $\Theta(\boldsymbol{0})=e$. Let $F$ be the smallest closed connected subgroup of $G_1$ containing $\Theta(\bR^k)$ such that $F\Gamma/\Gamma$ is closed.

First of all, we make some reductions for the proof. By \cite[Proposition 2.1]{shah1994limit}, $F\Gamma/\Gamma$ admits a F-invariant probability measure. Moreover, by \cite[Note 2.1]{shah1994limit}, replacing $\Gamma$ by $F\cap \Gamma$, $G_1$ by $F$ and $G$ by Zariski closure of $F$, we can assume that there is no proper algebraic subgroup $A$ of $G$ such that $\Theta(\bR^k)\subset A$ and $A\Gamma/\Gamma$ is closed. Also, we can assume that $\Gamma^0$, the connected component of $\Gamma$, is normal in $G$.

By \cite[Note 2.2]{shah1994limit}, let $W$ be the closed subgroup generated by all algebraic unipotent one-parameter subgroups of $G$ contained in $\Gamma^0$, then $W$ is a normal real algebraic subgroup of $G$. Let $q:G\to G/W$ be the natural quotient map, then $G/\Gamma \cong q(G)/q(\Gamma)$ $G$-equivalently. So we can assume that $\Gamma$ contains no nontrivial algebraic unipotent one-parameter subgroups of $G$. Also, we note that if $G$ is a semisimple real algebraic group, so is $G/W$.

Given a sequence $T_n \to\infty$, positive numbers $\lambda_i$ for $i=1,\cdots,k$ and a regular algebraic map $\Theta:\bR^k\to G$. Recall that we define a map $\theta:[0,1]^k \times (0,\infty)\to G$ by $\theta(\valpha,t)=\Theta(\valpha_1 t^{\lambda_1},\cdots,\valpha_k t^{\lambda_k})$. For a subbox $J\subset [0,1]^k\setminus X\prm$ where $X\prm$ is as in lemma \ref{existence of map Z}, and any $n\in\mathbb{N}$, we define probability measure $\nu_n^J$ as in (\ref{modified measure over subbox}).

\begin{theorem}\label{equadistribution over subbox}

Let $G$ be a real algebraic semisimple group. Let $\Gamma\subset G$ be a closed subgroup such that $G/\Gamma$ admits a finite $G$-invariant measure. Let $\Theta:\bR^k \to G$ be a nonconstant regular algebraic map and $\Theta(\bs{0})=e$. Assume that there is no proper closed subgroup $F$ of $G$ containing $\Theta(\mathbb{R}^k)$ such that $F\Gamma/\Gamma$ is closed. Then given any sequence $T_n \to \infty$ as $n\to \infty$ and $\lambda_i>0$ for $i=1,\cdots,k$, for any subbox $J\subset [0,1]^k\setminus X\prm$, the probability measure $\nu^J_n$ defined in (\ref{modified measure over subbox}) converges to the Haar measure $\mu_X$ on $X=G/\Gamma$, as $n\to \infty$.  
\end{theorem}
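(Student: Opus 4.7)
The plan is to argue by contradiction, combining local unipotent invariance (Lemma \ref{measure is unipotent invariance after twisted }), Ratner's measure classification (Theorem \ref{Ratner's Theorem}), and the linearization/twisting machinery (Corollary \ref{modification of linearization}) to show that a failure of $\mu_n^I \to \mu_X$ would force $\Theta(\bR^k)$ into a proper closed algebraic subgroup of $G$, contradicting the standing hypothesis. First, by Corollary \ref{cor nondivergence over subbox} I pass to a subsequence so that $\mu_n^I \to \mu^I$ and assume toward contradiction that $\mu^I \neq \mu_X$. I pick a point $\valpha^0 \in I \setminus \bigcup_{i=1}^r X_i$, with $X_i$ the exceptional subvarieties of Lemma \ref{local unipotent invariance} and Remark \ref{almost all nilpotent elements lie in a single conjugacy class}, and a small $\delta$-box $J \subset I$ containing $\valpha^0$. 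By Remark \ref{existence of map Z} I obtain a continuous twist $Z:J \to G$ whose image lies in a prescribed small neighborhood $\mathcal{U}$ of $e$, and by Lemma \ref{lemma nondivergence of modified measure} I pass to a further subsequence so that $\nu_n^J \to \nu^J$ in $\mathcal{P}(G/\Gamma)$. Lemma \ref{measure is unipotent invariance after twisted } then tells me that $\nu^J$ is invariant under the nontrivial algebraic unipotent one-parameter subgroup $W = \{\exp(sY_{\valpha^0}) : s \in \bR\}$.

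Now I split on Ratner's theorem (Theorem \ref{Ratner's Theorem}): either $\nu^J = \mu_X$, or there exists $H \in \mathcal{H}$ with $H \subsetneq G$ such that $\nu^J(\pi(N(H,W)) \setminus \pi(S(H,W))) > 0$. In the first case, as $J$ shrinks I can force $Z(J)$ into arbitrarily small neighborhoods of $e$, so by uniform continuity of test functions $\mu^J = \mu_X$ as well; covering $I$ (outside a negligible union of proper subvarieties) by such subboxes and averaging yields $\mu^I = \mu_X$, contradicting our assumption. So the singular case must occur. To kill it I choose a compact $C \subset N(H,W) \setminus S(H,W)$ with $\nu^J(\pi(C)) = \delta_0 > 0$ and a compact $K \subset G/\Gamma$ with $\nu^J(K) > 1 - \delta_0/4$ (Theorem \ref{Nondivergence}), and invoke Corollary \ref{modification of linearization} with this $C$, $K$, and $\epsilon = \delta_0/8$, obtaining $\mathcal{U}$, $\tilde K$, $\tilde\Omega$, and auxiliary $H_0 = H, H_1, \dots, H_r$. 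After shrinking $J$ if necessary so that $Z(J) \subset \mathcal{U}$, alternative $(2')$ applied to the box $B_n^J$ with the twist $\vx \mapsto Z(x_1/T_n^{\lambda_1}, \dots, x_k/T_n^{\lambda_k})$ would yield $\nu_n^J(\tilde\Omega \cap \tilde K) \leq \epsilon$, incompatible with $\nu_n^J(\tilde\Omega) \geq \delta_0/2$ and $\nu_n^J(\tilde K) \geq 1 - \delta_0/4$ for large $n$. Hence alternative $(1')$ holds for infinitely many $n$: there are $\gamma_n \in \Gamma$ and $j(n) \in \{0, \dots, r\}$ with $Z(\valpha(\vx))\Theta(\vx)\gamma_n \in \eta_{H_{j(n)}}^{-1}(\mathcal{U} \cdot \Phi_{j(n)})$ for all $\vx \in B_n^J$.

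Applying pigeonhole, I pass to a subsequence on which $j(n)$ is a fixed index, the corresponding subgroup $H'$ is fixed, and the vector $v_n := \gamma_n \cdot p_{H'} \in V_{H'}$ is constant (this uses discreteness of $\eta_{H'}(\Gamma)$ from Theorem \ref{discrete orbit} together with the fact that $v_n$ is trapped in the bounded set $Z(J)^{-1}\mathcal{U}\Phi_{j(n)}$). Writing $v$ for the common value, the relation reads $\Theta(\vx) v \in Z(\valpha(\vx))^{-1}\mathcal{U}\Phi_{j}$ for all $\vx \in B_n^J$, which is a fixed bounded subset of $V_{H'}$ independent of $n$. But $\vx \mapsto \Theta(\vx) v$ is a polynomial map $\bR^k \to V_{H'}$ (since $\Theta$ is regular algebraic), bounded on a sequence of boxes $B_n^J$ whose side lengths $\delta_i T_n^{\lambda_i}$ tend to infinity in every coordinate; a standard comparison of leading monomials (or an appeal to Lemma \ref{polynomial is good} together with Lemma \ref{consequence of good}) forces it to be constant. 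Combined with $\Theta(\bs 0) = e$, this gives $\Theta(\vx)v = v$ for all $\vx \in \bR^k$, so $\Theta(\bR^k) \subset \mathrm{Stab}_G(v) = \gamma N^1(H')\gamma^{-1}$. Since $\gamma \in \Gamma$, this stabilizer is an algebraic subgroup of $G$ whose orbit in $G/\Gamma$ is closed (Theorem \ref{discrete orbit}(2)), and, after taking $H'$ of minimal dimension witnessing the Ratner dichotomy so as to sidestep the degenerate case when $H'$ is normal and $N^1(H') = G$, it is proper in $G$. This contradicts the standing hypothesis and completes the proof.

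The main obstacle I anticipate is the delicate bookkeeping between three nested scales: the parameter box $J \subset [0,1]^k$ on which $Z$ and the nilpotent element $Y_{\valpha^0}$ are defined, the physical expanding box $B_n^J \subset \bR^k$ on which $\Theta$ is evaluated, and the auxiliary objects $C, \tilde K, \tilde\Omega, \mathcal{U}, \Phi_j$ produced by linearization. One must arrange, simultaneously: (i) $Z(J) \subset \mathcal{U}$ so that the twisting corollary applies; (ii) $\nu^J(\pi(C))$ is bounded below so that $(2')$ leads to a contradiction; and (iii) the Besicovitch constant and $(C,\alpha)$-good parameters feeding into Corollary \ref{modification of linearization} are uniform in these choices. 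A second subtle point is the disposal of the case where the witness $H'$ is normal in $G$ (so that $N^1(H') = G$); this is handled by choosing $H'$ of minimal dimension with the Ratner property and appealing to the normal-subgroup clause of Theorem \ref{Ratner's Theorem} to absorb additional invariance into $W$ until the linearization can be rerun with a strictly smaller $H'$.
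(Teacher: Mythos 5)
There is a genuine gap, and it sits exactly where you flag it yourself: the case in which the subgroup furnished by Ratner plus linearization is normal in $G$. When alternative $(1\prm)$ holds and the polynomial rigidity argument gives $\Theta(\bR^k)\subset\gamma_0 N^1(H\prm)\gamma_0^{-1}$, the hypothesis ``no proper closed subgroup contains $\Theta(\bR^k)$'' forces $N^1(H\prm)=G$, i.e.\ $H\prm$ is \emph{normal} in $G$ --- and at that point there is no contradiction at all, because $\gamma_0 N^1(H\prm)\gamma_0^{-1}=G$ is not proper. This is not a degenerate case to be sidestepped; in the paper it is the \emph{expected} outcome of the linearization step (the paper's Step 2 concludes precisely that $H_{r_0}=H$ is normal and $\nu^J$ is $H$-invariant). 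Your proposed fix --- take $H\prm$ of minimal dimension, absorb the new invariance into $W$, and ``rerun the linearization with a strictly smaller $H\prm$'' --- does not work: enlarging $W$ only enlarges the subgroups that can appear, and the iteration terminates with $\nu^J$ invariant under a proper normal subgroup $H$ of the semisimple group $G$, with almost every ergodic component a translate of $\mu_H$. Such a measure need not be $\mu_X$ (its projection to $G/H\Gamma$ is a priori arbitrary), so no contradiction is available by this route. The paper resolves this by two steps you omit entirely: (i) transferring the $H$-invariance from the twisted measure $\nu^J$ back to the untwisted limit $\mu^J$, which crucially uses normality of $H$ through the identity $h_0Z(\valpha^i)^{-1}=Z(\valpha^i)^{-1}h_i$ (Step 3); and (ii) an induction on $\dim(G/\Gamma)$: pushing forward to $G/H\Gamma$, applying the theorem there to conclude $q_*(\mu^J)$ is $G$-invariant, and combining with $H$-invariance along the fibers to get $\mu^J=\mu_X$ (Step 4). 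Without the induction and the twisted-to-untwisted transfer, your argument cannot close.

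A secondary, repairable issue: in the branch ``$\nu^J=\mu_X$'' you deduce $\mu^I=\mu_X$ by shrinking boxes and invoking uniform continuity of test functions, but the quantifiers must be arranged the other way around --- fix one test function $f$ with $\int f\,d\mu_X=0$ and $|\int f\,d\mu^I_n|>\epsilon$ \emph{first}, excise a small neighborhood of the exceptional subvarieties, and build the finite cover with twists $Z_i$ within the modulus of continuity of that fixed $f$; this is exactly how the paper's Step 1 locates a single subbox $J$ with $\nu^J\neq\mu_X$ quantitatively. As written, your per-box conclusion ``$\mu^J=\mu_X$'' only holds approximately, with an error depending on the size of $Z(J)$, and the covering argument needs the fixed-$f$ bookkeeping to go through.
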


\begin{proof}
By the reduction at the beginning of this section, we can assume that the connected component $\Gamma^0$ of $\Gamma$ is normal in $G$ and $\Gamma$ contains no nontrivial one-parameter unipotent subgroups.

We proceed by induction on $\dim(G/\Gamma)$. If $\dim(G/\Gamma)=0$, then the theorem is trivial. Assume that the theorem has been proved for any closed subgroup $\Lambda\subset G$ such that $\Lambda^0$ is normal in $G$ and $G/\Lambda$ has a $G$-invariant probability measure with $dim(G/\Lambda)\leq m-1$. Now we want to prove the theorem when $dim(G/\Gamma)=m$.

By Lemma \ref{lemma nondivergence of modified measure} and Lemma \ref{measure is unipotent invariance after twisted }, after passing to a subsequence, $\nu^J_n$ converges to $\nu^J$, which is a probability measure on $G/\Gamma$ invariant under a nontrivial unipotent subgroup. Let $W$ be the closed connected subgroup generated by those unipotent subgroups preserving $\nu^J$, then $W$ is nontrivial. By Theorem \ref{Ratner's Theorem}, there exists $H\in \mathcal{H}$ such that
\begin{align}\label{smallest tube has positive measure}
    \nu^{J}(\pi(N(H,W)))>0 \text{ and  } \nu^J(\pi(S(H,W)))=0.
\end{align}

Let $C$ be a compact subset of $N(H,W)\setminus S(H,W)$ such that there exists $\epsilon>0$ with $\nu^{J}(\pi(C))>\epsilon$. Let $K$ be a compact neighborhood of $\pi(C)$ in $X$, then by Corollary \ref{modification of linearization}, let $H_0=H$, we can find $H_1,\cdots,H_r \in \mathcal{H}$ with $H_i\subset H$, $\dim H_i <\dim H$ and compact sets $D_i\subset A_{H_i}$ for $i=0,\cdots, r$ such that the following holds: For any neighborhood $\Phi_i$ of $D_i$ in $V_{H_i}$ for $i=0,\cdots,r$, we can find a small compact neighborhood $\mathcal{U}$ of $e$ in $G$ (depends only on $C$ and $K$), a neighborhood $\tilde{\Omega}$ of $\pi(C)$ and a compact neighborhood $C\subset \tilde{K}\subset K$ such that for any box $\bx\subset [0,1]^k$, any continuous map $Z\prm:\bx \rightarrow \mathcal{U}$, any $\theta \in \classP$ and any $g\in G$, either

$(1\prm)$ $\exists \gamma \in \Gamma$, such that $Z\prm(\bx)\theta(\bx)g\gamma$ is contained in one of the components of $\eta^{-1}_{H_0}(\mathcal{U}\cdot\Phi_0)\cup \cdots \cup \eta^{-1}_{H_r}(\mathcal{U}\cdot\Phi_r)$, or

($2\prm$) $|\{\valpha \in \bx: Z\prm(\valpha)\theta(\valpha)g\Gamma\in \tilde{\Omega}\cap \tilde{K}\}|\leq \epsilon\prm |\bx|$.

To apply the above dichotomy, let $\mathbb{P}_b$ be an admissible finite partition of $J$ into nontrivial boxes $J_a^b$ for $1\leq a\leq b$, each with size less than $1/b$. By continuity of $Z$, if $b\in \mathbb{N}$ is large enough, then for any two points $\valpha, \valpha\prm$ in any one of such boxes, we have $Z(\valpha)Z(\valpha\prm)^{-1}\in \mathcal{U}$. Since $\nu^J(\pi(C))>\epsilon$, after passing to a subsequence of $\{T_n\}_{n\in\mathbb{N}}$, we can find a box $J_a^b=J_1$ in the partition such that for all $n$ large enough,
\begin{align}\label{(1) doesn't hold}
    \frac{1}{|J_1|}\left|\{\valpha \in J_1: Z(\valpha)\theta(\valpha,T_n)\Gamma\in \tilde{\Omega}\cap K\}\right|>\epsilon.
\end{align}
Now fix a point $\valpha^0 \in J_1$. For any $\valpha\in J_1$, we can write
\begin{align*}
    Z(\valpha)\theta(\valpha,T_n)=Z(\valpha)Z(\valpha^0)^{-1}\cdot Z(\valpha^0)\theta(\valpha,T_n).
\end{align*}

Note that by our construction, $Z(\valpha)Z(\valpha^0)^{-1}\in \mathcal{U}$. Also, since $Z(\valpha^0)$ is constant and $\theta(\valpha,T_n)\in \classP$ is a polynomial in $\valpha$ for all $n$, we have $Z(\valpha^0)\theta(\valpha,T_n)\in \classP$. For each $n$, we apply the dichotomy to  $Z(\valpha)Z(\valpha^0)^{-1}$, $Z(\valpha^0)\theta(\valpha,T_n)$, $J_1$ and $e$ in place of $Z\prm(\valpha)$, $\theta(\valpha)$, $\bx$ and $g$.

By the above discussion and (\ref{(1) doesn't hold}), condition $(1\prm)$ always holds for all $n$ large enough. After passing to a subsequence of $\{T_n\}_{n\in\mathbb{N}}$, there exists $0\leq r_0\leq r$ such that for any $n\in \mathbb{N}$, there exists $\gamma_n\in \Gamma$ such that $Z(\valpha)\theta(\valpha,T_n)\gamma_n\in \eta_{H_{r_0}}^{-1}(\mathcal{U}\cdot \Phi_{r_0})$ for all $\valpha\in J_1$. Since $\mathcal{U}\cdot \Phi_{r_0}$ is bounded, there is some $R>0$ such that for all $n$,
\begin{align*}
    \|Z(\valpha)\theta(\valpha,T_n)\gamma_n p_{H_{r_0}}\|<R, \forall \valpha\in J_1,
\end{align*}
where $\|\cdot\|$ is some fixed norm on the vector space. By compactness of $J_1$ and continuity of the map $Z$, we have for all $n$,
\begin{align}\label{sup on some subbox}
    \|\theta(\valpha,T_n)\gamma_n p_{H_{r_0}}\|<R\prm, \forall \valpha\in J_1,
\end{align}
for some $R\prm \geq R$.

For each fixed $n$, $\|\theta(\valpha,T_n)\gamma_n p_{H_{r_0}}\|$ can be identified as a polynomial in $\valpha$ of degree less than $l$ with coefficients being functions of $T_n$. Then by Lemma \ref{consequence of good} and (\ref{sup on some subbox}), we can find a finite $R^{\prime\prime}>0$ depending on $R\prm$ and $J_1$ such that for all $n\in \mathbb{N}$,
\begin{align*}
    \|\theta(\valpha,T_n)\gamma_n p_{H_{r_0}}\|<R^{\prime\prime}, \forall \valpha\in [0,1]^k.
\end{align*}
Take $\valpha=\boldsymbol{0}$, we have $\|\gamma_n p_{H_{r_0}}\|<R^{\prime\prime}$.

By discreteness of $\Gamma\cdot p_{H_{r_0}}$ (see Theorem \ref{discrete orbit}), the number of $\gamma\in \Gamma$ such that $\|\gamma p_{H_{r_0}}\|< R^{\prime\prime}$ is finite. After passing to a subsequence of $\{T_n\}_{n\in \mathbb{N}}$, we can find $\gamma_0\in \Gamma$ such that for all $n$,
\begin{align*}
    \|\theta(\valpha,T_n)\gamma_0 p_{H_{r_0}}\|<R^{\prime\prime},\forall \valpha \in [0,1]^k.
\end{align*}
By definition of $\theta(\valpha,T_n)$, this says that
\begin{align}\label{bounded image}
    \|\Theta(\bR^k)\gamma_0 p_{H_{r_0}}\|< R^{\prime\prime}.
\end{align}

Since the map $\vx\mapsto \|\Theta(\vx)\gamma_0 p_{H_{r_0}}\|^2$ is a polynomial, $\|\Theta(\vx)\gamma_0 p_{H_{r_0}}\|^2$ is either constant or unbounded as $\vx\to \infty$. Hence by (\ref{bounded image}), we have 
$\Theta(\bR^k)\gamma_0 p_{H_{r_0}}=\gamma_0 p_{H_{r_0}}$.

Therefore, $\Theta(\bR^k)\subset \gamma_0 N^1(H_{r_0})\gamma_0^{-1}$ and $\Theta(\bR^k)\Gamma/\Gamma\subset \gamma_0 N^1(H_{r_0})\Gamma/\Gamma$. Since $N^1(H_{r_0})\Gamma/\Gamma$ is closed by Theorem \ref{discrete orbit}, $\Gamma^0\subset N^1(H_{r_0})$ and $N^1(H_{r_0})$ is algebraic, by the assumption in the theorem, we have $N^1(H_{r_0})=G$. Thus $H_{r_0}$ is normal in $G$. Recall that in Corollary \ref{modification of linearization}, $H_{r_0}$ is chosen such that $N(H_{r_0},W)\neq \emptyset$. Now since $H_{r_0}$ is normal, $N(H_{r_0},W)=G$.  We conclude that $H_{r_0}=H$ by (\ref{smallest tube has positive measure}) and the fact that $N(H_i,W)\subset S(H,W)$ for $1\leq i\leq r$. Therefore $H$ is normal in $G$ and $\nu^J$ is $H$-invariant by (\ref{smallest tube has positive measure}) and Theorem \ref{Ratner's Theorem}.

Since $W\subset H$, $H$ contains a nontrivial unipotent subgroup. Let $\Lambda=H\Gamma$. By the reduction made at the beginning of this section, $\Gamma^0$ contains no nontrivial one-parameter unipotent subgroup, so $\dim\Lambda^0=\dim H\Gamma^0>\dim \Gamma^0$ and $\dim (G/\Gamma)>\dim(G/H\Gamma)$. Note that $G$ is the smallest subgroup $F$ of $G$ such that $\Theta(\bR^k)\subset F$ and $F\Lambda/\Lambda$ is closed. Since $H$ is normal, $\Lambda^0=H\Gamma^0$ is normal in $G$. Let $q:G/\Gamma\to G/\Lambda$ be the natural quotient map. Then by induction hypothesis, $q_*(\nu^J)$ is $G$-invariant. Since the fibers of $q$ are closed $H$-orbits and $\nu^J$ is $H$-invariant, $\nu^J$ is also $G$-invariant by \cite[Proposition 1.6]{dani1978invariant} and we finish the proof.
\end{proof}

\begin{proof}[proof of Theorem \ref{main theorem}]
Denote $B=[0,1]^k$, we need to show that $\mu^B$ is $G$-invariant.
By Theorem \ref{equadistribution over subbox}, $\nu^J$ is $G$-invariant for any box $J\subset [0,1]^k\setminus X\prm$. Since $\mu^B$ is locally unipotent invariant by Proposition \ref{mu is locally unipotent invariant}, by Lemma \ref{lemma: relation between original measure and perturbed measure} we conclude that $\mu^B$ is $G$-invariant.

\end{proof}

\section{Locally Unipotent invariance of the limit measure}\label{Section Invariance under a unipotent flow}\label{Invariance under a unipotent flow}

\begin{proof}[proof of Lemma \ref{local unipotent invariance}]
Let $M(N,\bR)$ denote the affine space of $N\times N$ real matrices which contains $G$ as an affine subvariety. There exist polynomials $\theta_{ij}(\valpha,t)$ in variable $\valpha$ for $i,j=1,\cdots,N$ such that $\theta(\valpha,t)=(\theta_{ij}(\valpha,t))_{N\times N}$. Since $\Theta$ is nonconstant, the set of $\valpha \in [0,1]^k$ where $\theta(\valpha,t)$ as a function of $t$ is constant is a proper subvariety in $[0,1]^k$. We treat $\valpha$ in $\theta(\valpha,t)$ as an indeterminant variable for now.

First we find $q>0$ as in the lemma by induction. To simplify our proof, without loss of generality, we rescale the parameter $T_n$ to ensure that $\lambda_i>1$ for any $1\leq i \leq k$ and that at least one power of $t$ in some matrix coordinate of $\theta(\valpha,t)$ is not an integer. Let $\theta^{(l)}(\valpha,t)$ denote the $l$-th matrix coordinatewise derivative of the matrix $\theta(\valpha,t)$ with respect to $t$. For a matrix $(M_{ij}(t))_{N\times N}\in M(N,\bR)$ each of whose matrix coordinate is a linear combination of real powers of $t$, let $\deg(M(t))=\max_{1\leq i,j\leq N}(\deg(M_{ij}(t)))$ (for example, $\deg(t^{p_1}+2t^{p_2})=p_1$, for real numbers $p_1>p_2$). By convention, we take the degree of zero constant function to be $-\infty$. Put
\begin{align*}
    d_1:=\min\{l\in \mathbb{N}: -\infty<\deg\theta^{(l+1)}(\valpha,t)<0\}.
\end{align*}

Note that $d_1<\infty$ by the above assumption. Since the map $G\ni g\mapsto g^{-1}$ is an algebraic morphism on $G$, the map $\bR^k \ni \vx \mapsto \Theta(\vx)^{-1}$ is also regular algebraic. Therefore, $\deg\theta(\valpha,t)^{-1}\geq 0$. Let 
\begin{align*}
    q_1:=\max_{1\leq l \leq d_1}\frac{1}{l}\deg(\theta^{(l)}(\valpha,t)\theta(\valpha,t)^{-1}).
\end{align*}

Note that $q_1\geq 0$ since $\deg(\theta^{(1)}(\valpha,t)\theta(\valpha,t)^{-1})\geq 0$ by assumption on $\lambda_1,\cdots,\lambda_k$. Indeed, $\theta^{(1)}(\valpha,t)\theta(\valpha,t)^{-1}$ has no negative power term of $t$ since $\lambda_i>1$ for all $i$.

Now we argue that $q_1$ must be positive. If not, then $q_1=0$, and
\begin{align*}
    \deg(\theta^{(1)}(\valpha,t)\theta(\valpha,t)^{-1})=0.
\end{align*}
So $\theta^{(1)}(\valpha,t)\theta(\valpha,t)^{-1}=A(\valpha)$ for some matrix $A(\valpha)$ only depends on $\valpha$, this is because there is no negative power term of $t$ and therefore $\theta^{(1)}(\valpha,t)\theta(\valpha,t)^{-1}$ must be a constant matrix of $t$. We can write
\begin{align}\label{contradiction 1}
    \theta^{(1)}(\valpha,t)=A(\valpha)\cdot\theta(\valpha,t).
\end{align}

Taking derivative on both sides of (\ref{contradiction 1}) for $t$, we have 
\begin{align}\label{contradiction 2}
    \forall l\geq 1, \theta^{(l)}(\valpha,t)=A(\valpha)^{l}\cdot \theta(\valpha,t).
\end{align}

Note that $-\infty<\deg(\theta^{(d_1+1)}(\valpha,t))<0$ since some power of $t$ in $\theta(\valpha,t)$ is not an integer. But $\deg(A(\valpha)^{d_1+1}\cdot \theta(\valpha,t))\geq 0$ or equals $-\infty$ (which happens when it is a zero matrix). Thus, (\ref{contradiction 2}) is a contradiction when $l=d_1$. Therefore $q_1>0$.

Now for $i\geq 1$, if there exists some constant $\xi$ with $|\xi|<1$ and $\deg(\theta^{(d_i+1)}(\valpha,t+\xi)\theta(\valpha,t)^{-1}t^{-q_i(d_i+1)})\geq 0$, we set 
\begin{align*}
    d_{i+1}:=d_i+1 \text{ and } q_{i+1}:=\max_{1\leq l\leq d_{i+1}}\frac{1}{l}\deg(\theta^{(l)}(\valpha,t)\theta(\valpha,t)^{-1}).
\end{align*}

This process must end in finitely many steps because $\deg(\theta^{(d_i+1)}(\valpha,t)\theta(\valpha,t)^{-1})$ is bounded above but $\deg(t^{-q_i(d_i+1)})=-q_i(d_i+1)$ is decreasing when $d_i$ increases, as $q_i\geq q_1>0$. 

Let $j$ be the smallest integer such that  $\deg(\theta^{(d_j+1)}(\valpha,t+\xi)\theta(\valpha,t)^{-1}t^{-q_j(d_j+1)})< 0$ for any constant $|\xi|<1$. Let $q=q_j$ and $d=d_j$. Now fix $s\in \bR$, for each matrix coordinate, using Taylor expansion for $t$, we have 
\begin{align}\label{taylor expansion 1}
    \theta(\valpha,t+st^{-q})=\theta(\valpha,t)+\sum_{l=1}^d \theta^{(l)}(\valpha,t)t^{-ql}\frac{s^l}{l!}+\theta^{(d+1)}(\valpha,c_t)t^{-q(d+1)}\frac{s^{d+1}}{(d+1)!}.
\end{align}

Where $c_t$ is a number between $t$ and $t+st^{-q}$ (indeed $c_t$ might be different for each matrix coordinate, to simplify our notation, we simply denote them by $c_t$). In particular, as $q>0$, $|c_t-t|<1$ as $t$ large enough. Multiply both sides of (\ref{taylor expansion 1}) on the right by $\theta(\valpha,t)^{-1}$, we have 
\begin{align*}
     \theta(\valpha,t+st^{-q})\theta(\valpha,t)^{-1}=&Id+\sum_{l=1}^d \theta^{(l)}(\valpha,t)\theta(\valpha,t)^{-1}t^{-ql}\frac{s^l}{l!}\\
     &+\theta^{(d+1)}(\valpha,c_t)\theta(\valpha,t)^{-1}t^{-q(d+1)}\frac{s^{d+1}}{(d+1)!}.
\end{align*}
By the choice of $q$, we have for all $1\leq l\leq d$,
\begin{align}\label{equation 1}
    \lim_{t\to \infty}\theta^{(l)}(\valpha,t)\theta(\valpha,t)^{-1}t^{-ql}=M_l(\valpha)\in M(n,\bR).
\end{align}

Note that each matrix coordinate of $M_l(\valpha)$ is a polynomial of $\valpha$. By the choice of $q$, the set of $\valpha$ where $(M_1(\valpha),\cdots,M_d(\valpha))=(0,\cdots,0)$ is a finite union of proper subvarieties of $\bR^k$. Let $X_1,\cdots, X_m$ be the proper subvarieties in $\bR^k$ such that $(M_1(\valpha),\cdots,M_d(\valpha))=(0,\cdots,0)$ if $\valpha\in \bigcup_{i=1}^m X_i$.

By the above construction, for any constant $\xi$ with $|\xi|<1$, we have $\deg(\theta^{(d+1)}(\valpha,t+\xi)\theta(\valpha,t)^{-1} t^{-q(d+1)})<0$. Take $\xi=c_t-t$. Note that $|c_t-t|<1$ as $t$ large enough. Thus,
\begin{align}\label{equation 1prm}
     \lim_{t\to \infty}\theta^{(d+1)}(\valpha,c_t)\theta(\valpha,t)^{-1}t^{-q(d+1)}=0. .
\end{align}

We put 
\begin{align}\label{equation 0}
    \rho_{\valpha}(s)=Id+\sum_{l=1}^d M_l(\valpha)\frac{s^l}{l!}
\end{align}
for all $s\in \bR$. Note that $\rho_{\valpha}(s)$ is nonconstant for $\valpha\in [0,1]^k\setminus\bigcup_{i=1}^m X_i$.

Now we prove that $\rho_{\valpha}$ is a nontrivial algebraic one-parameter unipotent subgroup. For any $s\in \bR$ and any map $t\mapsto s_t$ with $s_t\to s$ as $t\to \infty$, we have 
\begin{align}\label{equation 2}
    \lim_{t\to \infty}\theta(\valpha,t+s_t t^{-q})\theta(\valpha,t)^{-1}=\rho_{\valpha}(s).
\end{align}
Note that the above convergence is uniform for all $\valpha$ in any compact subset of $[0,1]^k\setminus \bigcup_{i=1}^m X_i$. Now for $s_1,s_2\in \bR$, we have 
\begin{align*}
    \rho_{\valpha}(s_1+s_2)\rho_{\valpha}(s_2)^{-1}&=\lim_{t\to \infty}\theta(\valpha,t+(s_1+s_2)t^{-q})\theta(\valpha,t)^{-1}\theta(\valpha,t)\theta(\valpha,t+s_2t^{-q})^{-1}\\
    &=\lim_{t\to \infty}\theta(\valpha,y_t+s_t y_t^{-q})\theta(\valpha,y_t)^{-1}\\
    &\text{where $y_t=t+s_2t^{-q}$ and $s_t=s_1(y_t/t)^q$}\\
    &=\rho_{\valpha}(s_1). \numberthis   \label{equation 3}
\end{align*}

By equations (\ref{equation 1}), (\ref{equation 2}) and (\ref{equation 3}), for $\valpha \in [0,1]^k\setminus \bigcup_{i=1}^m X_i$, $\rho_{\valpha}:\bR\to G$ is a nontrivial algebraic group homomorphism. Therefore $\rho_{\valpha}$ is a nontrivial algebraic unipotent one-parameter subgroup of $G$. This completes the proof.
\end{proof}

\section*{Acknowledgments} I would like to thank my thesis advisor, professor Nimish Shah, for suggesting this problem to me, for constant encouragements and generally sharing his ideas. 


\bibliographystyle{plain}
\bibliography{references.bib}

\medskip

\medskip

\end{document}